\newcommand{\po}{\left(}
\newcommand{\pf}{\right)}
\newcommand{\E}{\mathbb E}
\newcommand{\R}{\mathbb R}
\newcommand{\N}{\mathbb N}
\newcommand{\dd}{\mathrm{d}}
\newcommand{\ee}{\mathrm{e}}
\newcommand{\na}{\nabla}
\newcommand{\X}{\mathbf{X}}
\newcommand{\pa}[1]{\left( #1 \right)}
\newcommand{\br}[1]{\left[ #1 \right]}
\newcommand{\abs}[1]{\left \vert #1 \right \vert }
\newcommand{\norm}[1]{\left \Vert #1 \right \Vert }
\newcommand{\x}{\mathbf x}
\newcommand{\calN}{\mathcal N}
\newcommand{\one}{\mathbf{1}}
\newcommand{\ph}{\varphi}
\newcommand{\Ha}{ {\mathcal H }}
\newtheorem{AssD}{Assumption}
\newtheorem{AssU}{Assumption}
\newcommand{\add}[1]{{\color{black}#1}}
\newcommand{\lj}[1]{{\color{black}#1}}
\newtheorem{thm}{Theorem}[section]
\newtheorem{lem}[thm]{Lemma}
\newtheorem{defi}[thm]{Definition}
\newtheorem{prop}[thm]{Proposition}
\newtheorem{cor}[thm]{Corollary}
\newtheorem{rem}[thm]{Remark}
\newtheorem{exa}[thm]{Example}
\title{The particle approximation of quasi-stationary distributions: concentration bounds in the uniform case.}
\author[1]{Lucas Journel\thanks{lucas.journel@unine.ch}}
\author[2]{Mathias Rousset\thanks{mathias.rousset@inria.fr}}
\affil[1]{Universit\'e de Neuch\^atel, Suisse.}
\affil[2]{IRMAR and Inria, University of Rennes, France.}
\date{ }
\begin{document}

\maketitle

\begin{abstract}
We study mean-field particle approximations of normalized Feynman-Kac semi-groups, usually called Fleming-Viot or Feynman-Kac particle systems. Assuming various large time stability properties of the semi-group uniformly in the initial condition, we provide explicit time-uniform $L^p$ and exponential bounds (a new result) with the expected rate in terms of sample size. This work is based on a stochastic backward error analysis (similar to the classical concept of numerical analysis) of the measure-valued Markov particle estimator, an approach that simplifies methods previously used for time-uniform $L^p$ estimates.
\end{abstract}

\section{Introduction}
Let $(X_t)_{t \geqslant 0}$ denotes a time-homogenous Markov process in some state space $E$, and $V: E \to \R $ a bounded measurable 'potential' function. Define the Feynman-Kac semi-group by
\[
Q^V_{t}(f)(x) := \E\po f(X_t)\ee^{-\int_0^t V(X_s) \dd s} \mid X_0=x \pf,
\]
and denote the associated normalized Feynman-Kac semi-group by
\[
\eta_t(f) := \frac{\int_E Q^V_t(f) \dd \eta_0}{ \int_E Q^V_t(\one) \dd \eta_0} .
\]
\add{Since $V$ is bounded, up to adding a constant}, one can assume $V\geqslant 0$ and $\eta_t$ can then be interpreted for each $t\geqslant 0$ as the distribution of the process $X_t$ with initial distribution $\eta_0$ conditioned by the survival event $\tau > t$, where $\tau$ is a random (stopping) time defining a killing event with killing rate per unit time given by $V(X_t)$.

We are interested in mean-field particle approximations of the flow $t \mapsto \eta_t$ (and in particular of its large time limit), introduced in this context in~\cite{BurHolMar00}. Those mean-field particle systems are sometimes called Fleming-Viot particle systems, or Feynman-Kac particle systems; and the importance of treating them as mean-field, as is done here, is largely due to work of P.~Del~Moral (see e.g. \cite{DelMic00} or \cite{del2004feynman,del2011concentration,del2012concentration} for the discrete time case).

Feynman-Kac particle systems associated with the pair $(X,V)$ are Markov processes of the form
\[
t \mapsto \X_t =(X^1_t, \ldots , X^N_t) \in E^N
\]
where $N$ denotes the number of particles. When $V \geqslant 0$, the specific variant usually called the Fleming-Viot process can be defined informally as follows:
\begin{itemize}
\item each particle evolves independently according to the distribution of $X$,
\item each particle is killed between time $t$ and $t+\dd t$ with probability $V(X_t)\dd t$,
\item when a particle is killed, it is immediately replaced by a clone of a uniformly chosen particle in the full sample of size $N$ (including the killed particle).
\end{itemize}
Other closely related variants of mean-field approximations can be defined, see their definition in Section~\ref{s-sec:math-set} below.

The estimator of the normalized Feynman-Kac semi-group is then the empirical measure of the particle system, denoted by
\begin{equation}\label{eq:emp-meas-part}
\eta^N_t(dx) :=\frac{1}{N}\sum_{j=1}^{N}\delta_{X^{j}(t)}(dx).
\end{equation}

The goal of this work is to prove, uniformly with respect to time, bias, $L^p$ and exponential concentration bounds \add{(sharp with respect to $N$)} on the difference $ \eta^N_t(f) - \eta_t(f)$, for any $f$ in an appropriate separating class of test functions. Such results are sometimes referred to as 'one-body' or 'weak' propagation of chaos. Results are obtained under a set of assumptions (see Assumption~\ref{assu:general} for details) that we will call here \emph{the uniform case}. The name comes from the various estimates on the large time convergence of the Feynman-Kac semi-group, which are required to hold \emph{uniformly in the initial condition}. One of the main feature of this set of assumptions for diffusions is the following. \add{Typically, first, we assume that there exists a unique Quasi-Stationary Distribution (QSD) $\eta_\infty$, and a unique bounded positive eigen-function $h: E \to \mathbb R_+$ with eigenvalue $\lambda \in \mathbb R$} defined by the eigen-problems
\[
\eta_\infty Q^V_t = \ee^{-\lambda t} \eta_\infty, \quad  Q^V_t(h)=\ee^{-\lambda t} h.
\]
Second, Assumption~\ref{assu:general} also requires the boundedness of $\log h$ and its gradient on the whole of $E$, which is quite restrictive. \add{Examples of diffusions satifying Assumption~\ref{assu:general} includes uniformly elliptic diffusions on a compact manifold with $V$ simply bounded. In the case of Euclidean diffusions with additive noise (see Section~\ref{sec:exemple} for examples of application), the uniform case of Assumption~\ref{assu:general} will hold true \textit{e.g.} for sufficiently confining drifts at infinity (at least super-linearly) and any killing potentials that can be written in the form $V = h^{-1} L(h) + cte$ with $\log h$ and its gradient bounded.}

Generalizations of the present \add{sharp estimates (especially the concentration estimates}) without this type of restriction remains an open problem, to the best of our knowledge.

More precisely, we obtain \add{as a preliminary} (Theorem~\ref{thm:var-lower-bound-h}) for i.i.d. initial conditions and each $N \geqslant 1$ 
\begin{equation*}
    \sup_{t\geqslant 0} \abs{\mathbb E \pa{\eta^N_t(f) - \eta_t(f)} } \leqslant \frac{C}{N} \mathcal N \pa{f},
\end{equation*}
as well as for $p\geqslant 2$
\begin{equation*}
    \sup_{t\geqslant 0} \pa{ \mathbb E \abs{\eta^N_t(f) - \eta_t(f)}^p }^{1/p}  \leqslant \frac{C}{\sqrt{N}} \mathcal N \pa{f},
\end{equation*}
in which $C$ is a constant and $\mathcal N$ a norm on test functions used in the uniform estimates of Assumption~\ref{assu:general}. The case of diffusions treated in this work requires \[\mathcal N \pa{f} = \norm{f}_\infty + \norm{ \nabla f }_\infty. \]

Theorem~\ref{thm:var-lower-bound-h} is not new as it was already proven (with few technical variants in the assumption) in \cite{Rousset06} (see aslo \cite{angeli2021limit,cloez2022uniform} which are based on the same ideas). However, the present work considerably simplify the technical tricks of the proof in~\cite{Rousset06}. Indeed, in the latter, a \emph{linearized} backward error analysis of the martingale 
\[
t \mapsto \int_E Q^{V- \lambda}_{T-t}(f) \dd \eta^N_t 
\]
is performed (for $t \leqslant T$, and $\lambda$ the principal eigenvalue of the problem). This approach is simpler at first, but it leads to spurious terms that are delicate to analyze uniformly in time. A more straightforward and conceptually cleaner approach, which is the one adopted here, consists in considering the exact backward error process defined by 
\[
t \mapsto \ph_t(\eta_{t}^N) := \frac{\int_E Q^{V}_{T-t}(f) \dd \eta^N_t}{\int_E Q^{V}_{T-t}(\one) \dd \eta^N_t} .
\]
The above backward error process (a terminology borrowed from numerical analysis) is obtained by considering the evolution of the particle system on $[0,t]$ followed by the exact $N=+\infty$ mean-field probability-valued evolution equation on $[t,T]$. Stochastic analysis of the variations when $t$ varies from $0$ to $T$ is then performed. Similar approaches, sometimes very involved, have already been carried out for other mean-field particle systems (see \textit{e.g.} \cite{mischler2013kac} which uses among many other tools this type of backward error decomposition to achieve parts of Kac's program in kinetic theory. \add{Note that in the present context a simple notion of differentiability of the non-linear semi-group $\eta \to \eta Q^V_t(f)/\eta Q^V_t(\one) $ is available thanks to its explicit nature, which greatly simplifies the approach.} See also \cite{de2021backward} for McKean diffusions in finite time).

Using exponentiation of the backward error process and a similar analysis, and under the same assumptions, we obtain 'exponential versions' of one-body propagation of chaos. This leads to exponential concentration bound, which reads for i.i.d. initial conditions, any $N \geqslant 1$ and any $u \in \R_+$:
\begin{equation*}
    \sup_{t\geqslant 0} \mathbb P \po  \abs{\eta^N_t(f) - \eta_t(f) }  \geqslant u \pf  \leqslant c_0 \exp \pa {-c  N \frac{u^2}{1+u}} .
\end{equation*}
for test functions satisfying $\mathcal N (f) \leqslant 1$. In the above $c_0 \leqslant 5$ is explicit and the small constant $c$ only depends on the constants of Assumption~\ref{assu:general}. This concentration estimate is the main new result of the present paper.

For older references studying the large time behavior of this type of particle systems in continuous time \add{with sharp estimates in $N$}, beyond $L^p$ estimates as in \cite{Rousset06,angeli2021limit,cloez2022uniform}, one can see~\cite{journel2022convergence} \add{in which sharp estimates in Wasserstein distance are obtained by an original coupling method, albeit with a quantitative restrictive assumption on the noise intensity. The concentration estimate in continuous time of this work is however new.} In discrete time, this program of using backward error semi-group analysis to study propagation of chaos for Feynman-Kac mean-field particle models has been exhaustively developed by Pierre Del~Moral and his co-authors (see \textit{e.g.} \cite{del2004feynman,del2011concentration,del2012concentration}). The most relevant result with respect to the present work is arguably~\cite{del2011concentration} in which time uniform and asymptotically precise concentration bounds are obtained for the particle approximation of quite general mean-field models. However, in this reference, the discrete nature of time together with Dobrushin-like mixing of the underlying Markov chain are essential in the error analysis and one cannot pass to the continuous time limit; as an example, the constants defined in Equation~$(A5)$ that drive the concentration inequality does not seem to scale well with the number of discrete time iterations $m$ required in order to achieve mixing.

Without the assumptions of uniformity considered here in Assumption~\ref{assu:general}, \add{sharp} propagation of chaos concentration estimates remain a fully open question. 

\add{Note that if one relaxes the sharpness with respect to the rate in $N$, there is a large body of work (based on different tools) establishing convergence of the stationary particle empirical distribution towards the minimal QSD $\eta_\infty$ under much weaker conditions, even when the QSD is not unique (the selection problem). When the QSD is unique, results have been obtained under fairly general setting, see \textit{e.g.} \cite{burdzy2000fleming, bieniek2012non} for the hard killing case where additional difficulties emerge to show non explosion of the particle system, \cite{Ferrari} for discrete state spaces and \cite{tough2022fleming} where an additional McKean-Vlasov interaction is considered. When the QSD is non-unique, in a more recent series of work (see \cite{villemonais2014general, asselah2016fleming, champagnat2021minimal,tough2025selection}), a proof is obtained for some various but specific cases with unbounded domain.} 

The paper is organized as follows. The precise mathematical description of the assumptions, the results and the class of examples satisfying the assumptions are given in Section~\ref{sec:results}. The main martingales associated with the backward error process are described in Section~\ref{sec:weak-back-error-anal}, especially in Proposition~\ref{prop:mart-var} and~\ref{prop:mart-exp} which contains the main general estimates. The proof of the theorems based on those estimates are studied in Section~\ref{sec:proof}. Finally, we prove Proposition~\ref{prop:mart-var} and~\ref{prop:mart-exp} in Section~\ref{sec:weak-back-error-anal} using generator calculus applied to the Feynman-Kac mean-field formalism.

\section{Precise Results and Examples}\label{sec:results}

\subsection{Notations}

$\mathcal P$ will denote the set of probability measures on $E$. We will denote integration of a bounded test function $f$ by $\eta$ with $\eta(f) = \int f \dd \eta$. Whenever an operator is linear, we will often (though not always) drop the parenthesis and denote \textit{e.g.} $\eta f$, $L f$ and so on. $\one$ will always denote the constant function equal to $1$. For any set $S$, $\mathcal{B}(S)$ denotes the set of measurable and bounded function from $S$ to $\R$. Bold letters will always denote a vector of the form
\[
\x = (x_1,\dots,x_N) \in E^N,
\]
where $N$ is the number of particles. $\mathcal P_N \subset \cal P$ will then denote the set of empirical measures of $N$ particles
\[
\mathcal P_N : = \left\{ \frac{1}{N}\sum_{j=1}^{N}\delta_{x_{j}}(dx); (x_1,\dots,x_N) = \x \in E^N \right\}.
\]

\subsection{Basic considerations}

In this work, we consider a standard probability space endowed with a right-continuous (up to null sets) filtration and assume one can construct from any stopping time $\tau$ and any initial condition $X_0=x$ copies of a time-homogenous Markov processes $(X_t)_{t \geqslant \tau}$ that are at least jointly progressively measurable with respect to randomness, time and initial condition ($(\omega,s,x) \mapsto X_s(\omega,x)$ is jointly measurable for $s \leqslant t$ when restricting on events defined by the filtration at time $t$ ). State spaces (as $E$) are endowed with a usual standard Borel $\sigma$-field derived from a Polish topology. In most cases, $X_{t}$ can be realized as a \textit{c\`adl\`ag} time-homogenous process, Markovian for its own natural filtration, and measurable with respect to initial conditions $X_0=x$. We however stress that the specific topology plays no role in the analysis carried out here.

Associated to the probability transition semi-group of this Markov process, and when this makes sense, we define the associated generator $L$ as its time derivative. More precisely, we will say that a bounded measurable function $f \in \mathcal B (E)$ belongs to the domain $\mathcal D(L)$ (a vector space containing constants) of the generator $L$ if, denoting $\pa{P_t (= Q^0_t)}_{t \geq 0}$ the semi group of the probability transitions, the following holds for any initial condition $x \in E$ and any time $t \geqslant 0$:
\[
\E\po \left|Lf(X_t)\right| \mid X_0 = x\pf < \infty,\text{ and }\partial_t P_t(f)(x) = LP_t(f)(x) = P_t(Lf)(x).
\]
In this case, we recall that
\[
t \mapsto f(X_t) - \int_0^t Lf (X_s) \dd s
\]
is a martingale (see~\cite{ethier2009markov}). \add{By extension, a time dependent function is said to belong to the associated domain if the above holds true for the extension $t \mapsto (X_t,t)$ with generator $L+\partial_t$}.

All constructed martingales are implicitly modified into their \textit{c\`adl\`ag} version (which is possible because the filtration is right-continuous). Note that with practically no loss of generality it is possible to restrict to \textit{c\`adl\`ag} processes for a well-chosen Polish topology of $E$, and to domains consisting only of  continuous bounded functions: $\mathcal D(L) \subset C_b(E)$. This will render all the processes constructed here automatically \textit{c\`adl\`ag}.

\subsection{Mathematical setting}\label{s-sec:math-set}

If $V$ is a bounded measurable function, let us denote by $\Phi$ the non-linear propagator defined for $t\geqslant 0$, $\eta \in \mathcal P$, $f\in \mathcal B(E)$ by
\begin{equation}\label{eq:FK_flow}
  \Phi_{t}(\eta)(f) := \frac{\eta Q_{t}^V(f)}{\eta Q_{t}^V(\one)}.
\end{equation}
This flow is equal to the normalized Feynman-Kac semi-group $t\mapsto\eta_t$. In all the proof, we will use the notation $\Phi$ in order to emphasise on the initial condition.
Fix any bounded jump kernel $K_\eta$ such that for all $f\in\mathcal B(E)$
\[
\eta(S_\eta(f))=  \eta(f)\eta(V) - \eta(Vf)
\]
where
\[
S_\eta(f)(x) = \int _{E} \po f(x') - f(x) \pf K_{\eta}(x,dx')
\]
is a mean field jump generator. The boundedness assumption on the jump kernel has to be understood as: there exists $K_\ast>0$ such that
\begin{equation}\label{eq:bound-jump-kernel}
\sup_{x\in E, \eta \in \mathcal P}\int_E K_\eta(x,\dd x') \leqslant K_\ast.
\end{equation} 

Then, the non-linear propagator satisfies the following Kolmogorov equation:
\begin{equation}\label{eq:Kolmo}
\left\{\begin{array}{lll}
  \partial_t \Phi_{t}(\eta)(f) &=&  \Phi_{t}(\eta) \po    L_{\Phi_{t}(\eta)} (f)   \pf, \\
\Phi_{0}(\eta)(f) &=&\eta(f),
\end{array}\right.
\end{equation}
for all $f\in \mathcal  D(L)$, $\eta\in\mathcal P$, and where
\[
L_\eta = L + S_\eta.
\]


Let us now rigorously define the particle system that will be used as an estimator of this non-linear propagator~\eqref{eq:FK_flow}. Fix a number of particle $N\in\N$ and an initial condition $\eta_0^N$. Then:
\begin{enumerate}
    \item Let $(X^1,\cdots,X^N)$ be a sample of i.i.d. realisation of the process $X$ with initial condition $\eta_0^N$.
    \item Draw a Poisson process with intensity $N K_{\ast}$ that will be used as possible branching times.
    \item At each time of this Poisson process, generically denoted by $\tau$, a particle is killed with probability \[\sum_{i=1}^N \frac{K_{\eta^N_\tau}(\one)(X^i_\tau)}{N K_{\ast}},\] 
    the killed particle being chosen to be $m$ with probability \[ \frac{K_{\eta^N_\tau}(\one)(X^m_\tau)}{ \sum_{i=1}^N K_{\eta^N_\tau}(\one)(X^n_\tau)},\]
    where $\eta^N$ is the empirical measure of the particle system defined in~\eqref{eq:emp-meas-part}.
    \item A uniformly chosen particle in the full sample (including the killed one) is then duplicated and a new trajectory is created using the Markov distribution of the underlying process $X$.
\end{enumerate}

\begin{exa} The two main choices for $K_\eta$ are the following:
\begin{itemize}
\item If $V\geqslant 0$, a possibility of jump kernel is given by the Fleming-Viot process defined by:
\[
K_\eta(x,\dd x') := V(x) \eta(\dd x')
\]
in which case each particle evolves independently according to the underlying generator $L$ (that can be interpreted as independent 'mutations' of particles), and is killed with rate $V(X_t)$, at which time any particle of the full sample of size $N$ (including the killed particle) is chosen uniformly at random, and is then duplicated, keeping the population size constant. Equivalently: each particle is killed with rate $\frac{N-1}{N}V(X_t)$, at which time a surviving particle is chosen uniformly at random, and is then duplicated, keeping the population size constant.

\item A lower variance choice is given by :
\[
K_\eta(x,\dd x') := \pa{V(x)-\eta(V) }_+ \eta(\dd x') + 
 \pa{V(x')-\eta(V) }_- \eta(\dd x')
\]
which can be interpreted as an independent combination of selective killing / uniform cloning with the positive part of the potential function $\pa{V(x)-\eta(V) }_+$, and uniform killing / selective cloning with the negative part of the potential function $\pa{V(x)-\eta(V) }_+$. For this choice $K_\eta \equiv 0$ if $V$ is any constant function.
\end{itemize}
\end{exa}

By construction, if a function $ f \in \mathcal B \po E^N \pf$ is such that for all $i\in \llbracket 1,N \rrbracket$, $x_i \mapsto f(\x) \in \mathcal D(L)$, then $f$ belongs to the domain of the generator $\mathcal L$ of the particle system $\mathcal D(\mathcal L)$ and we will write:
\[
\mathcal L f(\x) = \sum_{i=1}^N L_{\eta_\x}\po x_i \mapsto f(\x) \pf,
\]
where $\eta_\x$ denotes the empirical measure 
\[
\eta_\x(dx)=\frac{1}{N}\sum_{j=1}^{N}\delta_{x_{j}}(dx).
\]
Because of the exchangeability of the particle system, this defines as well a Markov process on the set $\mathcal P_N$ of empirical measure of $N$ particles, \add{endowed with the topology of convergence in distribution}. With a slight abuse of notation we write the generator as
\begin{equation}\label{eq:Lcal_def}
  \mathcal L(\ph)(\eta) = \mathcal L \po \x \mapsto \varphi(\eta_\x) \pf,
\end{equation}
and its domain as
\[
\mathcal D(\mathcal L) := \left\{ \ph:\mathcal P_N\to \R,\, x_i\mapsto \varphi\po \eta_\x \pf\in \mathcal D(L),\, \forall i\in \llbracket1,N\rrbracket\right\},
\]
This operator is the generator of the measure-valued Markov process $\eta^N$ defined in~\eqref{eq:emp-meas-part}.


Before stating the general assumption under which we are going to work, let us define the \textit{carr\'e du champ} operator:

\begin{defi}\label{def:carre-du-champs}
For any Markov generator $G$, we define the associated \textit{carr\'e du champ} operator by 
\[
\Gamma_G(f) :=  \frac{1}{2}\po G(f^2)-2f G(f)\pf,
\]
defined at least for all $f\in \mathcal D(G)$ such that $f^2\in \mathcal D(G)$. 
\end{defi}
The expression of $\Gamma_G$ depends on the exact expression of $G$. For instance, if $G$ is a diffusion operator in $\R^d$ (as can be the case for $G=L$) of the form
\[
G f  = \Delta f + b\cdot\na f,
\]
then the \textit{carr\'e du champ} is given by the square of the gradient
\[
\Gamma_G = |\na\cdot|^2.
\]
If $G$ is a bounded jump operator with kernel $K$
\[
G f(x) = \int_E \po f(x') - f(x)\pf K(x,\dd x'),
\]
then its \textit{carr\'e du champ} is given by:
\begin{equation*}
  \label{eq:gamma_jump}
  \Gamma_{G}(f)(x)=\frac{1}{2}\int_{E}(f(x')-f(x))^2 K(x,\dd x').
\end{equation*}
In particular, we get for the selection part $S_{\eta}$ of the dynamic
\begin{equation*}
  \Gamma_{S_\eta}(f)(x)= \frac{1}{2}\int_{E}(f(x')-f(x))^2 K_{\eta}(x,\dd x').
\end{equation*}
Moreover, the \textit{carr\'e du champ} can be shown to be positive, and is the restriction of a bilinear form, also called the \textit{carr\'e du champ}, defined by \[\Gamma_G(f,g) = G(fg) - fG(g) - gG(f),\]
so that it satisfies the Minkowski inequality
\begin{equation}\label{eq:minskovki}
\Gamma_G(f+g) \leqslant 2\Gamma_G(f) + 2\Gamma_G(g).
\end{equation}
The probabilistic interpretation of the \textit{carr\'e du champ}, proven in Lemma~\ref{lem:carre-du-champ-as-derivatives}, is given by the following infinitesimal variance formula
\begin{equation}\label{eq:Gamma_as_var}
2\Gamma_G(f)(x) = \lim_{t \to 0} \frac{1}{t} \mathbb V \mathrm{ar} \pa{ f(Y_t) \mid Y_0 =x },
\end{equation}
where $Y_t$ has generator $G$.

\subsection{Main assumptions and results}

In order to perform calculus with generators, the proof in the present work is carried out under the following implicit \textit{a priori} assumption: 

\begin{AssD}\label{assu:domain} 
There exists a vector subspace $\mathcal A \subset \mathcal B(E)$ of \add{measure-determining} bounded measurable functions containing constants such that for all $f\in \mathcal A$ and all $T \add{> t} \geqslant 0$, the functions:
\begin{itemize}
    \item $\add{(x,t)} \mapsto \pa{Q^V_{\add{T-t}}(f)}^k(x)$, $k=1,2$,
    \item $(\add{x,t}) \mapsto \ee^{N \frac{Q^V_{\add{T-t}}(f)(x) + a}{Q^V_{\add{T-t}}(\one)(x) + b}}$, $a \in \R$, $b > 0$, 
\end{itemize}
belong to the domain of $L\add{+\partial_t}$ with $\add{(x,t)\in E \times [0,T[}$. 
\end{AssD}

\add{
\begin{exa} Let $(X_t)_{t \geq 0}$ be a diffusion in $\R^d$ (or similarly in a compact differentiable manifold) with globally Lipschitz coefficients, and bounded diffusion matrix denoted $a:\R^d \to \R^{2d}$. $V$ is measurable and bounded. Assume that either : i) $V$ is $C^1(\R^d)$ with bounded first derivative, or ii) the diffusion coefficient $a$ of $(X_t)_{t \geq 0}$ is uniformly bounded from below (uniform ellipticity). Then Assumption~\ref{assu:domain} holds true for smooth compactly supported test functions \textit{e.g.} $\mathcal A = C^\infty_c(\R^d)$ (one could take $\mathcal A$ much larger but this choice is sufficient and convenient here).

Indeed, if $g: \R \to \R$ is any smooth real valued function and $V$ is bounded, it is easy to check that 
\[
(L+\partial_t) g\pa{Q^V_{\add{T-t}}(f)} = V \times g'\pa{Q^V_{\add{T-t}}(f)} + \frac12 g''\pa{Q^V_{\add{T-t}}(f)}\Gamma_L\pa{Q^V_{\add{T-t}}(f),Q^V_{\add{T-t}}(f)},
\]
so that if $V$, $f$, and the diffusion matrix $a$ (recall that $\Gamma_L = a( \nabla \, \lj{\cdot} , \nabla \, \lj{\cdot})$) are bounded, then the above can be bounded using estimates on the gradient of the Feynman-Kac semi-group $\abs{\nabla Q^V_{\add{T-t}}(f)}$. Upper bounds on the latter are widely studied in the literature. In particular, a uniform upper bound $\sup_{\R^d} \abs{\nabla Q^V_{\add{T-t}}(f)} < +\infty $ can be obtained for case i) by direct differentiation of the stochastic flow of a strong solution (a classical reference is~\cite{kunita1990stochastic}), and for case ii) using  Duhamel formula ($Q^V_{t} = Q^0_{t} - \int_0^t Q^0_{t-s}( V Q^{0}_s(f)) \dd s $) and standard diffusion semi-group gradient estimates ($\abs{ \nabla Q^0_{t}(f) } \leq \frac{c}{\sqrt{t}} \norm{f}_\infty$)   (one can see \textit{e.g.}~\cite{picard2002gradient}  and~\cite{PRIOLA2006244} for recent results with irregular coefficients).
\end{exa}
}
\begin{rem}\label{rem:domain}
 Without Assumption~\ref{assu:domain}, the proof is \textit{a priori} formal. However, the estimates obtained in Proposition~\ref{prop:mart-var} and~\ref{prop:mart-exp} (together with \textit{e.g.} the \textit{carr\'e-du-champ} bounds~\eqref{eq:born-carre-champ2}-\eqref{eq:born-carre-champ} of Assumption~\ref{assu:general}) still make sense without any assumption on the domains of the generators.
It is therefore in principle possible to obtain the present results without Assumption~\ref{assu:domain}, either by using stochastic analysis alone, or alternatively (with some technical work), by considering regularized versions of the initial process $X^\epsilon_t$ and then pass to the limit $\epsilon \to 0$ in the final estimates, the latter depending only on the bounds of Assumption~\ref{assu:general} below. In such generalized cases, upper bounds on the carr\'e-du-champs operator may be understood as upper bounds on the infinitesimal variance given by \eqref{eq:Gamma_as_var}. In most cases, however, it is not hard to check Assumption~\ref{assu:domain} by choosing an appropriate \add{measure-determining} space $\mathcal A$ of sufficiently regular test functions.
\end{rem}

The main quantitative assumption of our work is the following:
\begin{AssU}\label{assu:general} \add{Let $\mathcal A \subset \mathcal B(E)$ denote the same measure-determining vector subspace of bounded measurable functions on $E$ containing constants as in Assumption~\ref{assu:domain}. Let $\mathcal A$ be endowed with two semi-norms $\mathcal N_1, \mathcal N_2$}. The following holds:
\begin{itemize}
    \item $V$ and $K_\eta$ are bounded (in the sense of~\eqref{eq:bound-jump-kernel}).
    \item There exists a real $\lambda \in \R$ and constants $C_+,c_->0$ such that for all $t \geqslant 0$ and all $x \in E$
    \begin{equation}\label{eq:borne-semi-group}
    c_-  \leqslant  e^{\lambda t} Q^V_{t}(\one)(x) \leqslant C_+ .
    \end{equation}
    \item There exists a (\add{necessarily unique}) probability distribution $\eta_\infty$ (the so-called quasi-stationary distribution) and a non-increasing $w_1\in L^1(\R_+)\cap L^2(\R_+)$ such that for all 
    $f\in \mathcal A$, $ t\geqslant 0$, $x\in E$
    \begin{equation}\label{eq:conv-semi-group}
    \left|e^{\lambda t}Q^V_t(f- \eta_\infty(f))(x)  \right| \leqslant w_1(t)\mathcal N_1(f).
    \end{equation}
     \item There exist a constant $C$ such that for all $t\geqslant 0$
    \begin{equation}\label{eq:born-carre-champ2}
    \ee^{\lambda t} \sqrt{\Gamma_L(Q^V_{t}(\one))} \leqslant C .
    \end{equation}
    \item There exists some non-increasing $w_2\in L^1(\R_+)\cap L^2(\R_+)$ such that for all $f\in \mathcal A$ and $t \geqslant 0$
    \begin{equation}\label{eq:born-carre-champ}
     \ee^{\lambda t} \sqrt{\Gamma_L(Q^V_{t}(f - \eta_\infty(f) ))} \leqslant w_2(t)\mathcal N_2(f).
    \end{equation}
\end{itemize}
\end{AssU}
\add{
\begin{rem}  Under the Harnack-type condition~\eqref{eq:borne-semi-group} of Assumption~\ref{assu:general}, the convergence condition~\eqref{eq:conv-semi-group} is essentially equivalent to convergence in time of
\[
\frac{\nu Q^V_t}{\nu Q^V_t(\one)}
\]
towards the unique QSD $\eta_\infty$, uniformly in the initial distribution $\nu$ (see~\eqref{eq:conv-non-linear-flow} for details) with speed $w_1$ ($w_1$ is typically exponential). Exactly this behavior can be obtained using the general theory of~\cite{conv-QSD} under Lyapounov-type condition and local Doeblin minorization (or Dobrushin criterion) instead of Condition~\eqref{eq:conv-semi-group} of Assumption~\ref{assu:general}. We avoid this type of setting in the presentation of the conditions in Assumption~\ref{assu:general} for two reasons. First we stick to conditions on the semi-goup $Q^V_t$ that are of direct use in the proofs of this work. Second, the carré-du-champ (or gradient for diffusions) estimates~\eqref{eq:born-carre-champ2}-\eqref{eq:born-carre-champ} is not obtained in such general settings. In Section~\ref{sec:exemple}, which treats diffusion examples we rather resort to a $h$-transform technique (where $h$ is the necessarily unique bounded positive eigen-function) in order to import results on Markov semi-groups.

\end{rem}
}
\add{
\begin{exa} If $(X_t)_{t \geq 0}$ is a uniformly elliptic diffusion with Lipschitz coefficients on a compact Riemannian smooth manifold and $V$ is only bounded measurable, then Assumptions~\ref{assu:general}-\ref{assu:domain} are satisfied with $\mathcal A = C^\infty$, $\calN_1 = \norm{ \,\,\,}_\infty$ and $\calN_2 = \norm{ \nabla \,\,\,}_\infty$, see Lemma~\ref{lem:compact_conv}.

In Section~\ref{sec:exemple}, specific conditions and examples ensuring Assumptions~\ref{assu:general} (and~\ref{assu:domain}) for a large class of diffusions in $\R^d$ (in particular the quite specific boundedness of $\log h$ and its gradient, $h$ being the positive eigenvector of $L-V$), will be provided.
\end{exa}
}
Under this assumption as well as the previous one, we first prove a theorem on the $L^p$ error,
\begin{thm}\label{thm:var-lower-bound-h}
Under Assumptions~\ref{assu:domain} and~\ref{assu:general}, for all $p \geq 1$, there exists a constant $C_p>0$ (which depends explicitly on the constants in  Assumption~\ref{assu:general}) such that for all $f\in\mathcal A$, and all $N\in\N$
\[ \sup_{t\geqslant 0F} \E \po \po \eta^N_t(f) - \Phi_t(\eta_0^N)(f) \pf^p\pf \leqslant C_p\frac{ \mathcal N_1(f)^p + \mathcal N_2(f)^p + \|f\|_{\infty}^p}{N^{p/2}},\]
\[ \sup_{t\geqslant 0} \E \po \po \eta^N_t(f) - \Phi_t(\eta_0^N)(f) \pf^2\pf \leqslant C_2\frac{ \mathcal N_1(f)^2 + \mathcal N_2(f)^2}{N},\]
and
\[
\sup_{t\geqslant 0} \left | \E \po  \eta^N_t(f) - \Phi_t(\eta_0^N)(f)  \pf \right| \leqslant C_1\frac{ \mathcal N_1(f) + \mathcal N_2(f)}{N}.
\]
\end{thm}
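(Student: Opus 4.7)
The plan is to use the exact backward error analysis sketched in the introduction. Fix a time horizon $T \geq 0$ and introduce the deterministic functional $\ph_t : \calP \to \R$ defined for $t \in [0,T]$ by
\[
\ph_t(\mu) := \Phi_{T-t}(\mu)(f) = \frac{\mu\pa{Q^V_{T-t}(f)}}{\mu\pa{Q^V_{T-t}(\one)}}.
\]
By construction $\ph_0(\eta^N_0) = \Phi_T(\eta^N_0)(f)$ and $\ph_T(\eta^N_T) = \eta^N_T(f)$, so the error telescopes as
\[
\eta^N_T(f) - \Phi_T(\eta^N_0)(f) = \ph_T(\eta^N_T) - \ph_0(\eta^N_0).
\]
Since $\Phi_t(\eta)(f - c) = \Phi_t(\eta)(f) - c$ for every constant $c$, the left-hand side is invariant under $f \mapsto f - \eta_\infty(f)$; absorbing the harmless factor in $\calN_1, \calN_2$ and $\|f\|_\infty$, I assume from now on that $\eta_\infty(f) = 0$.

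The core step is to apply Dynkin's formula to the c\`adl\`ag semi-martingale $t \mapsto \ph_t(\eta^N_t)$ (legitimized by Assumption \ref{assu:domain}) to obtain
\[
\ph_T(\eta^N_T) - \ph_0(\eta^N_0) = \int_0^T \pa{\partial_t \ph_t + \mathcal L \ph_t}(\eta^N_t)\, dt + M_T,
\]
where $M$ is a martingale. Along the exact nonlinear flow $\Phi$ the drift would vanish identically by the Kolmogorov equation \eqref{eq:Kolmo}; for the particle system it survives only because $\ph_t$ is \emph{non-linear} in $\mu$ (a ratio), so a second-order Taylor expansion of $\ph_t$ along the jump part of $\mathcal L$ produces an $O(1/N)$ correction. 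This is precisely the content I would invoke from Propositions \ref{prop:mart-var} and \ref{prop:mart-exp}: both the drift integrand and the predictable quadratic variation density of $M$ are pointwise controlled by $1/N$ times carr\'e-du-champ expressions involving $Q^V_{T-t}(f)$ and $Q^V_{T-t}(\one)$.

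Time-uniformity arises from the centered form of the estimates. Because $\eta_\infty(f) = 0$, the bounds \eqref{eq:conv-semi-group} and \eqref{eq:born-carre-champ} give pointwise $|Q^V_{T-t}(f)| \leq e^{-\lambda(T-t)} w_1(T-t) \calN_1(f)$ and $\sqrt{\Gamma_L(Q^V_{T-t}(f))} \leq e^{-\lambda(T-t)} w_2(T-t)\calN_2(f)$; the denominator is controlled below by \eqref{eq:borne-semi-group} as $\mu\pa{Q^V_{T-t}(\one)} \geq c_- e^{-\lambda(T-t)}$ uniformly in $\mu$; the carr\'e du champ of $Q^V_{T-t}(\one)$ is controlled by \eqref{eq:born-carre-champ2}. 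The factors $e^{\pm \lambda(T-t)}$ then cancel between numerators and denominators in the ratio $\ph_t$. Hence the drift integrand is pointwise bounded by $\tfrac{C}{N}\pa{w_1(T-t)^2 \calN_1(f)^2 + w_2(T-t)^2 \calN_2(f)^2 + \dots}$, and similarly for the quadratic variation density. Since $w_1, w_2 \in L^1 \cap L^2$, integration in $t$ yields bounds uniform in $T$.

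Taking expectation kills $M_T$ and gives the bias estimate directly. For $p = 2$ I would expand $(A+B)^2 \leq 2A^2 + 2B^2$: the squared drift is $O(1/N^2)$ while $\E\br{M_T^2} = \E\br{\langle M\rangle_T}$ is $O(1/N)$, and $\|f\|_\infty$ does not appear because the second-order drift and the carr\'e du champ only produce products of $\calN_1(f)$ and $\calN_2(f)$. For general $p \geq 2$ I would apply Burkholder--Davis--Gundy to $M$, using that individual jumps of $\eta^N$ change the empirical measure by $O(1/N)$: combined with the time-integrability of the $\Gamma$-bounds above, this yields $\E\br{|M_T|^p} = O(N^{-p/2})$ time-uniformly; the sup-norm $\|f\|_\infty$ enters only at this stage, via single-jump control of $\ph$ and crude bounds on discrete Taylor remainders needed for $p \geq 3$. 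The main obstacle throughout is time-uniformity: without the $L^1$-integrability of $w_1, w_2$ and the uniform lower bound $c_-$, the contributions would accumulate linearly in $T$, and the careful accounting of the $e^{\pm \lambda(T-t)}$ cancellations inside the nonlinear ratio $\ph_t$ is the technical heart of the argument.
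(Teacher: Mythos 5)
Your overall strategy is exactly the paper's: the backward error process $t\mapsto \Phi_{T-t}(\eta^N_t)(f)$, its Dynkin/Doob--Meyer decomposition with an $O(1/N)$ drift (Proposition~\ref{prop:mart-var}), time-uniformity through the centred bounds of Assumption~\ref{assu:general} and the $L^1\cap L^2$ integrability of $w_1,w_2$ (this is Lemma~\ref{lem:bound-general} in the paper), taking expectations for the bias, and $\E\po M_T^2\pf=\E\left<M\right>_T$ for $p=2$. For $p=1,2$ your proposal is correct, up to one slip: the drift integrand is bounded \emph{linearly} in $\calN_1(f),\calN_2(f)$, since it is of the form $\sqrt{\eta\Gamma_{L_\eta}\po Q^V_{T-t}(\overline f)\pf}\sqrt{\eta\Gamma_{L_\eta}\po Q^V_{T-t}(\one)\pf}+\Phi_{T-t}(\eta)(\overline f)\,\eta\Gamma_{L_\eta}\po Q^V_{T-t}(\one)\pf$ and the $\one$-factors carry no norm of $f$; it is not quadratic as you wrote. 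This linearity is precisely what yields the stated bias bound $C_1(\calN_1(f)+\calN_2(f))/N$ rather than a quadratic-in-norm bound.

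The genuine gap is in the case $p>2$. Your single line ``apply BDG, using that individual jumps of $\eta^N$ are $O(1/N)$'' hides the two points the paper actually has to prove. First, the BDG inequality is stated with the uncompensated quadratic variation $[M^{1,T}]$, whereas all the available estimates (via Proposition~\ref{prop:mart-var} and Lemma~\ref{lem:bound-general}) control only the predictable bracket $\left<M^{1,T}\right>$; passing from one to the other is not automatic and is the content of the modified BDG inequality of Lemma~\ref{lem:BDG}, which requires an almost sure bound $m$ on the jump sizes and produces the extra additive term $C_p m^p$ (this is indeed where $\|f\|_\infty$ enters, as you guessed). Second, the jump bound $\abs{\Delta_t M^{1,T}}\leqslant 2\|f\|_\infty C_+^2/(N c_-^2)$ holds only if no two particles ever jump at the same instant: this is clear for the branching events (one killing at a time by construction), but the underlying Markov process is allowed to have jumps, so a priori two particles could jump simultaneously and the jump of $\eta^N$ would then not be of size $O(1/N)$ in the relevant sense. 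The paper rules this out via the quasi-left-continuity Lemma~\ref{lem:quasi-left} together with a conditioning/independence argument between branching times; without such an argument (or an extra standing assumption that $X$ has continuous paths) your assertion that each jump of the empirical measure moves a single particle is unjustified. With these two ingredients supplied, your sketch completes into the paper's proof.
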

\add{This theorem has already been obtained in~\cite{Rousset06} for reversible diffusions, using very involved technical tricks, but without resorting to the gradient conditions~\eqref{eq:born-carre-champ2}-\eqref{eq:born-carre-champ}. The present proof is much more straightforward.}

Next is a new theorem related to concentration inequalities. 

\begin{thm}\label{thm:exp-bound}
Under assumptions~\ref{assu:domain} and~\ref{assu:general}, there is a constant $C>0$ (which depends explicitly on the constants in Assumption~\ref{assu:general}), such that for all $f\in\mathcal A$, and all $N \in\N$:
\[ 
\sup_{t\geqslant 0} \E \po e^{N\po\eta^N_t(f) - \Phi_t(\eta_0^N)(f)\pf} \pf  \leqslant e^{Ce^{C\|f\|_{\infty}} \po  N\po \mathcal N_1(f)^2 + \mathcal N_2(f)^2 \pf +  \po \mathcal N_1(f) + \mathcal N_2(f)\pf\pf }.
\]
\end{thm}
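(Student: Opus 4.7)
Fix $T \geqslant 0$ and follow the backward error strategy outlined in the introduction. Define
\[
\ph_t(\eta) := \Phi_{T-t}(\eta)(f) = \frac{\eta Q^V_{T-t}(f)}{\eta Q^V_{T-t}(\one)}, \qquad t\in[0,T],
\]
so that $\ph_T(\eta^N_T) = \eta^N_T(f)$, $\ph_0(\eta_0^N) = \Phi_T(\eta_0^N)(f)$, and
\[
\eta^N_T(f) - \Phi_T(\eta_0^N)(f) = \ph_T(\eta^N_T) - \ph_0(\eta_0^N).
\]
The crucial feature, already used for Theorem~\ref{thm:var-lower-bound-h}, is that the nonlinear semi-group identity $\Phi_{T-t}\circ \Phi_t = \Phi_T$ makes $t \mapsto \ph_t(\Phi_t(\eta))$ constant in $t$; hence the drift of $\ph_t(\eta^N_t)$ under the particle generator only collects finite-$N$ corrections to the mean-field flow. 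The uniform estimates \eqref{eq:borne-semi-group} keep the ratio $\ph_t$ bounded away from $0$ and $\infty$.

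\paragraph{Exponential semi-martingale decomposition.} Set $M_t := \exp\pa{N\ph_t(\eta^N_t)}$ and apply the time-dependent generator $\partial_t + \mathcal L$ (this is exactly the role of Proposition~\ref{prop:mart-exp}) to obtain
\[
dM_t = dZ_t + M_t\, A_t\, dt,
\]
with $Z$ a local martingale and $A_t$ an explicit functional of $\eta^N_t$. The compensator $A_t$ collects three contributions: (i) a residual drift in which the Kolmogorov equation \eqref{eq:Kolmo} cancels the mean-field order and leaves an $O(1/N)$ nonlinear remainder (the bias term, multiplied by $N$ from the exponential), (ii) the carr\'e du champ $\Gamma_L(\ph_t)$, amplified by $N^2$ in the exponential and of size $O(1/N)$ due to the empirical-measure structure, yielding an $O(N)$ contribution controlled by \eqref{eq:born-carre-champ2}--\eqref{eq:born-carre-champ}, and (iii) the jump/selection part of $\mathcal L$ applied to the exponential, producing terms of the form $N K_\ast \pa{\ee^{N\,\delta} - 1 - N\,\delta}$, where the increment $\delta$ has magnitude of order $1/N$ but is multiplied by a sensitivity of $\ph_t$ essentially of size $\|f\|_\infty$. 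These jump terms are the origin of the factor $\ee^{C\|f\|_\infty}$ in the final bound.

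\paragraph{Bounding the compensator and conclusion.} Since constants may be freely subtracted from numerator and denominator inside $\ph_t$, one can replace $Q^V_{T-t}(f)$ by $Q^V_{T-t}(f - \eta_\infty(f))$, which brings the decaying weights $w_1,w_2$ of Assumption~\ref{assu:general} into play. Combining \eqref{eq:borne-semi-group}--\eqref{eq:born-carre-champ} yields a pointwise bound of the form
\[
|A_t| \leqslant C\,\ee^{C\|f\|_\infty}\Big[ N\bigl(w_1(T-t)^2 \mathcal N_1(f)^2 + w_2(T-t)^2 \mathcal N_2(f)^2\bigr) + w_1(T-t)\,\mathcal N_1(f) + w_2(T-t)\,\mathcal N_2(f)\Big],
\]
and since $w_1, w_2 \in L^1(\R_+) \cap L^2(\R_+)$, integrating over $[0,T]$ gives
\[
\int_0^T |A_t|\,dt \leqslant C\,\ee^{C\|f\|_\infty}\Big[ N\bigl(\mathcal N_1(f)^2 + \mathcal N_2(f)^2\bigr) + \mathcal N_1(f) + \mathcal N_2(f) \Big],
\]
uniformly in $T$. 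Because $M$ is bounded and bounded away from $0$ (by \eqref{eq:borne-semi-group}) and $A$ is uniformly bounded, a standard localization and Gronwall argument promote $M_\cdot \exp\bigl(-\int_0^\cdot A_s ds\bigr)$ to a super-martingale, so that $\E[M_T] \leqslant M_0\,\exp\bigl(\int_0^T A_t\,dt\bigr)$. Since $M_0$ is $\sigma(\eta_0^N)$-measurable and equals $\ee^{N\Phi_T(\eta_0^N)(f)}$, dividing through yields the required bound on $\E[\ee^{N(\eta^N_T(f) - \Phi_T(\eta_0^N)(f))}]$, uniformly in $T$.

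\paragraph{Main obstacle.} The technical heart of the proof is step (iii): the exponentiated jump increments from the selection generator $S_\eta$ are not quadratic in the increment, so the compensator cannot be controlled by carr\'e du champ alone. One must use the $L^\infty$ estimate $\abs{N\,\delta} \leqslant C\|f\|_\infty$ to Taylor-expand, absorbing the resulting exponential into the multiplicative constant $\ee^{C\|f\|_\infty}$. The recentering by $\eta_\infty$ inside $\ph_t$ is equally essential: without it, the $\Gamma_L$ time integral would grow linearly in $T$ and destroy uniformity in time. Finally, one must check that the localization sequence for the local martingale $Z$ converges, which follows from the two-sided bound on $M$ furnished by \eqref{eq:borne-semi-group} together with the boundedness of $f$.
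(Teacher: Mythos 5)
Your proposal is correct and follows essentially the same route as the paper: the backward error process $\ph_t(\eta)=\Phi_{T-t}(\eta)(f)$, the exponential martingale decomposition of $\exp(N\ph_t(\eta^N_t))$ from Proposition~\ref{prop:mart-exp}, the identification of the compensator as the $O(N)$ exponential carr\'e-du-champ term plus the $O(1)$ mean-field remainder with the factor $\ee^{C\norm{f}_\infty}$ coming from the non-quadratic (jump) exponentiated increments controlled via the $L^\infty$ bound on the flat derivative, the recentering by $\eta_\infty$ to invoke the $w_1,w_2\in L^1\cap L^2$ decay (Lemma~\ref{lem:bound-general}), and the conclusion by the (super)martingale property of the exponential compensated process. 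The only cosmetic differences are that the paper centers $\ph_t$ by $\Phi_T(\eta_0^N)(f)$ from the start rather than dividing by $M_0$ at the end, and states the exponential process directly as a martingale under Assumption~\ref{assu:domain} instead of passing through localization.
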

As mentioned in the introduction, the interest of Theorem~\ref{thm:exp-bound} resides in the following concentration bounds that follows from it:
\begin{cor}\label{cor:concentration}
Under the assumptions of Theorem~\ref{thm:exp-bound}, there exists a (small) $c>0$ that only depends on the constants invovled in Assumption~\ref{assu:general} such that for all $f\in\mathcal A$ verifying
\[
\norm{f }_\infty + \mathcal N_1(f) + \mathcal N_2(f) \leqslant 1,
\]
all initial condition of the particle system $\eta^N_0$ and all $u \geqslant 0$ it holds:
\[
\sup_{t \geqslant 0} \mathbb P\po \left | \eta^N_t(f) - \Phi_t(\eta_0^N)(f)\right| \geqslant u \pf \leqslant 2\ee^{\frac12 } \exp \pa{- N c \, \frac{u^2}{1+u}},
\]
and in particular
\[
\limsup_{t \to + \infty}\mathbb P\po \left | \eta^N_t(f) - \eta_\infty(f) \right| \geqslant u \pf \leqslant 2\ee^{\frac12 } \exp \pa{- N {c}{} \, \frac{u^2}{1+u}}
\]
where $\eta_\infty$ is the quasi-stationary distribution.
\end{cor}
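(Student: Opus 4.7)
The strategy is a standard Chernoff-type exponentiation argument using Theorem~\ref{thm:exp-bound} as the input: the exponential moment bound, fed into Markov's inequality with a well-chosen parameter, yields a tail estimate of Bernstein form $\exp(-Ncu^2/(1+u))$ which interpolates between Gaussian-like decay (small $u$) and exponential decay (large $u$).

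First, I would apply Theorem~\ref{thm:exp-bound} to the rescaled test function $\lambda f$ for a parameter $\lambda \in (0,1]$ to be optimized. Using positive homogeneity of $\mathcal{N}_1$, $\mathcal{N}_2$, and $\|\cdot\|_\infty$, together with the normalization $\|f\|_\infty + \mathcal{N}_1(f) + \mathcal{N}_2(f) \leqslant 1$, the factor $e^{C\|\lambda f\|_\infty}$ is bounded by $e^C$ and each of $\mathcal{N}_i(\lambda f)^2$, $\mathcal{N}_i(\lambda f)$ is bounded by $\lambda^2$, $\lambda$ respectively. This gives
\[
\E\pa{e^{N\lambda(\eta^N_t(f) - \Phi_t(\eta_0^N)(f))}} \leqslant \exp\pa{K(N\lambda^2 + \lambda)}
\]
for an explicit constant $K = K(C)$ depending only on the constants of Assumption~\ref{assu:general}.

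Markov's inequality then yields $\mathbb{P}(\eta^N_t(f) - \Phi_t(\eta_0^N)(f) \geqslant u) \leqslant \exp(-N\lambda u + KN\lambda^2 + K\lambda)$. I would optimize by choosing $\lambda = u / (2K(1+u))$, which lies in $(0,1]$ provided $K \geqslant 1/2$ (easily arranged by enlarging $K$). A short computation shows the quadratic term absorbs half the linear gain $N\lambda u$, while $K\lambda \leqslant 1/2$ produces the $e^{1/2}$ prefactor, leaving
\[
\mathbb{P}\pa{\eta^N_t(f) - \Phi_t(\eta_0^N)(f) \geqslant u} \leqslant e^{1/2}\exp\pa{-\frac{Nu^2}{4K(1+u)}}.
\]
Applying the same argument to $-f$ (for which $\|-f\|_\infty$, $\mathcal{N}_i(-f)$ all coincide with those of $f$) and taking a union bound produces the factor $2e^{1/2}$ and the claimed bound with $c = 1/(4K)$.

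For the $\limsup$ statement, I would show $\Phi_t(\eta_0^N)(f) \to \eta_\infty(f)$ uniformly as $t \to \infty$. Writing
\[
\Phi_t(\eta_0^N)(f) - \eta_\infty(f) = \frac{\eta_0^N\pa{e^{\lambda t} Q^V_t(f - \eta_\infty(f))}}{\eta_0^N\pa{e^{\lambda t} Q^V_t(\one)}},
\]
the numerator is bounded by $w_1(t)\mathcal{N}_1(f)$ via~\eqref{eq:conv-semi-group} and the denominator is bounded below by $c_-$ via~\eqref{eq:borne-semi-group}. Since $w_1$ is non-increasing and integrable, $w_1(t) \to 0$, so the deviation between $\Phi_t(\eta_0^N)(f)$ and $\eta_\infty(f)$ can be absorbed into an $\epsilon$-loss in the tail parameter $u$; passing to the limsup and then $\epsilon \to 0$ closes the argument. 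The entire technical content sits in Theorem~\ref{thm:exp-bound}, so the only subtlety here is bookkeeping the constants during the Chernoff optimization—I do not expect a genuine obstacle.
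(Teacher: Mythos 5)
Your proposal is correct and follows essentially the same route as the paper: a Chernoff/Markov argument applied to the exponential moment bound of Theorem~\ref{thm:exp-bound} with a rescaled test function, the scaling parameter chosen proportional to $u/(1+u)$ and small enough that the quadratic term absorbs half the linear gain (the paper takes $\alpha=\kappa u/(1+u)$ with $C\kappa\ee^{C\kappa}\leqslant \frac12$, which is the same bookkeeping as your $\lambda=u/(2K(1+u))$), then a union bound over $\pm f$. Your explicit handling of the $\limsup$ statement via the uniform convergence $\Phi_t(\eta_0^N)(f)\to\eta_\infty(f)$ is a correct filling-in of a step the paper leaves implicit.
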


\begin{rem} The left hand sides in Theorem~\ref{thm:var-lower-bound-h}, Theorem~\ref{thm:exp-bound} and Corollary~\ref{cor:concentration} are invariant by addition of a constant to the test function $f \in \mathcal A$. As a consequence, the three norms (or semi-norms) $ \mathcal N_1(f) $, $ \mathcal N_2(f)$ and $ \|f\|_{\infty} $ can be replaced by $ \inf_c \mathcal N_1(f -c)$, $ \inf_c \mathcal N_2(f- c) $ and  $ \inf_c \norm{f-c}_\infty$ respectively. We have avoided to use them in the statements in order to lighten notation. 
\end{rem}
\begin{proof}[Proof of Corollary~\ref{cor:concentration}] Denote $\overline f= f - \Phi_t(\eta_0^N)(f)$. For any $\alpha >0$, write using Theorem~\ref{thm:exp-bound}:
\begin{align*}
\mathbb P\po \eta^N_t\po \overline f \pf \geqslant u \pf 
&= \mathbb P\po \exp \po \alpha N\po \eta^N_t\po \overline f \pf \pf \pf \geqslant \exp\po \alpha Nu\pf \pf  
\\ &\leqslant e^{-\alpha Nu}\E \po e^{N\po\eta^N_t(\alpha \overline f)  \pf} \pf  
\leqslant \ee^{-\alpha Nu}\ee^{C e^{C\alpha} \po  \alpha^2 N +  \alpha \pf}.
\end{align*}
Now fix $\kappa$ such that $C \kappa e^{C \kappa} \leqslant  \frac12$ and take
$
\alpha := \kappa \frac{u}{1+u}.
$
We have that $\alpha \leqslant \kappa$,
as well as:
\begin{align*}
\mathbb P\po \eta^N_t\po \overline f \pf \geqslant u \pf & \leqslant \exp{\pa{ - N \kappa u^2/(1+ u) + \frac12 N \kappa u^2/(1+ u)^2 + u/(2+2 u)}} \\
& \leqslant  \exp{ \pa{ - N \kappa \frac12 u^2/(1+ u) +  1/2} } .
\end{align*}
The deviations from the left are obtained in the same way, taking $-f$ instead of $f$, and this concludes the proof.
\end{proof}

\begin{rem} 
The concentration inequality in Corollary~\ref{cor:concentration} can be compared to Chernov-based inequalities for i.i.d. variables. For instance, Bernstein inequality yields that if $(X_n)_{1\leqslant n \leqslant N}$ are centered with unit variance and $\abs{X_n} \leqslant M $ almost surely, then
\[
\mathbb P \br{ \frac1N \sum_{n=1}^N X_n \geqslant u } \leqslant \exp\pa{ - N\frac{u^2}{2}\frac{1}{1+M u /3}  },
\]
which is also a correction to Gaussian concentration (Gaussian concentration is of order $O(u^2)$) with an affine correction term $1+u$. In particular, such standard concentration inequalities in the i.i.d. case tell us that the leading order large deviations term $-N u^2$ in the exponential (for $N$ large and $u$ small) is optimal, up to (here sub-optimal) constant $c$.
\end{rem}

\subsection{Examples}\label{sec:exemple}

We provide in this section a class of examples in $\R^d$ satisfying Assumption~\ref{assu:general}. We consider diffusions of the form 
\begin{equation*}
\dd X_t = b(X_t)\dd t + \sqrt{2}\dd B_t,
\end{equation*}
with $b$ a locally Lipshitz vector field. If we suppose the existence of a positive eigenfunction $h$ satisfying $Lh - Vh = -\lambda h$ (\add{that is necessarily unique under Condition~\eqref{eq:borne-semi-group})}, we may then define $P^h$ as the $h-$transform of $Q^V$ by
\[
P^h_t(f) = e^{\lambda t}h^{-1}Q^V_t(hf),
\]
for all $f$ such that $hf\in\mathcal B(E)$.
Using this semi-group, one \add{can then expect} to check Assumption~\ref{assu:general} using known results on the theory of large time behavior of diffusion processes. An exhaustive analysis of results related to the convergence condition~\eqref{eq:conv-semi-group} can be found in~\cite{conv-QSD}. Additionally,~\cite{conv-QSD-uniform} focus on a Lyapounov criteria ensuring uniform (with respect to the initial condition) convergence. 
For simplicity, we rather give a straightforward proof using the classical reference~\cite{down1995exponential}. 

\begin{lem}\label{lem:conv_ex} Let $V$ be bounded and assume that there exists a \add{(necessarily unique)} positive eigenfunction $Lh - Vh = \lambda h$ such that
\begin{itemize}
    \item $\log h$ is bounded,
    \item There is a $\delta > 0$ and a \emph{bounded} Lyapounov function $\phi \geqslant 1$ (in the domain of $L$) such that, outside a compact set:
    \begin{equation}\label{eq:Lyap}
    L(\phi) \leqslant - (\delta - V + \lambda) \phi .
    \end{equation}
\end{itemize}

Then all of Assumption~\ref{assu:general} except the carr\'e-du-champ bounds \eqref{eq:born-carre-champ2}-\eqref{eq:born-carre-champ} hold true; in particular~\eqref{eq:conv-semi-group} is satisfied with $w_1(t)$ a decreasing exponential and $\calN_1$ the supremum norm.
\end{lem}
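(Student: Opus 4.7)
The plan is to reduce every item of Assumption~\ref{assu:general} (except the carr\'e-du-champ ones) to a statement about the Markov semi-group $P^h$ obtained by the $h$-transform, and then to close the exponential convergence by Foster-Lyapounov arguments \cite{down1995exponential}. First I would record that, since $Q^V_t h = \ee^{-\lambda t} h$, the family $P^h_t f := \ee^{\lambda t} h^{-1} Q^V_t(hf)$ satisfies $P^h_t \one = \one$, so it is a genuine Markov semi-group with generator $L^h g = h^{-1}(L - V + \lambda)(hg)$; the inversion formula $\ee^{\lambda t} Q^V_t f = h \cdot P^h_t(f/h)$ then lets each remaining item of Assumption~\ref{assu:general} be translated into a statement about $P^h$.

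The easy items are handled directly. Boundedness of $V$ is a hypothesis, and boundedness of the standard kernels $K_\eta$ follows from $\norm{V}_\infty<\infty$. For~\eqref{eq:borne-semi-group}, write $\ee^{\lambda t} Q^V_t\one = h \cdot P^h_t(1/h)$; Markovianity of $P^h$ forces $\inf(1/h) \leqslant P^h_t(1/h) \leqslant \sup(1/h)$, and since $\log h$ is bounded these envelopes are finite and strictly positive, giving the two-sided estimate.

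For the substantial step~\eqref{eq:conv-semi-group}, set $\psi := \phi/h$; a direct calculation using the Lyapounov hypothesis~\eqref{eq:Lyap} gives, outside the compact set,
\[
L^h \psi = h^{-1}\pa{L\phi - V\phi + \lambda\phi} \leqslant -\delta\,\psi,
\]
with $\psi$ bounded above and below by strictly positive constants (since $\phi\geqslant 1$ is bounded and $\log h$ is bounded). This is a Foster-Lyapounov drift condition for $P^h$; combined with the standard small-set / minorization condition on the compact set (automatic for non-degenerate diffusions with locally Lipschitz drift, via Gaussian lower bounds on the transition density), \cite{down1995exponential} delivers a probability measure $\pi^h$ invariant for $P^h$ and constants $M,\gamma>0$ such that $\norm{P^h_t g - \pi^h(g)}_\infty \leqslant M\ee^{-\gamma t}\norm{g}_\infty$ for every $g\in\mathcal B(E)$. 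I would then decompose
\[
\ee^{\lambda t} Q^V_t(f-\eta_\infty(f))(x) = h(x)\br{P^h_t(f/h)(x) - \eta_\infty(f)\,P^h_t(1/h)(x)}
\]
and identify the quasi-stationary distribution as $\eta_\infty(f) = \pi^h(f/h)/\pi^h(1/h)$, which makes the limit of the bracket vanish; applying the ergodicity bound to $g=f/h$ and $g=1/h$, together with $\abs{\eta_\infty(f)}\leqslant\norm{f}_\infty$ and the boundedness of $h$ and $1/h$, yields~\eqref{eq:conv-semi-group} with $\calN_1=\norm{\cdot}_\infty$ and $w_1(t)=C\ee^{-\gamma t}\in L^1(\R_+)\cap L^2(\R_+)$. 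The one non-elementary ingredient is the small-set condition, which is where the diffusion structure actually enters; everything else is algebra around the $h$-transform.
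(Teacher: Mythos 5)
Your proposal is correct and follows essentially the same route as the paper: verify \eqref{eq:borne-semi-group} from the eigenfunction relation and bounded $\log h$, pass to the $h$-transform $P^h$, check the Foster--Lyapounov drift condition for $\phi/h$, invoke the uniform exponential ergodicity result of Down--Meyn--Tweedie (the paper's condition $(\tilde{\mathcal D})$ and Theorem 5.2, with the petite/small-set property supplied by density lower bounds for the underlying diffusion), and transfer back using bounded $\log h$. The only cosmetic differences are that you obtain \eqref{eq:borne-semi-group} via $P^h_t(1/h)$ rather than directly, and you make explicit the identification $\eta_\infty(f)=\pi^h(f/h)/\pi^h(1/h)$, which the paper leaves implicit.
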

\begin{proof}
First remark that since $h$ is an eigenfunction with bounded logarithm,
\[
e^{-\lambda t}\inf_E h  \leqslant e^{-\lambda t}h = Q^{V}_t(h) \leqslant \sup_{E} h Q^{V}_t(\one), 
\]
so that $e^{\lambda t}Q^{V}_t(\one)\geqslant c_-$. Similarly
\[
Q^{V}_t(\one) \leqslant (\inf_E h)^{-1}Q^{V}_t(h) \leqslant e^{-\lambda t}(\inf_E h)^{-1} \sup_E h,
\]
and hence~\eqref{eq:borne-semi-group} holds.

For the initial process $X$, the general property of (Lebesgue) irreducibility used in the classical literature follows from the existence of a Lebesgue density $p_t(x,y)$ for the probability transitions of the process; the fact that compact sets are petite (in the terminology of \cite{down1995exponential}) follows from the fact that at least on some time interval this density is bounded away from $0$ ($p_t(x,y) \geqslant \delta > 0$ for all $x,y$ in a compact set). The $h$-transform also admits a density $p^h$ with respect to the Lebesgue measure satisfying $p^h_t(x,y) \in \left [ h^{-1}(x)p_t(x,y)h(y)\ee^{t \inf V} , h^{-1}(x)p_t(x,y)h(y)\ee^{t \sup V} \right ]$. Now, because $\log h$ is bounded, $P^h_t$ inherits the property that compact sets are petite.

We can now apply condition $(\tilde{\mathcal D})$ of~\cite{down1995exponential} to $P^h$ with Lyapounov function $\phi h^{-1}$, which by construction of the $h$-transform belongs the domain of the generator of $P^h$, and thus satisfies:
\[
L^h(\phi h^{-1}) = h^{-1} L \phi - h^{-1} L(h) \leqslant - \delta h^{-1} \phi .
\]
Then, \cite[Theorem 5.2]{down1995exponential} yields that $P^h$ is exponentially uniformly exponentially ergodic, and since again $\log h$ is bounded we exactly get~\eqref{eq:conv-semi-group} with $w_1(t)$ a decreasing exponential and $\calN_1$ the supremum norm.
\end{proof}

\begin{lem}\label{lem:conv_ex_2}
Assume in addition to the assumptions in Lemma \ref{lem:conv_ex} that:
\begin{itemize}
    \item $\nabla \log h$ is bounded,
    \item for all $x,y \in \R^d \setminus K$ outside a compact set $K$,
\[
(x-y)\cdot \pa{b(x) - b(y)} \leqslant -\kappa |x-y|^2.
\]
\end{itemize}
Then, the carr\'e-du-champ bounds~\eqref{eq:born-carre-champ2}-\eqref{eq:born-carre-champ} holds, and hence the whole of Assumption~\ref{assu:general} is true.
\end{lem}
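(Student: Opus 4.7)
Since $L = \Delta + b\cdot\nabla$ is a diffusion operator, the carr\'e-du-champ is $\Gamma_L(g) = |\nabla g|^2$, so both~\eqref{eq:born-carre-champ2} and~\eqref{eq:born-carre-champ} amount to pointwise gradient bounds on $e^{\lambda t} Q^V_t(g)$, for $g = \one$ and $g = f - \eta_\infty(f)$ respectively. My plan is to transfer these bounds to the $h$-transformed semi-group $P^h$, for which a contractive coupling can be constructed, and then transfer back.

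First I would use the identity $Q^V_t(g) = e^{-\lambda t}\, h \cdot P^h_t(g/h)$ and differentiate in $x$ to write
\[
e^{\lambda t}\nabla Q^V_t(g)(x) = \nabla h(x)\, P^h_t(g/h)(x) + h(x)\, \nabla P^h_t(g/h)(x).
\]
Since Lemma~\ref{lem:conv_ex} provides $h$ bounded above and below by positive constants, and since the new hypothesis yields $\|\nabla h\|_\infty \leqslant \|h\|_\infty \|\nabla \log h\|_\infty < \infty$, the task reduces to sup-norm bounds on $P^h_t(\tilde g)$ (already furnished by Lemma~\ref{lem:conv_ex}) together with gradient bounds on $\nabla P^h_t(\tilde g)$, writing $\tilde g := g/h$.

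The key step will be a Lipschitz contraction estimate for $P^h$. Its drift is the bounded perturbation $b^h = b + 2\nabla \log h$ of $b$. I would first observe that, since the perturbation is $O(1)$ whereas the dissipative contribution from $b$ is $O(|x-y|^2)$, the inequality $(x-y)\cdot(b^h(x) - b^h(y)) \leqslant -\kappa' |x-y|^2$ is preserved for $|x-y|$ sufficiently large. I would then set up an Eberle-type reflection coupling of two copies $(X^{h,x}_t, X^{h,y}_t)$, which under these conditions yields an exponential Wasserstein contraction $W_1^\rho(\mathrm{law}(X^{h,x}_t),\mathrm{law}(X^{h,y}_t)) \leqslant C e^{-\alpha t} \rho(x,y)$ for a concave distance $\rho$ equivalent to the Euclidean one. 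Testing against Lipschitz functions then gives the gradient contraction
\[
\|\nabla P^h_t(\tilde g)\|_\infty \leqslant C e^{-\alpha t} \|\nabla \tilde g\|_\infty.
\]

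Applied with $g = \one$, so that $\tilde g = 1/h$ has bounded gradient (via $\|\nabla(1/h)\|_\infty \leqslant \|\nabla \log h\|_\infty / \inf h$), this yields~\eqref{eq:born-carre-champ2}. Applied with $g = f - \eta_\infty(f)$, so that $\|\nabla \tilde g\|_\infty \leqslant C(\|f\|_\infty + \|\nabla f\|_\infty)$ by a short computation using the bounds on $h$ and $\nabla h$, it produces~\eqref{eq:born-carre-champ} with $w_2$ a decreasing exponential (hence in $L^1 \cap L^2$) and $\mathcal N_2(f) := \|f\|_\infty + \|\nabla f\|_\infty$. The hard part will be making the reflection coupling rigorous when the perturbation $\nabla \log h$ is only bounded (not Lipschitz) and the underlying drift $b$ only locally Lipschitz; these points are standard in spirit but technically delicate, and would typically be handled by Zvonkin-type regularization or by weak-solution arguments together with an approximation procedure.
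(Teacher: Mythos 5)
Your proposal is correct and follows essentially the same route as the paper: reduce to gradient bounds for the $h$-transform $P^h$ via the identity $\nabla Q^V_t(g) = e^{-\lambda t} h \nabla P^h_t(g/h) + \frac{\nabla h}{h} Q^V_t(g)$, obtain an exponential Wasserstein contraction for $P^h$ from an Eberle-type reflection coupling (the paper cites this directly, noting the bounded perturbation $\nabla\log h$ preserves the asymptotic dissipativity hypothesis), convert it into $\|\nabla P^h_t \tilde g\|_\infty \leqslant C e^{-ct}\|\nabla \tilde g\|_\infty$ (the paper invokes the Kuwada-type duality result, you argue the easy direction by testing Lipschitz functions), and transfer back using the boundedness of $\log h$, $\nabla\log h$ and the sup-norm decay from Lemma~\ref{lem:conv_ex}. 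The only difference is that you sketch the coupling and its technical caveats yourself where the paper simply cites its references.
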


\begin{proof} 
The proof is based on an application of the results in~\cite{couplage-non-convexe} (based on coupling) to the $h$-transform $P^h$ in order to show that:
\begin{equation}\label{eq:W_1}
\mathcal W_1 ( \mu P^h_t , \nu P^h _t   ) \leqslant C\ee^{- c t} \mathcal W_1 ( \mu , \nu )
\end{equation}
for some constants $c > 0$ and $C$, and where $\mathcal W_1$ is the Euclidean Wasserstein distance. The Kantorovitch dual formulation of $\mathcal W_1$ enables to obtain convergence estimates on the gradient of diffusion semi-groups.

We now give more details. $P^h$ is a semi-group with generator
\[
L = \Delta + b\cdot \na + \na \ln(h)\cdot \na = \Delta + \tilde b\cdot \na.
\]
Here $\tilde b$ still satisfies the assumptions of~\cite{couplage-non-convexe} if $\nabla \log h$ is bounded, in particular we can apply Corollary $2$ (line $2$) and obtain the exponential convergence in Wasserstein distance~\eqref{eq:W_1}. \add{Using the Kantorovitch dual formulation of $\mathcal W_1$ as a Lipschitz operator norm $\mathcal W _1 (m) = \sup_{\phi} \frac{m(\phi)}{\norm{\phi}_{Lip}}$ (a general argument that holds on any Riemann manifold is~\cite[Theorem 2.2 (p=1)]{gradient-estimate})}, one gets that there exists $C,c>0$ such that for all $t\geqslant 0$ and $f$ with bounded gradient:
\begin{equation}\label{eq:grad_Ph}
\|\na P^h_t(f)\|_{\infty} \leqslant Ce^{-ct}\|\na f\|_{\infty}.
\end{equation}
Now, we have:
\[
Q^V_t(f) = e^{-\lambda t}h P^h(h^{-1}f),
\]
so that
\[
\na Q^V_t(f) = e^{-\lambda t} h \na P^h(h^{-1}f) + \frac{\na h}{h} Q^V_t(f).
\]
Since $\log h$ and $\nabla \log h$ are assumed bounded, we obtain using Lemma~\ref{lem:conv_ex} the \textit{carr\'e-du-champs} estimates \eqref{eq:born-carre-champ2}-\eqref{eq:born-carre-champ}.
\end{proof}

\add{Note that checking the uniform bound on $\log h$ and its gradient for a given pair $(L,V)$ may be uneasy in practice. By first prescribing $h$ and from it deducing $V$, it is however possible to generate a large class of diffusions in a non-compact setting like $\R^d$ satisfying our assumptions. We provide a typical example below.}

\begin{exa}
Let $d=1$, $b(x)  = - x^2$. Fix any $h:\R\to \R_+^*$ such that $\log h $ and its two first derivatives are bounded, and write $V = h^{-1} L h$. Then, $\lambda = 0$, and if $m=\max(-\inf_\R V,0)$, then
\[
\phi(x) = 2\po 1 - \frac{1 + m}{|x|}\pf \implies L\phi(x) \leqslant -2\po 1 + m\pf \leqslant -\phi \po 1 -  V \pf
\]
for $|x|$ great enough, and the assumptions from Lemma~\ref{lem:conv_ex} and~\ref{lem:conv_ex_2}, and thus Assumptions~\ref{assu:domain}-\ref{assu:general}, are satisfied.
\end{exa}

\begin{rem}[\add{Counter-example}] The existence of \emph{bounded} Lyapounov functions satisfying ~\eqref{eq:Lyap} for Euclidean diffusion with additive noise typically require a confining drift that grows strictly more than linearly at infinity, as in the above example. For instance if $V = 0$ and $\lambda =0$, and $L$ generates an Orstein-Uhlenbeck process (in one dimension $b(x) = - \mu x$ with $\mu > 0$), the large time convergence to equilibrium is not uniform in the initial condition. 
\end{rem}

\add{Our assumptions are also generically satisfied in the case of a diffusion on a compact manifold.
\begin{lem}\label{lem:compact_conv} Let $(X_t)_{t \geq 0}$ be an uniformly elliptic diffusion with Lipschitz coefficients on a compact differentiable manifold and $V$ be measurable and bounded, then Assumptions~\ref{assu:domain}-\ref{assu:general} are satisfied.
\end{lem}
\begin{proof} We give a sketch of a proof that follows the arguments in $\R^d$ above. In a compact setting, by standard Krein-Rutman arguments, there exists a principal eigenfunction $h$ which by elliptic regularization is positive and smooth (see \textit{e.g.} \cite{pinsky1995positive}). By compactness, $\log h$ and $\nabla \log h$ are bounded. 

Let us recall a very classical argument showing the Wasserstein exponential convergence \ref{eq:W_1} in the compact setting. By uniform ellipticity and compactness, the transition kernel admits a strictly positive smooth density. Hence a Doeblin minorization condition holds, and by standard results (e.g. Meyn and Tweedie), the semigroup is exponentially ergodic in total variation, in particular for all $0 \leq \delta t \leq t$:
\[
\norm{ \mu P^h_t , \nu P^h _t }_{tv} \leqslant C\ee^{- c (t-\delta t)} \norm{\mu P^h_{\delta t} , \nu P^h_{\delta t}  }_{tv}.
\]
For a bounded distance, the Riemannian $W_1$ norm is bounded by the total variation, and by elliptic regularization, on a compact manifold, $\norm{\mu P^h_{\delta t} ,\nu P^h_{\delta t} }_{tv} \leq c_{\delta t} \mathcal W _ 1 (\mu , \nu) $. This yields \ref{eq:W_1}. 

The rest follows exactly the proofs of Lemma~\ref{lem:conv_ex}-\ref{lem:conv_ex_2} which still hold on a general Riemannian space.
\end{proof}
}

\section{Proofs of the theorems}\label{sec:proof}

\subsection{Decompositions of the backward error process}

Fix $f\in \mathcal A$, $T>0$. We consider functions on $[0,T] \times \mathcal P$ defined by 
\begin{equation*}
\ph_t(\eta) := \Phi_{T-t}(\eta)(f) - \Phi_{T}(\eta_0^N)(f)
\end{equation*}
where $(\Phi_t)_{t \geqslant 0}$ is the mean-field measure-valued evolution semi-group~\eqref{eq:FK_flow}. Recall that $t \mapsto \eta^N_t$ denotes the empirical distribution of the particle system. The present work is based on the stochastic analysis of the backward error process $t \mapsto \ph_t(\eta^N_t)$ obtained by propagating the particle approximation over the time interval $[0,t]$, and then the limit evolution semi-group on the time interval $[t,T]$.

Recall that if $(x,t) \mapsto \psi_t(x)$ is a time-dependent function belonging to the domain of the generator $\partial_t + L$ of any time-homogenous Markov process $X$, then the process
\[
M_t = \psi_t(X_t) - \int_0^t (\partial_s + L) \psi_s ( X_s) \dd s
\]
is a martingale. If moreover $\psi_t^2$ is also in the domain, then the predictable (or conditional) quadratic variation denoted by $\langle M \rangle_t$ of the martingale, defined as the predictable compensator of the usual quadratic variation $\br{M}_t$, is given by
\begin{equation} \label{eq:pred_mart}
\langle M \rangle_t = \int_0^t 2 \Gamma_L(\psi_s)(X_s) \dd t .
\end{equation}
By definition $\langle M \rangle_t$ is the only predictable (in our case even differentiable with respect to time) increasing process such that $t \mapsto \br{M}_t- \langle M \rangle_t$ is a (pure jump) martingale. We refer to \cite[Chapter~3]{protter2005stochastic} as a classical reference for advanced stochastic calculus (possibly with jumps) covering semi-martingales, predictable projections,  \textit{et cetera}. In the present work we will only use the fact that using \eqref{eq:pred_mart}, the process $t \mapsto \br{M}_t- \langle M \rangle_t$  has finite variation, hence is a pure jump martingale. In the same way, if $\ee^{\psi_t}$ belongs to the domain of the generator $\partial_t+L$, the following process
\[
t \mapsto \exp\pa{ \psi_t(X_t)  -  \int_0^t \ee^{-\psi_s(X_s)} (L+\partial_s)(\ee^{\psi_s}) (X_s) \dd s}
\]
is a (so-called 'exponential') martingale.

We will denote the deviation of a test function $f\in\mathcal B(E)$ with respect to the semi-group~\eqref{eq:FK_flow} by
\begin{equation}\label{eq:fun-centered}
\overline f : = f - \Phi_{T}(\eta_0^N)(f).
\end{equation}
Recalling that $\mathcal P_N $ is the set of empirical distribution of $N$ particles, we will denote for all $\eta = \eta_{\x} \in \mathcal P_N$:
\[
\eta^{(i)} := \frac{1}{N} \sum_{j \neq i} \delta_{x_i},
\]
the measure of mass $\frac{N-1}{N}$ obtained by removing particle $i$.

The proofs of Theorem~\ref{thm:var-lower-bound-h} and~\ref{thm:exp-bound} rely on the following two results.
\begin{prop}\label{prop:mart-var} 
Suppose Assumption~\ref{assu:domain}. Let $T > 0$, $f\in \mathcal A$ be given, $V$ be bounded. Then the error process $\ph_t(\eta^N_t)$ has a (Doob-Meyer) decomposition of the form
\begin{equation}\label{eq:mart-var-with-rest}
\ph_t \po \eta_t^N \pf = M_t^{1,T} + \frac{1}{N}\int_0^t R\br{\ph_s } \pa{\eta^N_s} \dd s,
\end{equation}
where $ t \mapsto M^{1,T}_t$ is a martingale (for the underlying filtration) and the rest term  (the time derivative of the predictable finite variation part) satisfies for all $N\in \N$, $\eta\in\mathcal P_N$ and $t \in [0,T]$: 
\begin{multline*}
\left|R\br{\ph_t} \po \eta \pf \right| \leqslant \max_{1\leqslant i \leqslant N}\frac{4}{\po\eta^{(i)} \po Q^{V}_{T-t}\po \one \pf  \pf\pf^2} \\ \times \po  \sqrt{\eta \Gamma_{L_\eta}\po Q^{V}_{T-t}\po \overline f \pf\pf}\sqrt{\eta \Gamma_{L_\eta}\po Q^{V}_{T-t}\po \one \pf\pf} + \Phi_{T-t}(\eta)\po \overline f \pf\eta\Gamma_{L_\eta}\po  Q^{V}_{T-t}\po \one \pf \pf \pf.
\end{multline*}
Moreover, the quadratic variation $\left< M^{1,T} \right>$ of $M^{1,T}$ satisfies for $0\leqslant t \leqslant T$ 
\begin{multline}\label{eq:borne-var-quad}
N\left< M^{1,T} \right>_t \\ \leqslant  \int_0^t \max_{1\leqslant i \leqslant N}\frac{4}{\po\eta^{N,(i)}_t \po Q^{V}_{T-t}\po \one \pf \pf\pf^2}\po  5 \eta_s^N \Gamma_{L_{\eta^N_s}}\po Q^{V}_{T-s}\po \overline f \pf\pf + 8 \Phi^2_{T-s}(\eta_s^N)\po \overline f \pf \, \eta_s^N\Gamma_{L_{\eta^N_s}}\po Q^{V}_{T-s}\po \one \pf \pf \pf \dd s.
\end{multline}
\end{prop}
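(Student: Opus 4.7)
The plan is to apply Dynkin's formula (It\^{o}'s formula for Markov processes with possible jumps) to the time-dependent function $t \mapsto \ph_t(\eta_t^N)$, viewed as a function of the measure-valued Markov process $(\eta_t^N)_{t \geqslant 0}$ with generator $\mathcal{L}$ from~\eqref{eq:Lcal_def}. Assumption~$(\mathcal D)$ guarantees that $x_i \mapsto \ph_t(\eta_\x)$ and its square belong to $\mathcal{D}(L)$, so Dynkin's formula yields
\[
\ph_t(\eta_t^N) - \ph_0(\eta_0^N) = M_t^{1,T} + \int_0^t (\partial_s + \mathcal L)\ph_s(\eta_s^N)\,\dd s,
\]
where $M^{1,T}$ is a local martingale with predictable quadratic variation $\int_0^t 2\,\Gamma_{\mathcal L}(\ph_s)(\eta_s^N)\,\dd s$. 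Since $\ph_0(\eta_0^N) = \Phi_T(\eta_0^N)(f) - \Phi_T(\eta_0^N)(f) = 0$, the proposition reduces to showing that $(\partial_s + \mathcal L)\ph_s(\eta) = R[\ph_s](\eta)/N$ with the stated bound, and to controlling $\Gamma_{\mathcal L}(\ph_s)$.

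For the drift, write $\ph_t(\eta) = \eta F/\eta G$ with $F := Q^V_{T-t}(\overline f)$ and $G := Q^V_{T-t}(\one)$, and compute $\partial_t \ph_t(\eta)$ directly using $\partial_t F = -(L-V)F$ and $\partial_t G = -(L-V)G$. For $\mathcal L\ph_t(\eta_\x) = \sum_{i=1}^N L_{\eta_\x}(x_i \mapsto \ph_t(\eta_\x))$, view $x_i \mapsto \ph_t(\eta_\x)$ as $H(\eta_\x^{(i)} F + F(x_i)/N,\, \eta_\x^{(i)} G + G(x_i)/N)$ with $H(a,b) = a/b$, and decompose $L_{\eta_\x} = L + S_{\eta_\x}$. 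The crucial observation is that the $O(1)$ part of $(\partial_t + \mathcal L)\ph_t(\eta)$ precisely matches the Kolmogorov equation~\eqref{eq:Kolmo} applied to $\eta \mapsto \Phi_{T-t}(\eta)(\overline f) = \ph_t(\eta)$, and so cancels identically; only the $1/N$ correction survives, coming from the second-order term in the expansion of $H$. Exploiting the exact identity $H(a+\alpha, b+\beta) - H(a,b) = (\alpha - H(a,b)\beta)/(b+\beta)$ together with $G \geqslant 0$ (which yields $b + \beta \geqslant \eta_\x^{(i)} G$, a lower bound invariant under a jump of particle $i$), then applying Cauchy--Schwarz on the bilinear carré-du-champ $\Gamma_{L_\eta}(F,G)$ (producing the square-root product) and identifying $\eta F/\eta G = \Phi_{T-t}(\eta)(\overline f)$ in the coefficient of $\Gamma_{L_\eta}(G)$ yields the stated bound on $|R[\ph_t](\eta)|$.

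For the quadratic variation, the sum-of-single-particle-generators structure of $\mathcal{L}$ gives $\Gamma_{\mathcal L}(\ph_t)(\eta_\x) = \sum_{i=1}^N \Gamma_{L_{\eta_\x}}(x_i \mapsto \ph_t(\eta_\x))$. The same exact identity for the increment of $H$ shows that a variation of $x_i$ changes $\ph_t(\eta_\x)$ by a quantity of order $(F(x_i') - F(x_i) - \ph_t\,(G(x_i') - G(x_i)))/(N(\eta_\x^{(i)} G + G(x_i')/N))$, so each single-particle carré-du-champ is bounded by $\tfrac{1}{N^2 (\eta_\x^{(i)} G)^2}\Gamma_{L_{\eta_\x}}(F - \ph_t G)(x_i)$, up to numerical constants. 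Summing over $i$ produces the overall $1/N$ rate, and applying Minkowski~\eqref{eq:minskovki} in the form $\Gamma_{L_\eta}(F - \ph_t G) \leqslant 2\Gamma_{L_\eta}(F) + 2\ph_t^2 \Gamma_{L_\eta}(G)$, while tracking the numerical factors coming from the second-order residue of $H$, yields~\eqref{eq:borne-var-quad}.

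The main obstacle is the non-local jump operator $S_\eta$: unlike the diffusion-type $L$, it does not satisfy a differential chain rule, so the $1/N$ expansion of
\[
S_{\eta_\x}^{(i)}\ph_t(\eta_\x) = \int \br{\ph_t\pa{\eta_\x + \tfrac{1}{N}(\delta_{x'} - \delta_{x_i})} - \ph_t(\eta_\x)}\, K_{\eta_\x}(x_i, \dd x')
\]
requires a direct analysis of the integrand via exact second-order residues for $H(a,b) = a/b$, rather than a Taylor--It\^{o} chain rule. The boundedness assumptions on $V$ and on $K_\eta$ from Assumption~$(\mathcal U)$ are essential for lower-bounding the denominator $b + G(\cdot)/N$ uniformly in the jump target by $\eta_\x^{(i)} G$, which in turn produces the $\max_i 4/(\eta^{(i)} G)^2$ prefactor appearing in both bounds.
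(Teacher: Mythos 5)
Your proposal is correct and, for the decomposition and the bound on the rest term, it is essentially the paper's own argument: Dynkin's formula for the measure-valued process, cancellation of the $O(1)$ part against the Kolmogorov equation~\eqref{eq:Kolmo}, and control of the surviving $1/N$ residue of the ratio $\eta\mapsto \eta Q^V_{T-t}(\overline f)/\eta Q^V_{T-t}(\one)$; your exact identity for $H(a,b)=a/b$ is precisely what the paper encodes through the flat and discrete (particle) derivatives (Lemmas~\ref{lem:disc-derivative-flow} and~\ref{lem:Lcal_gen}), with the same Cauchy--Schwarz step and the same denominator bound $\tilde\eta\, Q^V_{T-t}(\one)\geqslant \eta^{(i)}Q^V_{T-t}(\one)$. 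Where you genuinely depart from the paper is the quadratic variation: the paper writes $\langle M^{1,T}\rangle$ through $R\br{\ph^2_s}-2\ph_s R\br{\ph_s}$, which forces it to expand $\ph^2_t$ as well (the discrete chain rule~\eqref{eq:chain-rule}) and produces the constants $5$ and $8$ in~\eqref{eq:borne-var-quad}; you instead bound $\Gamma_{\mathcal L}(\ph_t)$ directly as the sum of single-particle \textit{carr\'e-du-champs} and dominate each increment by that of $\po Q^V_{T-t}(\overline f)-\ph_t\, Q^V_{T-t}(\one)\pf/\po N\eta^{(i)}Q^V_{T-t}(\one)\pf$ (justified, for a general generator, by the infinitesimal second-moment characterization of Lemma~\ref{lem:carre-du-champ-as-derivatives}), which after Minkowski gives the same bound with constants $1$ and $1$ in place of $5$ and $8$, hence implies~\eqref{eq:borne-var-quad} and is arguably cleaner. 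One small correction: the lower bound on the denominator only uses $Q^V_{T-t}(\one)\geqslant 0$, which follows from $V$ being bounded; neither the boundedness of $K_\eta$ nor Assumption~$(\mathcal U)$ (which the proposition does not assume) is needed for that step --- boundedness of $K_\eta$ only enters in making the particle generator and $\Gamma_{S_\eta}$ well defined.
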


\begin{rem} Some remarks.

\begin{itemize}
\item By construction of the martingale $M^{1,T}_t$, the rest term $R\br{\ph }(\eta)$ is equal to 
\[
R\br{\ph_t}(\eta) = N \pa{\partial_t + \mathcal L } \ph_t(\eta).
\]
In the proof of Proposition~\ref{prop:mart-var} we will use the fact that when $\mathcal L$ is a mean-field particle generator, this rest term (which depends on $N$) is of order $\mathcal O_N(1)$.

\item As will be computed in the proof, the predictable quadratic variation is in fact equal to
\[
\left< M^{1,T} \right>_t = \int_0^t 2\Gamma_{\mathcal L}(\ph_s)(\eta^N_s) \dd s = \frac1N \int_0^t R\br{\ph_s^2}(\eta_s^N) - 2 \ph_s(\eta_s^N) R\br{\ph_s}(\eta_s^N) \dd s .
\]
where the first identity is the usual formula for martingales constructed out of functions (in the generator domain) of Markov processes. Here, \[
R\br{\ph_t^2}(\eta) = N \pa{\partial_t + \mathcal L } \ph_t^2(\eta)
\]
is the integrand of the finite variation part from the Doob-Meyer decomposition of the square error process $\ph_t^2(\eta_t^N)$.
\item The content of Proposition~\ref{prop:mart-var} is thus to deliver upper bounds on these rest terms in the specific Feynman-Kac case where the non-linear flow $\Phi_{T-t}$ is explicit.
\item As explained in Remark~\ref{rem:domain}, the proof is carried out when $\ph$ and $\ph^2$ belong to the domain of the generator $\mathcal L$ of the particle system (which follows from Assumption~\ref{assu:domain}). As already explained in Remark~\ref{rem:domain}, the result certainly still holds in general. A rigorous general statement would require the use of super- and sub-martingales instead of martingales.
\end{itemize}
\end{rem}
A similar result holds in the exponential case. In order to state it, we define for $f\in\mathcal D\po L_\eta\pf$ such that ${\rm e}^{f}\in\mathcal D\po L_\eta\pf$, $\eta\in\mathcal P$, the exponential \textit{carr\'e du champ} by
\[
\Gamma_{\eta}^{\rm exp}(f) := {\rm e}^{-f}L_{\eta}( {\rm e}^{f})-L_{\eta}(f).
\]
We also need the notion of flat derivative of measure-valued test function $\ph_t(\eta)$ with respect to a probability measure $\eta$, that we will precisely define in Section~\ref{s-sec:derivative}. For now, we simply need that this derivative is a function $\frac{\delta \ph }{ \delta \eta}\in\mathcal B(\mathcal P\times E)$ satisfying usual chain rules. 
If $\Gamma$ is either a \textit{carr\'e du champ} or an exponential \textit{carr\'e du champ}, we use the notation:
\[
\eta\Gamma\po \frac{\delta \ph }{ \delta \eta} \pf = \int_E \Gamma\po \frac{\delta \ph }{ \delta \eta}(\eta,\cdot) \pf(x) \eta(\dd x).
\]

\begin{prop}\label{prop:mart-exp} 
Suppose Assumption~\ref{assu:domain}, as well as Condition~\eqref{eq:borne-semi-group}. Let $T > 0$, $f\in \mathcal A$ be given.
Then the exponential of $N$ times the error process has a decomposition of the form
\begin{equation}\label{eq:mart-exp-with-rest}
\exp\po N\ph_t\po\eta_t^N \pf \pf =  \exp\po N\int_0^t \left[ \eta_s^N \Gamma_{L_{\eta^N_s}}^{\rm exp}\left(\frac{\delta \ph_s}{ \delta \eta}\right) + \frac{1}{N} R^{\rm exp}\br{\ph_s} \po \eta_s^N \pf \right]\dd s \pf  \times M_t^{\rm exp,T}
\end{equation}
where $t \mapsto  M_t^{\rm exp,T}$ is a martingale. In the above, for all $\eta\in\mathcal P_N$, $0\leqslant t \leqslant T$, the exponential \textit{carr\'e du champ} satisfies:
\[
\eta\Gamma_{\eta}^{\rm exp}\left(\frac{\delta \ph_t}{ \delta \eta}\right) \leqslant \frac{e^{C\|f\|_{\infty}}}{\po\eta \po Q^V_{T-t}(\one)  \pf\pf^2}\po  \eta \Gamma_{L_\eta}\po Q^V_{T-t}\po \overline f \pf\pf + \Phi_{T-t}(\eta)\po \overline f \pf^2\eta\Gamma_{L_\eta}\po Q^V_{T-t}(\one) \pf \pf;
\]
and the rest term satisfies 
\begin{multline*}
\left| R^{\rm exp}\br{\ph_t}(\eta) \right| \\ \leqslant \max_{1\leqslant i \leqslant N} \frac{2\ee^{C\|f\|_{\infty}}}{\po\eta^{(i)} \po Q^V_{T-t}(\one) \pf\pf^2} \po  \sqrt{\eta \Gamma_{L_\eta}\po Q^V_{T-t}\po \overline f \pf\pf}\sqrt{\eta \Gamma_{L_\eta}\po Q^V_{T-t}(\one)\pf} + \Phi_{T-t}(\eta)\po \overline f \pf\Gamma_{L_\eta}\po Q^V_{T-t}(\one) \pf \pf,
\end{multline*}
for some $C$ that depends only on the constants $c_-,C_+$ from Condition~\eqref{eq:borne-semi-group}.
\end{prop}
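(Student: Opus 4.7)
The strategy mirrors the proof of Proposition~\ref{prop:mart-var}, replacing the additive Doob--Meyer decomposition by its exponential counterpart. Under Assumption~\ref{assu:domain}, $e^{N\ph_t(\cdot)}$ lies in the domain of the time-inhomogeneous generator $\partial_t + \mathcal{L}$ of the measure-valued process $t\mapsto\eta^N_t$, so the standard exponential martingale formula gives that
\[
M^{\rm exp,T}_t := \exp\pa{ N\ph_t(\eta^N_t) - \int_0^t e^{-N\ph_s(\eta^N_s)} (\partial_s + \mathcal{L})(e^{N\ph_s})(\eta^N_s)\dd s }
\]
is a martingale. Matching this with~\eqref{eq:mart-exp-with-rest} reduces the task to writing
\[
e^{-N\ph_t(\eta)}(\partial_t+\mathcal{L})(e^{N\ph_t})(\eta) = N\,\eta\,\Gamma^{\rm exp}_{L_\eta}\!\pa{ \frac{\delta\ph_t}{\delta\eta} } + R^{\rm exp}[\ph_t](\eta),
\]
which defines $R^{\rm exp}[\ph_t]$, and then bounding each piece.

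The flat derivative can be computed explicitly from $\Phi_{T-t}(\eta)(f) = \eta Q^V_{T-t}(f)/\eta Q^V_{T-t}(\one)$, giving
\[
\frac{\delta\ph_t}{\delta\eta}(\eta,x) = \frac{Q^V_{T-t}(\bar f)(x) - \Phi_{T-t}(\eta)(\bar f)\,Q^V_{T-t}(\one)(x)}{\eta Q^V_{T-t}(\one)},
\]
with $\bar f$ as in~\eqref{eq:fun-centered}. The bound~\eqref{eq:borne-semi-group} yields $\eta Q^V_{T-t}(\one)\geqslant c_- e^{-\lambda t}$ while $\|\bar f\|_\infty\leqslant 2\|f\|_\infty$, so $\|\delta\ph_t/\delta\eta\|_\infty$ is bounded by a constant multiple of $\|f\|_\infty$; this is the source of the prefactor $e^{C\|f\|_\infty}$. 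To effect the splitting above, I would Taylor expand in each single-particle transition: the non-jump part of $L_\eta$ gives by the chain rule an $L_\eta$-derivative of $\delta\ph_t/\delta\eta$ plus a quadratic contribution, while for the jump part a resampling changes $\eta_\x$ by $(\delta_{x'}-\delta_x)/N$, so
\[
N\br{ \ph_t(\eta_\x - \tfrac{1}{N}\delta_x + \tfrac{1}{N}\delta_{x'}) - \ph_t(\eta_\x) } = \frac{\delta\ph_t}{\delta\eta}(\eta_\x,x') - \frac{\delta\ph_t}{\delta\eta}(\eta_\x,x) + \mathcal{O}(1/N).
\]
Summing the exponential jump contributions produces $N\,\eta\Gamma^{\rm exp}_{L_\eta}(\delta\ph_t/\delta\eta)$ at leading order, while the Kolmogorov equation~\eqref{eq:Kolmo} exactly kills the residual $\mathcal{O}(N)$ drift (as in Proposition~\ref{prop:mart-var}), leaving only the $\mathcal{O}(1)$ remainder $R^{\rm exp}[\ph_t]$.

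The stated bounds then follow from~\eqref{eq:minskovki} applied to the decomposition of $\delta\ph_t/\delta\eta$ into contributions from $Q^V_{T-t}(\bar f)$ and $Q^V_{T-t}(\one)$, combined with the elementary inequality $\Gamma^{\rm exp}_{L_\eta}(g)\leqslant e^{2\|g\|_\infty}\Gamma_{L_\eta}(g)$ (which follows from $|e^a-1-a|\leqslant \tfrac12 a^2 e^{|a|}$), converting the exponential carré du champ into the ordinary one and producing the prefactor. The main obstacle is the quantitative control of the subleading terms in the exponential expansion: unlike the additive case, one must estimate all higher-order corrections of the form $e^\Delta-1-\Delta-\tfrac12\Delta^2$ uniformly over single-particle transitions and over $N$, while keeping the $L^\infty$-exponential factor $e^{C\|f\|_\infty}$ under control so that the final constants depend only on $c_-$, $C_+$ and $\|f\|_\infty$, and not on $N$.
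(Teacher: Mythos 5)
Your overall architecture coincides with the paper's: the exponential martingale \eqref{eq:mart-exp}, cancellation of the $O(N)$ drift $N\,\eta L_\eta\po\frac{\delta \ph_t}{\delta\eta}\pf$ against $\partial_t \ee^{N\ph_t}$ via the evolution equation \eqref{eq:Kolmo}, the explicit flat derivative of $\eta\mapsto\Phi_{T-t}(\eta)\po\overline f\pf$, the uniform bound $\norm{\frac{\delta\ph_t}{\delta\eta}}_\infty\leqslant C\norm{f}_\infty$ coming from \eqref{eq:borne-semi-group}, and the comparison of the exponential carr\'e du champ with the ordinary one through $\ee^{\theta}-1-\theta\leqslant \frac{\theta^2}{2}\ee^{|\theta|}$; this is exactly how the paper obtains the first displayed bound (Lemma~\ref{lem:bound-flat-derivative}).

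There is, however, a genuine gap: the second assertion of the proposition, the quantitative bound on $R^{\rm exp}\br{\ph_t}$, is precisely what you leave unresolved when you call it ``the main obstacle,'' and your description of it as higher-order corrections of the form $\ee^{\Delta}-1-\Delta-\tfrac12\Delta^2$ misidentifies the object: no second-order Taylor expansion of the exponential is ever needed, since the full quantity $\ee^{\Delta}-1-\Delta$ (with $\Delta$ the flat-derivative increment) is retained as the exponential carr\'e du champ. The true remainder is the discrete-versus-flat discrepancy in the exponent: with $\nu=\delta_{Y^i_u}-\delta_{x_i}$, the paper writes
\[
\ee^{\frac{\delta_N\ph_t}{\delta_N\eta}(\eta)(\nu)}-\ee^{\frac{\delta\ph_t}{\delta\eta}(\eta)(\nu)}
=\frac{1}{2N}\,\frac{\delta^2_N\ph_t}{\delta_N\eta^2}(\eta)(\nu)\,\kappa,
\qquad 0\leqslant\kappa\leqslant \ee^{C\norm{f}_\infty},
\]
where the bound on $\kappa$ uses the uniform estimates on $\frac{\delta\ph_t}{\delta\eta}$ and $\frac{\delta^2_N\ph_t}{\delta_N\eta^2}$ of Lemma~\ref{lem:bounded-exp-norm-inf} (this is where Condition~\eqref{eq:borne-semi-group} enters a second time). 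To reach the stated bound one then needs (i) the explicit second-order discrete derivative estimate of Lemma~\ref{lem:disc-derivative-flow}, which is also the source of the particle-removed measure $\eta^{(i)}$ in the prefactor (the perturbed measure $\eta+\nu/N$ only dominates $\eta^{(i)}$, not $\eta$), and (ii) the identification of $\limsup_{u\to0}\frac1u\,\E\br{\cdot}$ with carr\'e-du-champ terms via Lemma~\ref{lem:carre-du-champ-as-derivatives}, combined with Cauchy--Schwarz, to produce the structure $\sqrt{\eta\Gamma_{L_\eta}\po Q^V_{T-t}\po\overline f\pf\pf}\sqrt{\eta\Gamma_{L_\eta}\po Q^V_{T-t}(\one)\pf}+\Phi_{T-t}(\eta)\po\overline f\pf\,\Gamma_{L_\eta}\po Q^V_{T-t}(\one)\pf$. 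None of these steps is carried out, or even sketched, in your proposal, so half of the proposition is asserted rather than proved; the Minkowski-inequality argument you invoke only yields the $\Gamma^{\rm exp}$ bound, not the bound on $R^{\rm exp}$.
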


\begin{rem} As above:
\begin{itemize}
    \item The martingale in Proposition~\ref{prop:mart-exp} is the usual exponential martingale associated with Markov processes and the sum of  
    the exponential carr\'e-du-champs term and of the rest term are given by (for any $(t,\eta) \mapsto \ph_t(\eta)$) \[
    \ee^{-N\ph_t(\eta)}\pa{   \partial_t + \mathcal L } \br{ \ee^{N\ph_t} } (\eta) = \eta\Gamma_{\eta}^{\rm exp}\left(\frac{\delta \ph_t}{ \delta \eta}\right) + \frac{1}{N} R^{\rm exp}\br{\ph_t}(\eta) .
    \]

\item The term $\Gamma_{\eta}^{\rm exp}\left(\frac{\delta \ph}{ \delta \eta}\right)$ is the order $0$ (in $N$) term that does not depend on $N$. It comes from the mean-field structure only. Very importantly in order to obtain our exponential concentration result, the exponential carr\'e-du-champs gives back the usual carr\'e-du-champs for small $\ph$ ($\Gamma_{\eta}^{\rm exp}\left(\frac{\delta \ph}{ \delta \eta}\right) \simeq \Gamma_{L_\eta}\left(\frac{\delta \ph}{ \delta \eta}\right) $ when $\ph \to 0$) so that it scales \emph{quadratically} in $\ph$ for $\ph$ small.
\item The term $R^{\rm exp}_t\br{\ph }(\eta)$ depends on $N$ but is of order $\mathcal O_N(1)$ and also comes from the mean-field structure. \item Again, the proof is carried out when $\ee^{N\ph}$ belongs to the domain of the generator $\mathcal L$ of the underlying process (which follows from Assumption~\ref{assu:domain}, but the result certainly still holds in general by replacing the terms of the right hand side in~\eqref{eq:mart-exp-with-rest} by their bounds and stating that $M^{\rm exp,T}$ is a super-martingale.
\end{itemize}
\end{rem}

The proofs of those two propositions will be the subject of section~\ref{sec:weak-back-error-anal}. Using both, we will prove Theorem~\ref{thm:var-lower-bound-h} in Section~\ref{s-sec:proof-thm2-1} in the case of the bias and the variance, in Section~\ref{s-sec:proof-thm2-2} for the $L^p$ error, and Theorem~\ref{thm:exp-bound} in Section~\ref{s-sec:proof-thm3}, by providing bounds for the rests $R$ and $R^{\rm exp}$ in Section~\ref{sec:bound-error}.

\subsection{A general bound on the rest terms}\label{sec:bound-error}

This section is dedicated to the proof of a general bound on the already provided bounds on $R$, $R^{\rm exp}$ and $\Gamma^{\rm exp}$, using the convergences from Assumption~\ref{assu:general}. This general bound in given in the following lemma.

\begin{lem}\label{lem:bound-general}
Recall the definition of $\overline f$ from~\eqref{eq:fun-centered}. Under Assumption~\ref{assu:general}, there exists $C>0$ such that for all $\eta\in \mathcal P$, $0\leqslant t\leqslant T$,
\begin{equation*}
\sqrt{\eta \Gamma_{L_\eta}\po Q^V_{t}\po \overline f \pf\pf}\sqrt{\eta \Gamma_{L_\eta}\po Q^V_{t}(\one)\pf} + \Phi_{t}(\eta)\po \overline f \pf\Gamma_{L_\eta}\po Q^V_{t}(\one) \pf \leqslant Ce^{-2\lambda t}\po w_1(t)\mathcal N_1(f) + w_2(t)\mathcal N_2(f) \pf,
\end{equation*}
and
\begin{equation*}
\eta \Gamma_{L_\eta}\po Q^V_{t}\po \overline f \pf\pf + \Phi_{t}(\eta)\po \overline f \pf^2\eta\Gamma_{L_\eta}\po Q^V_{t}(\one) \pf \leqslant Ce^{-2\lambda t} \po w_1(t)^2\mathcal N_1(f)^2 + w_2(t)^2\mathcal N_2(f)^2 \pf.
\end{equation*}
\end{lem}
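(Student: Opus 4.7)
The overall strategy is to reduce every quantity to the centered expressions controlled by Assumption~\ref{assu:general}, via the two elementary identities
\[
\overline f = (f - \eta_\infty(f)) + c_T \one, \qquad \Gamma_{L_\eta} = \Gamma_L + \Gamma_{S_\eta},
\]
where $c_T := \eta_\infty(f) - \Phi_T(\eta_0^N)(f)$ is a constant and the second identity follows from the bilinearity of the \textit{carr\'e-du-champ} and the splitting $L_\eta = L + S_\eta$. The constant $c_T$ is controlled uniformly by the explicit formula $\Phi_T(\eta_0^N)(f) - \eta_\infty(f) = \eta_0^N Q^V_T(f-\eta_\infty(f)) / \eta_0^N Q^V_T(\one)$ combined with~\eqref{eq:borne-semi-group} and~\eqref{eq:conv-semi-group}, yielding $|c_T| \leqslant w_1(T) \mathcal N_1(f)/c_-$; since $w_1$ is non-increasing and $t \leqslant T$, we can replace $w_1(T)$ by $w_1(t)$.

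By the linearity of $Q^V_t$, we have $Q^V_t(\overline f) = Q^V_t(f-\eta_\infty(f)) + c_T Q^V_t(\one)$. The Minkowski inequality $\sqrt{\Gamma_L(a+b)} \leqslant \sqrt{\Gamma_L(a)} + \sqrt{\Gamma_L(b)}$ (a consequence of~\eqref{eq:minskovki} applied to the bilinear form $\Gamma_L$), combined with~\eqref{eq:born-carre-champ} and~\eqref{eq:born-carre-champ2}, then gives
\[
\sqrt{\Gamma_L(Q^V_t(\overline f))} \leqslant C e^{-\lambda t}(w_1(t) \mathcal N_1(f) + w_2(t) \mathcal N_2(f)).
\]
For the $S_\eta$ contribution, the pointwise estimate $\Gamma_{S_\eta}(g)(x) \leqslant 2 K_\ast \|g\|_\infty^2$ reduces everything to a sup-norm bound on $Q^V_t(\overline f)$ and on $Q^V_t(\one)$; both follow directly from~\eqref{eq:borne-semi-group} and~\eqref{eq:conv-semi-group}, giving $\|Q^V_t(\overline f)\|_\infty \leqslant Ce^{-\lambda t} w_1(t) \mathcal N_1(f)$ and $\|Q^V_t(\one)\|_\infty \leqslant C_+ e^{-\lambda t}$. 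Combining, one gets the two pointwise bounds
\[
\sqrt{\Gamma_{L_\eta}(Q^V_t(\overline f))} \leqslant C e^{-\lambda t}(w_1(t)\mathcal N_1(f) + w_2(t) \mathcal N_2(f)), \qquad \sqrt{\Gamma_{L_\eta}(Q^V_t(\one))} \leqslant C e^{-\lambda t},
\]
uniformly in $x$ and $\eta$.

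The coefficient $\Phi_t(\eta)(\overline f)$ is then bounded by decomposing it as $[\Phi_t(\eta)(f) - \eta_\infty(f)] - [\Phi_T(\eta_0^N)(f) - \eta_\infty(f)]$ and re-running the argument used for $c_T$ with $t$ in place of $T$, giving $|\Phi_t(\eta)(\overline f)| \leqslant 2 w_1(t) \mathcal N_1(f)/c_-$. The first claim of the lemma then follows by taking the product and sum of the above estimates (using $\eta\Gamma \leqslant \|\Gamma\|_\infty$ where needed for the $\Gamma_{L_\eta}(Q^V_t(\one))$ factor of the second term); the second claim follows by squaring the $\sqrt{\Gamma_{L_\eta}}$ bounds and using $(a+b)^2 \leqslant 2a^2+2b^2$ to split the mixed term. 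No step is genuinely difficult; the only mild subtlety is ensuring that the constant $c_T$, which naturally carries a factor $w_1(T)$, can be absorbed into the $w_1(t)$ estimate, which works precisely because $w_1$ is non-increasing and $t \leqslant T$ by assumption.
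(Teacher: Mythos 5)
Your proof is correct and follows essentially the same route as the paper's: the same decomposition $Q^V_t(\overline f)=Q^V_t(f-\eta_\infty(f))+\bigl(\eta_\infty(f)-\Phi_T(\eta_0^N)(f)\bigr)Q^V_t(\one)$, the same splitting $\Gamma_{L_\eta}=\Gamma_L+\Gamma_{S_\eta}$ with the jump part controlled through sup-norm bounds and~\eqref{eq:bound-jump-kernel}, and the same use of $w_1(T)\leqslant w_1(t)$. The only cosmetic difference is that you use the square-root (triangle) form of the Minkowski inequality for $\Gamma_L$ where the paper uses $\Gamma(a+b)\leqslant 2\Gamma(a)+2\Gamma(b)$, which changes nothing beyond constants.
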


\begin{proof}
First notice that, using Condition~\eqref{eq:borne-semi-group} and~\eqref{eq:conv-semi-group} of Assumption~\ref{assu:general}, we have
\begin{equation}\label{eq:conv-non-linear-flow}
\left|\Phi_{t}(\eta)(f) - \eta_{\infty}(f) \right| = \po e^{\lambda t}\eta Q^V_{t}(\one)\pf^{-1} \left| e^{\lambda t}\eta Q^V_t\po f-\eta_\infty(f) \pf\right| \leqslant Cw_1(t)\mathcal N_1(f).
\end{equation}
We decompose the \textit{carr\'e du champ} as:
\[
\Gamma_{L_{\eta}}(Q^V_{t}\po \overline f \pf) = \Gamma_L(Q^V_{t}\po \overline f \pf) + \Gamma_{S_{\eta}}(Q^V_{t}\po \overline f \pf).
\]
The \textit{carr\'e du champ} is a bilinear form. Hence, for $G \in \left\{L,S_\eta\right\}$, Minskowski inequality~\eqref{eq:minskovki} yields
\begin{multline*}
\Gamma_G\po Q^V_{t}\po \overline f \pf\pf = \Gamma_G\left[ Q^V_{t}(f - \eta_{\infty}(f)) + \po\eta_{\infty}(f) - \Phi_{t}(\eta_0^N)(f)\pf Q^V_{t}(\one)\right] \\\leqslant 2\Gamma_G\po Q^V_{t}(f - \eta_{\infty}(f))\pf + 2\po \Phi_{t}(\eta_0^N)(f) - \eta_{\infty}(f)\pf^2 \Gamma_G\po Q^V_{t}(\one)\pf.
\end{multline*}
First, Condition~\eqref{eq:born-carre-champ} gives
\begin{equation*}
\Gamma_L\po Q^V_{t}(f - \eta_{\infty}(f))\pf \leqslant e^{-2\lambda t}w_2(t)^2\mathcal N_2(f)^2,
\end{equation*}
and Inequality~\eqref{eq:conv-non-linear-flow} yields that there exists $C>0$ such that for all $f\in \mathcal A$
\[
\po \Phi_{t}(\eta_0^N)(f) - \eta_{\infty}(f)\pf^2 \leqslant Cw_1(t)^2 \mathcal N_1(f)^2.
\]
The last two inequalities put together with Condition~\eqref{eq:born-carre-champ2} yield that there exists $C>0$ such that
\begin{equation}\label{eq:borne-carre-champ-M}
\Gamma_L(Q^V_{t}\po \overline f \pf) \leqslant Ce^{-2\lambda t} \left[ w_1(t)^2 \mathcal N_1(f)^2 + w_2(t)^2\mathcal N_2(f)^2  \right].
\end{equation}

To get a bound on $\Gamma_{S_\eta}(Q^V_{T-s}\po \overline f \pf)$, we use the bound on the jump kernel~\eqref{eq:bound-jump-kernel}, and write
\begin{align*}
\eta\Gamma_{S_{\eta}}(Q^V_{t}(f-\eta_\infty(f))) &= \int_{E^2} \po Q^V_{t}(f-\eta_\infty(f))(x')-Q^V_{t}(f-\eta_\infty(f))(x)\pf^2 K_\eta(x,\dd x')\eta(\dd x)  \\ &\leqslant 4K_\ast e^{-2\lambda t} \sup_{E} \left| e^{\lambda t}Q^V_t(f- \eta_\infty(f)) \right|^2\\ &\leqslant 4K_\ast e^{-2\lambda t}w_1(t)^2\mathcal N_1(f)^2,
\end{align*}
where we used Condition~\eqref{eq:conv-semi-group} in the last inequality. Similarly, Condition~\eqref{eq:borne-semi-group} yields that
\[
\eta\Gamma_{S_\eta}\po Q^V_{t}(\one)\pf \leqslant Ce^{-2\lambda t},
\]
so that
\begin{equation*}
\eta\Gamma_{S_\eta}(Q^V_{t}\po \overline f \pf) \leqslant Ce^{-2\lambda t}w_1(t)^2 \mathcal N_1(f)^2,
\end{equation*}
and finally
\begin{equation*}
\eta\Gamma_\eta(Q^V_{t}\po \overline f \pf) \\\leqslant Ce^{-2\lambda t}  w_1(t)^2 \mathcal N_1(f)^2 + Ce^{-2\lambda t}w_2(t)^2\mathcal N_2(f)^2.
\end{equation*}
Additionally, we have
\begin{multline*}
\left|\Phi_{t}(\eta)\po \overline f \pf\right| \leqslant \left| \Phi_{t}(\eta)(f) - \eta_\infty(f)\right| + \left| \Phi_{T}(\eta^N_0)(f) - \eta_\infty(f)\right| \\\ \leqslant C\po w_1(t) + w_1(T) \pf\po \mathcal N_1(f)\pf \leqslant 2Cw_1(t)\po \mathcal N_1(f)\pf.
\end{multline*}
Putting everything together concludes the proof of both inequalities.
\end{proof}

\subsection{Proof of Theorem~\ref{thm:var-lower-bound-h}: the case \texorpdfstring{$p=1,2$}{p12}}\label{s-sec:proof-thm2-1}

We are now in position to prove Theorem~\ref{thm:var-lower-bound-h}, by providing an upper bound for the rest 
\[\int_0^T R_{s}\br{\ph_s } \pa{\eta^N_s} \dd s,\] 
which is independent of $T$ for the bias, and one for the quadratic variation for the variance.

\begin{proof}[Proof of Theorem~\ref{thm:var-lower-bound-h} (bias case $p=1$)]
Fix $f\in \mathcal A$, $T>0$, and recall that 
\[
\ph_t(\eta) = \po \Phi_{T-t}(\eta)(f) - \Phi_{T}(\eta_0^N)(f) \pf,
\]
and the definition of $M^{1,T}$ from Equation~\eqref{eq:mart-var-with-rest}.
Because $M^{1,T}$ is a martingale, $\E\po M_T^{1,T}\pf =0$, and thus
\begin{equation*}
\E\po  \eta^N_T(f) - \Phi_{T}(\eta_0^N)(f) \pf = \E\po  \frac{1}{N} \int_0^T \eta_s^N R_{s}\br{\ph_s} \pa{\eta^N_s} \dd s \pf.
\end{equation*}
Under Assumption~\ref{assu:general}, a triangular inequality, proposition~\ref{prop:mart-var} and Lemma~\ref{lem:bound-general} then yield
\begin{align*}
&\left|\E\po \eta^N_T(f) - \Phi_{T}(\eta_0^N)(f) \pf\right| \leqslant \frac{1}{N} \int_0^T \E\left| R_{s}\br{\ph_s } \pa{\eta^N_s} \right| \dd s \\ &\leqslant \frac{C}{N} \int_0^T e^{2\lambda(T-s)} \sup_{\eta\in\mathcal P}\po \sqrt{\eta \Gamma_{L_\eta}\po Q^V_{T-s}\po \overline f \pf\pf}\sqrt{\eta \Gamma_{L_\eta}\po Q^V_{T-s}(\one)\pf} + \Phi_{T-s}(\eta)\po \overline f \pf\Gamma_{L_\eta}\po Q^V_{T-s}(\one) \pf\pf  \dd s \\& \leqslant \frac{C}{N} \po \|w_1\|_1 + \|w_2\|_1 \pf \po \mathcal N_1(f) + \mathcal N_2(f)  \pf,
\end{align*}
which concludes the bias case $p=1$. 
\end{proof}

\begin{proof}[Proof of Theorem~\ref{thm:var-lower-bound-h} (Variance case $p=2$)]
Proposition~\ref{prop:mart-var} yields that $M^{1,T}$ is a martingale with quadratic variation $\left< M^{1,T} \right>$ satisfying Inequality~\eqref{eq:borne-var-quad}. In particular, under Assumptions~\ref{assu:general}, we have
\[
\left< M^{1,T} \right>_T \leqslant \frac{C}{N} \int_0^t e^{2\lambda (T-s)} \po  \eta_s^N \Gamma_{L_{\eta^N_s}}\po Q^V_{T-s}\po \overline f \pf\pf + \Phi_{T-s}(\eta_s^N)\po \overline f \pf^2\eta_s^N\Gamma_{L_{\eta^N_s}}\po Q^V_{T-s}(\one) \pf \pf \dd s,
\]
for some $C>0$.
Lemma~\ref{lem:bound-general} then yields that
\begin{multline*}
\left< M^{1,T} \right>_T \leqslant \frac{C}{N} \int_0^T \po w_1(T-s)^2\mathcal N_1(f)^2 + w_2(T-s)^2\mathcal N_2(f)^2 \pf \dd s \\ \leqslant \frac{C'}{N}\po \|w_1\|_{L^2} + \|w_2\|_{L^2} \pf \po \mathcal N_1(f)^2 + \mathcal N_2(f)^2  \pf ,
\end{multline*}
where $C,C'$ are independent of $T>0$. Then,
\[
\E\po \left| M_T^{T}\right|^{2} \pf = \E\left< M^{1,T} \right>_T \leqslant \frac{C}{N}\po \mathcal N_1(f)^2 + \mathcal N_2(f)^2  \pf.
\]
Finally, we have for all $T>0$:
\begin{align*}
\E \po \po \eta^N_T(f) - \Phi_{T}(\eta_0^N)(f) \pf^2 \pf &= \E\po \po M_T^{1,T} + \frac{1}{N}\int_0^T R\br{\ph_s} \dd s \pf^{2}\pf \\ &\leqslant 2\po  \E\po \left| M_T^{1,T}\right|^2 \pf + \E\po \frac{1}{N}\int_0^T \left| R\br{\ph_s} \right| \dd s\pf^2\pf \\ &\leqslant \frac{C}{N}\po \mathcal N_1(f)^2 + \mathcal N_2(f)^2 \pf + \frac{C}{N^{2}}\po \mathcal N_1(f)^2 + \mathcal N_2(f)^2  \pf
\end{align*}
which concludes the proof.
\end{proof}

\subsection{Proof of Theorem~\ref{thm:var-lower-bound-h}: the case \texorpdfstring{$p>2$}{p3}}\label{s-sec:proof-thm2-2}

In order to treat moments of order $p > 2$, the idea is to use the Burkholder-Davies-Gundy (BDG) inequality. However, since our estimates are controlling the predictable quadratic variation of the main martingale $M^{1,T}_t$ and not its uncompensated quadratic variations, which is the quantity of interest in the BDG inequality, denoted as usual
\[
t \mapsto \br{M^{1,T}}_t
\]
some more analysis about jump sizes are required. Denote by 
\[
\Delta_t X = X_{t}-X_{t^-}\]
the jump at time $t$ of a \textit{c\`adl\`ag} semi-martingale $X$. We need two additional lemmas. The first one is a weaker form of a classical property called "quasi-left continuity" (continuity at any so-called predictable stopping time) of a \textit{c\`adl\`ag} martingale, which is equivalent to the continuity of its predictable quadratic variation.

\begin{lem}\label{lem:quasi-left} 
Let $X$ be a Markov process with generator $L$ and $f\in \mathcal B(E)$ such that
\[
t\mapsto\Gamma_L(Q_{T-t}^V(f))
\]
is well-defined and bounded. Then for any fixed $t_0 \in [0,T]$, the \textit{c\`adl\`ag} version of $Q_{T-t}^V(f))(X_t)$ does not jump at $t_0$ almost surely:
\[
\mathbb P \br{  \lim_{t \to t_0-} Q_{T-t}^V(f)(X_t) =  \lim_{t \to t_0+} Q_{T-t}^V(f)(X_t) } = 1.
\]
\end{lem}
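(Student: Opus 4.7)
The plan is to realize $Y_t := Q^V_{T-t}(f)(X_t)$ as a special semimartingale whose martingale part has an absolutely continuous (in particular, continuous) predictable quadratic variation, and then invoke the classical principle that a càdlàg martingale with continuous predictable quadratic variation has no jump at a predictable (hence any deterministic) time.

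To implement this, I would first use the time-dependent generator identity $(\partial_t + L)\,Q^V_{T-t}(f) = V\cdot Q^V_{T-t}(f)$, i.e.\ the backward Kolmogorov equation for the Feynman–Kac semi-group. Feeding $\psi_t := Q^V_{T-t}(f)$ into the martingale machinery recalled just above Proposition~\ref{prop:mart-var} yields the decomposition $Y_t = Y_0 + \int_0^t V(X_s)Y_s\,\mathrm d s + N_t$, where $N$ is a local martingale whose predictable quadratic variation is $\langle N\rangle_t = \int_0^t 2\,\Gamma_L(Q^V_{T-s}(f))(X_s)\,\mathrm d s$. By the hypothesis that $\Gamma_L(Q^V_{T-t}(f))$ is bounded on $[0,T]$, this process $\langle N \rangle$ is absolutely continuous in $t$ and bounded; since $V$ and $f$ are bounded, $Y$ (and thus $N$) are bounded as well, so $N$ is a genuine square-integrable martingale.

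I would then conclude at a fixed $t_0 \in [0,T]$ from the fact that $[N] - \langle N \rangle$ is a martingale: taking expectations between $t_0^-$ and $t_0$ gives
\[
\mathbb E\bigl[(\Delta_{t_0} N)^2\bigr] = \mathbb E\bigl[\langle N\rangle_{t_0} - \langle N\rangle_{t_0^-}\bigr] = 0
\]
by continuity of $\langle N \rangle$, so $\Delta_{t_0} N = 0$ a.s. Since the finite-variation part $\int_0^{\cdot} V(X_s) Y_s\,\mathrm d s$ is absolutely continuous in $t$ (its integrand being bounded), it has no jump at $t_0$ either, hence $\Delta_{t_0} Y = 0$ almost surely, which is the claim.

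The only potentially delicate point is the justification of the semimartingale decomposition and the identification of $\langle N\rangle$ for the time-inhomogeneous test function $\psi_t = Q^V_{T-t}(f)$; but this is exactly what Assumption~\ref{assu:domain} is designed to cover, and the rest of the argument is purely stochastic-calculus: no quantitative input from Assumption~\ref{assu:general} is required.
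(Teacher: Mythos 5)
Your proof is correct and follows essentially the same route as the paper: both reduce the claim to the fact that a martingale whose predictable quadratic variation $\int_0^{\cdot} 2\,\Gamma_L\big(Q^V_{T-s}(f)\big)(X_s)\,\dd s$ is (absolutely) continuous cannot jump at a fixed time $t_0$, via $\mathbb E\big[(\Delta_{t_0} M)^2\big]=\mathbb E\big[\Delta_{t_0}[M]\big]=\mathbb E\big[\Delta_{t_0}\langle M\rangle\big]=0$. The only cosmetic difference is the choice of martingale: you compensate additively, working with $N_t = Q^V_{T-t}(f)(X_t)-\int_0^t V(X_s)Q^V_{T-s}(f)(X_s)\,\dd s$, whereas the paper uses the multiplicative Feynman--Kac martingale $\ee^{-\int_0^t V(X_s)\dd s}\,Q^V_{T-t}(f)(X_t)$ and removes the continuous, positive exponential factor at the end; in both cases the correction term is continuous in time, so the two arguments coincide.
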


\begin{proof} This is a simplified version of the usual argument of stochastic calculus that derive 'quasi-left continuity' of increasing processes from the continuity of their predictable compensator.

On the one hand, by assumption, the process $t \mapsto \int_0^t \Gamma_L(Q_{T-s}^V(f)(X_s) \dd s $ is well-defined and continuous, and is the predictable quadratic variation of the martingale 
\[
t \mapsto  M^T_t := \ee^{- \int_0^tV(X_s) \dd s}Q_{T-t}^V(f)(X_t).
\]

On the other hand, by definition of the (uncompensated) quadratic variation, the square of the jumps $(\Delta M^T_t)^2$ of $M^T_t$ are exactly given by the jumps of $t \mapsto \br{ M^T}_t$:
\[
(\Delta M^T_t)^2 = \Delta \br{M^T}_t .
\]
However, $\br{ M^T} - \langle M^T \rangle$ is a martingale (by definition), and since $\langle M^T \rangle$ is continuous from our first point, for any $t\in[0,T]$, it holds
\[
\E \po\pa{ \Delta M^T_t } ^2 \pf = \E \pa{ \Delta \br{M^T}_t   } = \E \pa{ \Delta \langle M^T \rangle _t   } = 0.
\]
This concludes the proof.
\end{proof}

Note that the martingale $M^T$ defined in the proof of Lemma~\ref{lem:quasi-left} and its uncompensated quadratic variations may very well have jumps, albeit with jump times whose distribution is atomless in time.

A elementary variant of BDG inequality for the predictable quadratic variation is given by the following. Remark that this is the usual inequality for $p=2$ because quadratic variations have all the same average by definition.

\begin{lem}\label{lem:BDG} 
Let $M$ be any local martingale with jumps bounded by $ m < + \infty $. There exists a universal constant $C_p$ which only depends on $p$, such that for any stopping time $\tau$:
\[
\E \pa{ \sup_{t \leqslant \tau} \abs{M_t}^p  }\leqslant C_p m^p + C_p \E \pa{ \langle M \rangle_\tau ^{p/2} }.
\]
\end{lem}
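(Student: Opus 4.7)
The plan is to reduce to the classical Burkholder-Davis-Gundy (BDG) inequality, which bounds $\E\pa{\sup_{t \leqslant \tau}\abs{M_t}^p}$ by a constant multiple of $\E\pa{\br{M}_\tau^{p/2}}$, where $\br{M}$ is the uncompensated quadratic variation, and then to swap $\br{M}$ for $\langle M \rangle$ at the price of an additive jump correction of order $m^p$.

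The key observation is that $N_t := \br{M}_t - \langle M \rangle_t$ is itself a pure jump local martingale. Its jumps are uniformly bounded: $\Delta \br{M}_t = (\Delta M_t)^2 \leqslant m^2$ by hypothesis, and $\Delta \langle M \rangle_t \leqslant m^2$ as well since $\langle M \rangle$ is the predictable dual projection of $\br{M}$ (at any predictable jump time $T$, $\Delta \langle M \rangle_T = \E\br{\Delta \br{M}_T \mid \F_{T-}} \leqslant m^2$); both are non-negative, hence $\abs{\Delta N} \leqslant 2m^2$. For $p \geqslant 2$, the convexity inequality $(a+b)^{p/2} \leqslant 2^{p/2-1}\pa{a^{p/2} + \abs{b}^{p/2}}$ applied to $\br{M}_\tau = \langle M \rangle_\tau + N_\tau$ (with $a = \langle M \rangle_\tau \geqslant 0$ and $a+b = \br{M}_\tau \geqslant 0$) yields $\E\pa{\br{M}_\tau^{p/2}} \leqslant 2^{p/2-1}\E\pa{\langle M \rangle_\tau^{p/2}} + 2^{p/2-1}\E\pa{\abs{N_\tau}^{p/2}}$, and applying classical BDG to the local martingale $N$ with exponent $p/2 \geqslant 1$ reduces the problem to controlling $\E\pa{\br{N}_\tau^{p/4}}$.

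Since the jumps of $N$ are bounded by $2m^2$ and $N$ is a difference of two nondecreasing processes, one has $\br{N}_\tau = \sum_{s \leqslant \tau}(\Delta N_s)^2 \leqslant 2m^2 \sum_{s\leqslant \tau}\abs{\Delta N_s} \leqslant 2m^2\pa{\br{M}_\tau + \langle M \rangle_\tau}$. Plugging back yields an estimate of the form
\[
\E\pa{\br{M}_\tau^{p/2}} \leqslant C\E\pa{\langle M \rangle_\tau^{p/2}} + C m^{p/2}\E\pa{\pa{\br{M}_\tau+\langle M\rangle_\tau}^{p/4}},
\]
and Young's inequality $m^{p/2} x^{p/4} \leqslant \epsilon\, x^{p/2} + C_\epsilon m^p$ (for $\epsilon$ small enough) absorbs the $\br{M}_\tau^{p/2}$ term back into the left-hand side, closing the estimate. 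Combined with the classical BDG bound from Step 1, this gives the desired inequality. The case $1 \leqslant p < 2$ is easier and reduces to $p=2$ (where the identity $\E\pa{M_\tau^2} = \E\pa{\langle M \rangle_\tau}$ is available so no jump correction arises) via Jensen's inequality.

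The main obstacle lies in the rigorous justification of the jump bound $\Delta \langle M \rangle \leqslant m^2$ via the defining property of the predictable dual projection, together with the standard but somewhat tedious localization argument needed to pass from true martingales (where BDG is usually stated) to local martingales. Neither is a genuine difficulty, but both must be handled carefully to make the closing Young absorption argument legitimate.
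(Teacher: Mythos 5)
Your proof follows essentially the same route as the paper's: you introduce the finite-variation martingale $N=\br{M}-\langle M\rangle$, apply classical BDG to both $M$ and $N$, bound $\br{N}$ through the jumps of $\br{M}$ and $\langle M\rangle$, and close with a Young absorption; the only cosmetic difference is that you control the jumps of $\langle M\rangle$ via the predictable dual projection identity $\Delta\langle M\rangle_T=\E\br{\Delta\br{M}_T\mid\mathcal F_{T-}}\leqslant m^2$, whereas the paper sidesteps this by simply using $\sum_{s\leqslant t}\pa{\Delta_s\langle M\rangle}^2\leqslant\langle M\rangle_t^2$ and $\sum_{s\leqslant t}\pa{\Delta_s M}^4\leqslant m^2\br{M}_t$. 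One small caveat: your closing aside that the case $1\leqslant p<2$ ``reduces to $p=2$ via Jensen'' goes the wrong way (for $p<2$ Jensen gives $\E\pa{\langle M\rangle_\tau^{p/2}}\leqslant\pa{\E\langle M\rangle_\tau}^{p/2}$, which does not yield the stated bound), but this is immaterial here since the lemma is only invoked for $p>2$ and the paper's own argument likewise requires $p/2\geqslant 1$.
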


\add{No version as simple as this one seems to be treated for its own sake in the classical literature so give a proof for completeness.}

\begin{proof}
We are going to show that 
\[
\E \pa{ \br{ M }_\tau ^{p/2} }  \leqslant C_p m^p + C_p \E \pa{ \langle M \rangle_\tau ^{p/2} },
\]
and the result will follow from the usual BDG inequality. 

Consider the martingale $N := \br{ M } -  \langle M \rangle$, and let us show that for all $t\geqslant 0$
\begin{equation}\label{eq:ineq-crochet}
\br{N}_t \leqslant \langle M \rangle^2_t + \sum_{0 \leqslant s \leqslant t} \pa{\Delta_s M }^4.
\end{equation}
Indeed, $N$ has finite variation, and hence its quadratic variation is equal to the sum of the squares of its jumps so that
\begin{align*}
\br{N}_t = \sum_{0 \leqslant s \leqslant t} \pa{\Delta_s N }^2  =   \sum_{s \leqslant t} \pa{\Delta_s \br{ M } -  \Delta_s \langle M \rangle }^2 \leqslant  \sum_{0 \leqslant s \leqslant t} \pa{\Delta_s \br{ M }}^2 + \pa{  \Delta_s \langle M \rangle }^2.
\end{align*}
Moreover, since $\langle M \rangle$ is increasing and $\langle M \rangle_0 = 0$, its jumps are almost surely non-negative and are smaller than its future value, which yields
\[
 \sum_{0 \leqslant s \leqslant t} \pa{\Delta_s \langle M \rangle }^2 \leqslant  \langle M \rangle_t^2.
\]
Finally, by definition of the (uncompensated) quadratic variation, the square of the jumps of a semi-martingales are equal to the jumps of its quadratic variation, that is $\Delta_s \br{M} = (\Delta_s M )^2$, which concludes the proof of~\eqref{eq:ineq-crochet}.

We can now write:
\[
\E \pa{ \br{ M }_\tau ^{p/2} }  \leqslant C_p \E \pa{ \langle M \rangle_\tau ^{p/2} } + C_p  \E \pa{ \abs{N}_\tau^{p/2}},
\]
and then apply the usual BDG inequality to $\E \pa{ \abs{N}_\tau^{p/2}}$ (this requires $p/2 \geqslant 1$) to get
\[
\E \pa{ \abs{N_\tau} ^{p/2} }  \leqslant C_p \E \pa{ \br{N}_\tau^{p/4} } \leqslant C_p \E \pa{ \langle M \rangle_\tau^{p/2} } + C_p \E\po\pa{ \sum_{s \leqslant t} \pa{\Delta_s M }^4 }^{p/4}\pf.
\]
Using young inequality and the fact that $t\mapsto \br{ M }_t$ is increasing, the last term in this inequality can be bounded for any $\eta >0$ by
\begin{multline*}
\E\po\pa{ \sum_{s \leqslant t} \pa{\Delta_s M }^4 }^{p/4}\pf \leqslant m^{p/2} \E\po\pa{ \sum_{s \leqslant t} \pa{\Delta_s M }^2 }^{p/4}\pf =m^{p/2} \E\po\pa{ \sum_{s \leqslant t} \Delta_s \br{ M } }^{p/4}\pf \\ \leqslant \frac{1}{2\eta}m^p +\frac{\eta}{2}  \E\po \po \br{ M }_t \pf^{p/2} \pf,
\end{multline*}
so that, taking $\eta$ small enough, the result follows.
\end{proof}

We can now finish the proof of Theorem~\ref{thm:var-lower-bound-h}.

\begin{proof}[Proof of Theorem~\ref{thm:var-lower-bound-h} (case $p > 2$)]
The proof is similar to the case $p=2$ except we now need to bound uniformly the jumps of the martingale $M^{1,T}_t$. We know from its definition~\eqref{eq:mart-var-with-rest} and from the fact that $t \mapsto Q^V_{T-t}(f)$ is continuous that its jumps are equal to:
\[
\Delta_t M^{1,T} = \lim_{t \to t^+}\frac{\eta^N_{t}Q^V_t(f)}{\eta^N_{t}Q^V_{t}(\one)} - \lim_{t \to t^-}\frac{\eta^N_{t}Q^V_t(f)}{\eta^N_{t}Q^V_{t}(\one)} .
\]
If we condition the particle system at a branching time and consider the particle system up to the next branching time, all particles are independent. We can then consider all the jump times of the process $ t \mapsto Q^V_{T-t}(f)(X^i_t) $ associated with particle $i$ which is almost surely a countable set denoted $\mathcal J^{i}$. Conditionally on these jump times, a particle $j \neq i$ will be independent, so that by Lemma~\ref{lem:quasi-left}, the jumps of the process $t \mapsto Q^V_{T-t}(f)(X^j_t)$ do belong to the set $\mathcal J^{i}$ with probability $0$. As a consequence, almost surely, two particles won't ever jump at the same time (from a death event or not), and
\[
\abs{\Delta_t M^{1,T}} \leqslant \frac{2}{N} \frac{\norm{f}_\infty C_+ ^2}{c_-^2},
\]
where  $C_+,c_- $ are the constant from Condition~\eqref{eq:borne-semi-group}. 

We thus obtain with the modified BDG inequality of Lemma~\ref{lem:BDG}
\[
\E\pa{ \abs{ M^{1,T}_t }^p } \leqslant C_p\frac{\|f\|_{\infty}}{N} + C_p \E\pa{ \langle M^{1,T}_t \rangle^{p/2} },
\]
in which $C_p$ depends only on $p$, $c_-$ and $C_+$. The rest of the proof is then identical to the $p=2$ case.
\end{proof}

\subsection{Proof theorem~\ref{thm:exp-bound}}\label{s-sec:proof-thm3}

The proof of Theorem~\ref{thm:exp-bound} is of similar spirit as the proof of Theorem~\ref{thm:var-lower-bound-h}, but rely on Proposition~\ref{prop:mart-exp} instead of Proposition~\ref{prop:mart-var}.

\begin{proof}[Proof of Theorem~\ref{thm:exp-bound}]
Fix $f\in \mathcal A$, $T>0$, and recall that
\[
\ph_t(\eta) =  \Phi_{T-t}(\eta)(f) - \Phi_{T}(\eta_0^N)(f) ,\qquad \overline f = f - \Phi_{T}(\eta_0)(f),
\]
and the definition of $M^{\rm exp,T}$ from Equation~\eqref{eq:mart-exp-with-rest}. Proposition~\ref{prop:mart-exp} yields that, under Assumption~\ref{assu:general}, $M^{\rm exp,T}$ is a martingale, so that for all $t\geqslant 0$, $\E\po M_T^{\rm exp,T}\pf = 1$, and thus
\[
\E\po\exp\po N\ph_t(\eta_T^N) \pf\pf  \leqslant \exp \po  N\int_0^T \sup_{\eta\in \mathcal P_N}\left[ \eta \Gamma_{\eta}^{\rm exp}\left(\frac{\delta \ph_s}{ \delta \eta}\right) + \frac{1}{N} R^{\rm exp}_{s}\br{\ph_s} \po \eta \pf \right]\dd s\pf.
\]
Using the bound on $\Gamma_{\eta}^{\rm exp}$ and $R^{\rm exp}$ from Proposition~\ref{prop:mart-exp}, as well as Lemma~\ref{lem:bound-general}, we get
\begin{multline*}
\sup_{T\geqslant 0}\int_0^T \sup_{\eta\in \mathcal P_N}\left[ \eta \Gamma_{\eta}^{\rm exp}\left(\frac{\delta \ph_s}{ \delta \eta}\right) + \frac{1}{N} \eta R^{\rm exp}\br{\ph_s}\po \eta\pf \right]\dd s \\ \leqslant C \po \|w_1\|_2^2 + \|w_2\|_2^2\pf \po \mathcal N_1(f)^2 + \mathcal N_2(f)^2 + \|f\|_{\infty}^2 \pf e^{C\|f\|_{\infty}} \\ + \frac{C}{N}\po \|w_1\|_1 + \|w_2\|_1\pf \po \mathcal N_1(f) + \mathcal N_2(f) + \|f\|_{\infty} \pf e^{C\|f\|_{\infty}} ,
\end{multline*}
for some $C>0$, which concludes the proof. 
\end{proof}

\section{Proof of Proposition~\ref{prop:mart-var} and~\ref{prop:mart-exp}}\label{sec:weak-back-error-anal}

This section is dedicated to the proofs of Proposition~\ref{prop:mart-var} and~\ref{prop:mart-exp}. The first one does not require any assumptions. However, for the bound on the exponential martingale, it is necessary to have a uniform lower bound on the survival rates $Q^V_t(\one)$ as given by condition~\eqref{eq:borne-semi-group} of Assumption~\ref{assu:general}. Using the Markov property, we have that for all $N\in\N$, and any time dependent test function $\ph:\R_+\times\mathcal P_N$, the processes $t\mapsto  M_t^v \in \R$ and $t\mapsto  M_t^e \in \R$ defined by:
\begin{equation}\label{eq:mart-var}
  M^v_t := \ph_t(\eta^N_t) - \int_0^t \po \partial_s+\mathcal L \pf (\ph_s) (\eta^N_s) \, \dd s,
\end{equation}
and
\begin{equation}\label{eq:mart-exp}
  M^{e}_t := \exp \po N \ph_t(\eta_{t}^{N}) - \int_{0}^{t}{\rm e}^{ -N\ph_s } (\partial_{s}+\mathcal L) ({\rm e}^{ N\ph_s }) (\eta_{s}^{N}) \dd s \pf,
\end{equation}
are martingales with respect to the filtration at hand. The goal here is then to bound uniformly in $N\in \N$ thanks to the mean-field structure the terms
\[
N\po \partial_t+\mathcal L \pf (\ph_t) (\eta^N_t),\qquad N\po \partial_s+\mathcal L \pf (\ph^2_t) (\eta^N_t),\qquad\text{and}\qquad  \frac1N {\rm e}^{ -N\ph_t } (\partial_{t}+\mathcal L) ({\rm e}^{ N\ph_t }) (\eta_{t}^{N}),
\]
for the specific functions $\ph_t(\eta) := \po \Phi_{T-t}(\eta)\po \overline f \pf\pf$. This will be done with the estimates appearing in Proposition~\ref{prop:mart-var} and~\ref{prop:mart-exp}.

To this end, we need a notion of differential calculus for functions defined on the set of probability measure, namely we want to differentiate $\eta\mapsto \ph(\eta)$ for $\ph\in\mathcal D$. We introduce in Section~\ref{s-sec:derivative} the notion of flat and discrete derivative. Proposition~\ref{prop:mart-var} and~\ref{prop:mart-exp} will then be proven in Section~\ref{s-sec:mart-var} and~\ref{s-sec:mart-exp} respectively. For the remainder of this work, fix $f\in \mathcal A$, and write, for $k\in\left\{1,2\right\}$
\[
\ph_t(\eta) := \po \Phi_{T-t}(\eta)\po \overline f \pf\pf^k, \qquad \overline f : = f - \Phi_{T}(\eta_0^N)(f),
\]
and for a fixed $T$ and the given $\ph$, we denote the martingale given in~\eqref{eq:mart-var} by $M^{1,T}$, and the one from~\eqref{eq:mart-exp} by $M^{\rm exp,T}$.

\subsection{Flat and discrete  derivative}\label{s-sec:derivative}

Let us define more rigorously the notion of flat derivative of measure-valued functions. We restrict to functions defined on probabilities (denoted $\varphi:\mathcal P\to \R$). 

\begin{defi}\label{def:deriv}
We say that $\ph:\mathcal P\to \add{\R^p}$ is differentiable at $\eta_0 \in \mathcal P$ on the convex interval $
I := ( \eta_h )_{h \in [0,1] } $
where 
$\eta_h := h \eta_1 +(1-h)\eta_0$ with $\eta_0,\eta_1\in\mathcal P$ if there exists a measurable function denoted 
\[
(\eta,x) \in I \times E \mapsto \frac{\delta \ph}{\delta \eta}(\eta,x) \in \R
\]
such that for all $h_1 \in [0,1]$
\[
\ph(\eta_{h_1}) - \ph(\eta_{0}) = \int_{0}^{h_1} \int_{E} \frac{\delta \ph}{\delta \eta}\po\eta_h,x\pf \,  (\eta_1-\eta_0)(\dd x) \, \dd h,
\]
and which satisfies the integrability condition:
\[
\int_{0}^{1} \int_{E} \abs{\frac{\delta \ph}{\delta \eta}}\pa{(1-h)\eta_0 + h\eta_1, x} \,  (\eta_1+\eta_0)(\dd x) \, \dd h < +\infty .
\]
If $\nu$ is zero mass real-valued measure such that $\eta + h \nu$ is a probability for all $h > 0$ small enough, we say that $\ph$ is differentiable at $\eta$ in the direction given $\nu$ if $\ph$ is differentiable on $(\eta + h \nu)_{0\leqslant h \leqslant h_1}$ for some $h_1 > 0$ and we will denote
\[
\frac{\delta \ph}{\delta \eta}(\eta)(\nu) = \int_E \frac{\delta \ph}{\delta \eta}(\eta,x)  \nu(\dd x).
\]

\end{defi}

\begin{rem}
\begin{itemize}
    \item Differentiability in a separating class of directions is required in order to obtain uniqueness of the function $x \mapsto \frac{\delta \ph}{\delta \eta}(\eta_0,x)$ at $\eta_0$ up to an additive constant . 
    
    \item With this definition, $\frac{\delta \ph}{\delta \eta}(\eta_0,x)$ can be defined only up to an additive constant. This ambiguity is resolved by  asking that the derivative of constants vanish and that the usual derivation chain rules do apply.
    
    \item  The derivative $\frac{\delta \ph}{\delta \eta}$ is a function of both $\eta\in\mathcal P$ and $x\in E$, but we are going to use the simplified notations, for any generator $G$, 
    \[\eta G\po \frac{\delta \ph}{\delta \eta}\pf = \eta \po G\po x\mapsto \frac{\delta \ph}{\delta \eta}(\eta,x)\pf\pf,\qquad \eta \Gamma_G\po \frac{\delta \ph}{\delta \eta}\pf = \eta \po \Gamma_G\po x\mapsto \frac{\delta \ph}{\delta \eta}(\eta,x)\pf\pf.\]
\end{itemize}
\end{rem}

The only cases considered here is when $\ph(\eta)$ is an integration with respect to $\eta$ composed with usual smooth functions.
\begin{lem}\label{lem:regu-integration}
If $f:E\to\R^p$ is measurable and bounded, then the map $\phi:\eta \mapsto \eta(f)$ is differentiable, and we set
\begin{equation}\label{eq:deriv_lin}
\frac{\delta \phi}{ \delta \eta}(\eta,x) = f(x) - \eta(f).
\end{equation}
Moreover, if $F: \R^p \to \R$ is differentiable and $\phi_F:\eta\mapsto F(\eta(f))$, we set:
\[
\frac{\delta \phi_F}{ \delta \eta}(\eta,x) = \sum_{i=1}^p D_i\pa{F}(\eta(f))\pa{f^i(x) - \eta(f^i)}.
\]
Those definitions do satisfy the requirements of Definition~\ref{def:deriv}.
\end{lem}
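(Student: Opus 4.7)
The plan is to directly verify that the two candidate functions in~\eqref{eq:deriv_lin} and its composed version satisfy the integral identity of Definition~\ref{def:deriv}, then check the integrability tail conditions. There are really two independent things to establish, and the second reduces to the first via the chain rule.

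For the linear case $\phi(\eta)=\eta(f)$, I would parametrise $\eta_h = (1-h)\eta_0 + h\eta_1$ and simply compute both sides. The left-hand side is $\phi(\eta_{h_1})-\phi(\eta_0) = h_1\,\int_E f(x)(\eta_1-\eta_0)(\dd x)$ by linearity. For the right-hand side with the candidate $\tfrac{\delta\phi}{\delta\eta}(\eta,x) = f(x)-\eta(f)$, the key observation is that $\eta_1-\eta_0$ is a signed measure of zero total mass, so the $\eta_h(f)$ subtraction contributes nothing to the inner integral; the outer integral in $h$ then reproduces the factor $h_1$. The integrability condition is trivial since $f$ is bounded and $\eta_1+\eta_0$ has total mass $2$.

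For the composition $\phi_F(\eta)=F(\eta(f))$ with $f:E\to\R^p$ bounded, I would set $g(h) := F(\eta_h(f))$, which is a smooth function of the scalar parameter $h\in[0,1]$ because $\eta_h(f)$ depends affinely on $h$ and $F$ is differentiable. Applying the classical chain rule in $\R^p$ gives
\[
g'(h) = \sum_{i=1}^{p} D_i F(\eta_h(f))\,\bigl(\eta_1(f^i)-\eta_0(f^i)\bigr),
\]
and using again the zero-mass property of $\eta_1-\eta_0$ I can insert the centring $-\eta_h(f^i)$ for free to recognise $g'(h) = \int_E \tfrac{\delta \phi_F}{\delta\eta}(\eta_h,x)(\eta_1-\eta_0)(\dd x)$. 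The fundamental theorem of calculus applied to $g$ on $[0,h_1]$ then yields exactly the required identity.

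The integrability condition reduces to bounding $|D_iF(\eta_h(f))|$ uniformly in $h\in[0,1]$; this holds because $h\mapsto\eta_h(f)$ takes values in the (compact) segment between $\eta_0(f)$ and $\eta_1(f)$ in $\R^p$ and the $D_iF$ are continuous, combined with $\|f^i\|_\infty<\infty$. I do not anticipate any genuine obstacle here: the only subtlety worth highlighting explicitly in the write-up is the cancellation from $(\eta_1-\eta_0)(E)=0$, which is what allows the centred form of the flat derivative to coincide with the naive one; this is also what makes $\tfrac{\delta\phi}{\delta\eta}$ well-defined only up to an additive constant, consistent with the remark following Definition~\ref{def:deriv}.
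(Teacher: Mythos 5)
Your proposal is correct and amounts to the same direct verification the paper itself compresses into the single line ``By definition'': linearity plus the zero-mass cancellation of $\eta_1-\eta_0$ for the first claim, and the chain rule along the affine path $h\mapsto\eta_h(f)$ for the second. The only point worth noting is that your integrability argument quietly uses continuity (or at least local boundedness) of the partials $D_iF$ on the segment joining $\eta_0(f)$ and $\eta_1(f)$, which goes slightly beyond bare differentiability of $F$ but is harmless in all the paper's applications, where $F$ is smooth.
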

\begin{proof}
By definition.
\end{proof}

We have as a direct consequence

\begin{lem}\label{lem:derivative_flow}
Assume that for a given $T-t \in [0,T]$, $x \mapsto Q^{V}_{T-t}\po \one \pf(x)$ is bounded below away from $0$. Then
$\varphi:\eta \mapsto \Phi_{T-t}(\eta)\po \overline f \pf$ is differentiable, and its derivative is given by
\[
\frac{\delta \varphi }{\delta \eta}\po \eta,x\pf =  \frac{1}{\eta Q^V_{T-t}(\one)}  Q^V_{T-t}\po \overline f \pf(x) - \frac{\eta Q^V_{T-t}\po \overline f \pf}{\eta Q^V_{T-t}(\one)^2}  Q^V_{T-t}(\one)(x).
\]
\end{lem}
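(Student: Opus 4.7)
The plan is to recognize $\varphi$ as the composition of two linear integration functionals with the smooth map $F(a,b) := a/b$, and then invoke the composition part of Lemma~\ref{lem:regu-integration}. Set $f_1 := Q^V_{T-t}\po \overline f \pf$ and $f_2 := Q^V_{T-t}(\one)$; these are bounded measurable functions on $E$ since $V$ and $\overline f$ are bounded. Then
\[
\varphi(\eta) = \frac{\eta(f_1)}{\eta(f_2)} = F\po \eta(f_1), \eta(f_2) \pf.
\]

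The assumed uniform lower bound $\inf_{x\in E} Q^V_{T-t}(\one)(x) =: m > 0$ guarantees $\eta(f_2) \geqslant m$ for every probability measure $\eta$. In particular, along any interpolation segment $\eta_h = (1-h)\eta_0 + h \eta_1$ with $\eta_0,\eta_1 \in \mathcal P$, the denominator $\eta_h(f_2)$ stays bounded below by $m$, so $F$ is jointly $C^1$ on a neighborhood of $\{(\eta_h(f_1), \eta_h(f_2)) : h \in [0,1]\}$, with partial derivatives $\partial_a F(a,b) = 1/b$ and $\partial_b F(a,b) = -a/b^2$. The integrability condition in Definition~\ref{def:deriv} is then trivial: the integrand it involves is uniformly bounded by a constant depending only on $m$, $\|f_1\|_\infty$ and $\|f_2\|_\infty$.

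Applying the chain-rule formula from Lemma~\ref{lem:regu-integration} to $\varphi(\eta) = F(\eta(f_1), \eta(f_2))$ yields
\[
\frac{\delta \varphi}{\delta \eta}(\eta,x) = \frac{1}{\eta(f_2)}\po f_1(x) - \eta(f_1)\pf - \frac{\eta(f_1)}{\eta(f_2)^2}\po f_2(x) - \eta(f_2)\pf.
\]
Expanding, the two constant contributions $-\eta(f_1)/\eta(f_2)$ and $+\eta(f_1)/\eta(f_2)$ cancel, leaving exactly the announced formula once $f_1$ and $f_2$ are replaced by their definitions. There is no genuine obstacle here: the only point that needs care is the uniform positivity of the denominator along the interpolation segment used to test differentiability, and this follows at once from the standing hypothesis.
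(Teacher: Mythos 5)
Your proof is correct and follows exactly the route the paper intends: the paper's own proof is a one-line appeal to the definition~\eqref{eq:FK_flow} and the chain rule of Lemma~\ref{lem:regu-integration}, and you simply spell out the details (composition with $F(a,b)=a/b$, the uniform lower bound on the denominator guaranteeing differentiability of $F$ and the integrability condition of Definition~\ref{def:deriv}, and the cancellation of the constant terms). Nothing is missing.
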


\begin{proof}
This is a direct consequence of the definition~\eqref{eq:FK_flow} and Lemma~\ref{lem:regu-integration}.
\end{proof}

The proofs of the present work rely on generator calculus based on the approximation of the mean-field generator $L_\eta$ by a particle system. It turns out that such calculus can be made more transparent with some notion of 'discrete' or 'particle' derivative. 

\begin{defi}\label{def:discrete-derivative}
Let $\eta \in \mathcal P$ be given and assume that $\eta + \frac{1}{N}\nu \in \mathcal P $ where $\nu$ is a signed measure with mean $0$. For all $\ph:\mathcal P \to\R$, we define the \emph{particle (or discrete) derivative} in the direction $\nu$ by 
\[
\frac{\delta_{N} \ph}{ \delta_{N} \eta}(\eta)(\nu) = N \po\ph(\eta + \frac{1}{N}\nu) - \ph(\eta) \pf.
\]
If $\ph$ is moreover differentiable at $\eta$ in the direction $\nu$, the second order particle (or discrete) derivative (in the direction $\nu$) is defined by: 

\begin{equation*}
  \frac{\delta^{2}_{N} \ph}{ {\delta_{N} \eta}^2 }(\eta)(\nu) = 2N \po \frac{\delta_{N} \ph}{ \delta_{N} \eta}(\eta)(\nu) - \nu\po\frac{\delta \ph}{ \delta \eta}(\eta,\cdot)\pf \pf.
\end{equation*}
\end{defi}

\begin{rem} Few remarks.
\begin{itemize}
    \item Notice that if $\ph$ is differentiable in the direction $\nu$, we have
\[
\lim_{N \to +\infty} \frac{\delta_{N} \ph}{ \delta_{N} }(\eta)(\nu) = \frac{\delta \ph}{ \delta \eta}(\eta)(\nu).
\]
    \item Moreover, a second order derivative $\frac{\delta^{2} \ph}{ {\delta \eta}^2 }$ could also be defined by iterating the previous definition, and in this case we would also have
\[
  \lim_{N \to +\infty} \frac{\delta^{2}_{N} \ph}{ {\delta_{N} \eta}^2 }(\eta)(\nu) =  \frac{\delta^{2} \ph}{ {\delta \eta}^2 }(\eta)(\nu,\nu).
\]
\end{itemize}

\end{rem}

We are going to bound the integral part of the martingales~\eqref{eq:mart-var} and~\eqref{eq:mart-exp} using those derivatives, respectively in Section~\ref{s-sec:mart-var} and Section~\ref{s-sec:mart-exp}. To do so, we need bound on the second order discrete derivatives of 
\[
\ph^k_t(\eta)= \po \Phi_{T-t}(\eta)\po \overline f \pf\pf^k,
\]
for all $0\leqslant t\leqslant T$ and $k\in \left\{1,2\right\}$, in the direction $(\delta_{x'}-\delta_x)/N$, $x,x'\in E$.

\begin{lem}\label{lem:disc-derivative-flow}
Let $\eta = \eta_\x \in \mathcal P_N$ be a given empirical probability of sample size $N$ and assume that \[ \nu = \pa{\delta_{x'}-\delta_{x^i}} \] with $x' \in E$ and $x^i \in \x$. We recall the notation $\eta^{(i)} = \frac1N \sum_{j \neq i}\delta_{x^j}$ which is a non-negative measure of mass $\frac{N-1}{N}$. It holds:

\begin{equation*}
    \left|\frac{\delta^2_N \ph_t}{\delta_N \eta^2}\po \eta\pf\po \nu \pf \right| \leqslant \frac{2}{ \po\eta^{(i)} Q^V_{T-t}(\one)\pf^2} \po \left|\nu Q^V_{T-t} \po \overline f \pf \nu Q^V_{T-t} \po \one \pf \right| + \pa{\Phi_{T-t}(\eta)\po \overline f \pf} \pa{\nu Q^V_{T-t}(\one) }^2 \pf,
\end{equation*}
and
\begin{equation*}
    \left|\frac{\delta^2_N \ph^2_t}{\delta_N \eta^2}\po \eta\pf\po \nu \pf \right| \leqslant \frac{1}{ \po\eta^{(i)}  Q^V_{T-t}(\one)\pf^2} \po   6\pa{\nu Q^V_{T-t} \po \overline f\pf }^2 + 10 \pa{\Phi_{T-t}(\eta)\po \overline f \pf}^2 \pa{\nu Q^V_{T-t}(\one) }^2 \pf .
\end{equation*}
\end{lem}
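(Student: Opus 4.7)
The plan is to exploit the explicit form $\Phi_{T-t}(\eta)(\bar f) = \eta(g)/\eta(h)$ where I abbreviate $g := Q^V_{T-t}(\bar f)$ and $h := Q^V_{T-t}(\one)$, and then perform an elementary but careful computation of the discrete second derivative. Writing $\eta' := \eta + \nu/N$ with $\nu = \delta_{x'} - \delta_{x^i}$, a direct manipulation of fractions yields
\[
\ph_t(\eta') - \ph_t(\eta) \;=\; \frac{\eta(h)\nu(g) - \eta(g)\nu(h)}{N\,\eta(h)\,\eta'(h)} \;=\; \frac{\nu(g) - \Phi_{T-t}(\eta)(\bar f)\,\nu(h)}{N\,\eta'(h)},
\]
using $\eta(g) = \eta(h)\Phi_{T-t}(\eta)(\bar f)$ in the second equality. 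Combined with the expression $\nu(\tfrac{\delta\ph_t}{\delta\eta}) = (\nu(g) - \Phi_{T-t}(\eta)(\bar f)\nu(h))/\eta(h)$ coming from Lemma~\ref{lem:derivative_flow}, I can subtract the two and obtain the closed form
\[
\frac{\delta^2_N\ph_t}{\delta_N\eta^2}(\eta)(\nu) \;=\; -\,\frac{2\,\bigl[\nu(g) - \Phi_{T-t}(\eta)(\bar f)\nu(h)\bigr]\nu(h)}{\eta(h)\,\eta'(h)}.
\]

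The key geometric observation is that both $\eta(h)$ and $\eta'(h)$ are bounded below by $\eta^{(i)}(h)$: indeed $\eta(h) = \eta^{(i)}(h) + h(x^i)/N$ and $\eta'(h) = \eta^{(i)}(h) + h(x')/N$, and $h = Q^V_{T-t}(\one) \geqslant 0$. Hence $\eta(h)\,\eta'(h) \geqslant \eta^{(i)}(h)^2$. Plugging this in and applying the triangle inequality on the numerator yields the first stated bound.

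For the second estimate I apply the discrete Leibniz rule. With $a := \ph_t(\eta') - \ph_t(\eta)$, the identity $(\ph_t(\eta)+a)^2 - \ph_t(\eta)^2 = 2\ph_t(\eta)a + a^2$ combined with the linearity of the flat derivative in $\ph \mapsto 2\ph_t\,\tfrac{\delta\ph_t}{\delta\eta}$ gives
\[
\frac{\delta^2_N\ph_t^2}{\delta_N\eta^2}(\eta)(\nu) \;=\; 2\,\ph_t(\eta)\,\frac{\delta^2_N\ph_t}{\delta_N\eta^2}(\eta)(\nu) \;+\; 2\,\Bigl(\tfrac{\delta_N\ph_t}{\delta_N\eta}(\eta)(\nu)\Bigr)^2.
\]
The first term is bounded using the estimate already obtained, together with $2|\ph_t(\eta)\nu(g)\nu(h)| \leqslant \nu(g)^2 + \ph_t(\eta)^2\nu(h)^2$. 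The second term is handled using the closed form for $a$ derived above, $(\nu(g)-\ph_t(\eta)\nu(h))^2 \leqslant 2\nu(g)^2 + 2\ph_t(\eta)^2\nu(h)^2$, and again $\eta'(h)^2 \geqslant \eta^{(i)}(h)^2$. Adding up the constants $2+4 = 6$ for the $\nu(g)^2$ coefficient and $2+4+4 = 10$ for the $\ph_t(\eta)^2\nu(h)^2$ coefficient delivers precisely the second claimed inequality.

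There is no real obstacle here beyond clean bookkeeping: the difficulty, such as it is, lies in choosing the right intermediate algebraic identity so that the lower bound $\eta^{(i)}(h)^2$ (rather than the weaker $\eta^{(i)}(h)^3$ one might naively get from bounding $\eta(h)^2\eta'(h)$) appears in the denominator, which is essential so that later applications (summing over the $N$ particles and using the carr\'e-du-champ-type bounds in Proposition~\ref{prop:mart-var} and~\ref{prop:mart-exp}) produce terms of order $O(1)$ rather than losing an additional factor in $N$.
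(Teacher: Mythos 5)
Your proof is correct and follows essentially the same route as the paper's: you derive the same closed form for the discrete second derivative (with denominator $\eta(h)\,\eta'(h)\geqslant \big(\eta^{(i)}(h)\big)^2$, $h=Q^V_{T-t}(\one)\geqslant 0$), and the same discrete chain rule $\frac{\delta_N^2(\ph_t^2)}{\delta_N\eta^2}=2\ph_t\frac{\delta_N^2\ph_t}{\delta_N\eta^2}+2\big(\frac{\delta_N\ph_t}{\delta_N\eta}\big)^2$ with identical Young-inequality bookkeeping yielding the constants $6$ and $10$. Only your closing motivational remark is slightly off: a cubed denominator would not cost a factor of $N$ (since $\eta^{(i)}\big(Q^V_{T-t}(\one)\big)\geqslant \frac{N-1}{N}c_-\ee^{-\lambda(T-t)}$), the real point being the matching of the $\ee^{\lambda(T-t)}$ scalings used later, but this does not affect the validity of the proof.
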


\begin{proof}
Let us first show the result for $k=1$. Fix $\eta_{\x} \in \mathcal P_N$, $x' \in E$, $x^i \in \x$ and denote
\[
\tilde \eta := \eta + \nu/N = \frac1N \delta_{x'} +\frac1N \sum_{j \neq i} \delta_{x^j} .
\]
Fix some non-negative kernel $P:\mathcal P\to \mathcal P$ and write
\[
\Phi(\eta) = \frac{\eta P\po \overline f \pf}{ \eta P\po \one \pf} .
\]
From the definition of the discrete derivative, we get
\begin{multline*}
\frac{\delta_N \Phi}{\delta_N\eta} (\eta) \pa{\nu} = N\po \frac{\tilde \eta P\po \overline f \pf}{ \tilde \eta P\po \one \pf} - \frac{\eta P\po \overline f \pf}{ \eta P\po \one \pf} \pf \\ = \frac{1}{\eta P\po \one \pf \tilde \eta P\po \one \pf} \po \po \tilde \eta - \eta \pf P\po \overline f \pf  \eta P\po \one \pf - \po \tilde \eta - \eta\pf P\po \one \pf  \eta P\po \overline f \pf  \pf \\= \frac{1}{\eta P\po \one \pf \tilde \eta P\po \one \pf} \po \nu P\po \overline f \pf  \eta P\po \one \pf - \nu P\po \one \pf  \eta P\po \overline f \pf  \pf.
\end{multline*}
Lemma~\ref{lem:regu-integration} yields that $\Phi$ is differentiable, and
\[
\frac{\delta\Phi}{\delta\eta}(\eta,x) = \frac{1}{ \eta P\po \one \pf } P\po \overline f \pf(x) - \frac{\eta P\po \overline f \pf}{ \po\eta P\po \one \pf\pf^2 }P\po \one \pf(x).
\]
The last two equalities put together with the definition of the second order discrete derivative yield
\begin{multline*}
   \frac{\delta_N^2\Phi}{\delta_N\eta^2} \po \eta \pf \pa{\nu} 
= 2N \po \frac{\delta_{N} \Phi}{ \delta_{N} \eta}(\eta)(\nu) - \frac{\delta \Phi}{ \delta \eta}(\eta)(\nu) \pf \\ = \frac{2}{\eta P\po \one \pf \tilde \eta P\po \one \pf} \br{-\nu P\po \one \pf \, \nu P\po \overline f \pf + \frac{\eta P\po \overline f \pf}{ \eta P\po \one \pf } \pa{\nu P\po \one \pf }^2 },
\end{multline*}
so that, since $\tilde \eta\po g \pf \geqslant \eta^{(i)}\po g \pf$ and $\eta\po g \pf \geqslant \eta^{(i)}\po g \pf$ for all $g\geqslant 0$, it holds (with $g= P\po\one\pf$):
\begin{equation}\label{eq:second-discret-derivative-1}
\frac{\delta_N^2\Phi}{\delta_N\eta^2} \po \eta \pf \pa{\nu} \leqslant \frac{2}{\pa{\eta^{(i)} P\po \one \pf}^2} \po \left|\nu P\po \one \pf \, \nu P\po \overline f \pf\right| + \Phi(\eta) \pa{\nu P\po \one \pf }^2 \pf.
\end{equation}
Plugging $P= Q_{T-t}^V$ in the previous inequality concludes in the case $k=1$. For $k=2$, we first want to prove the following chain rule for squares:
\begin{equation}\label{eq:chain-rule}
\frac{\delta_N^2\pa{ \Phi^2}}{\delta_N\eta^2} = 2 \Phi \frac{\delta_N^2\Phi}{\delta_N\eta^2} + 2 \pa{\frac{\delta_N\Phi}{\delta_N\eta}}^2.
\end{equation}
Write
\[
\frac{\delta_N\pa{ \Phi^2}}{\delta_N\eta}(\eta)(\nu) = N\po \Phi^2(\eta + \frac{1}{N}\nu ) - \Phi^2(\eta)\pf = \frac{\delta_N \Phi}{\delta_N\eta}(\eta)(\nu) \po \Phi(\eta + \frac{1}{N}\nu) + \Phi(\eta) \pf.
\]
From Lemma~\ref{lem:regu-integration}, we also have that
\[
\frac{\delta\pa{ \Phi^2}}{\delta\eta} = 2\Phi \frac{\delta \Phi}{\delta\eta},
\]
so that
\begin{align*}
\frac{\delta_N^2\pa{ \Phi^2}}{\delta_N\eta^2}(\eta)(\nu) &= 2N\po \frac{\delta_N\Phi}{\delta_N\eta}(\eta)(\nu) \po \Phi(\eta + \frac{1}{N}\nu) + \Phi(\eta) \pf  - 2\Phi(\eta)\frac{\delta\Phi}{\delta\eta}(\eta)(\nu) \pf \\ &= 2N\frac{\delta_N\Phi}{\delta_N\eta}(\eta)(\nu) \po \Phi(\eta + \frac{1}{N}\nu) - \Phi(\eta) \pf + 4N\Phi(\eta)\po \frac{\delta_N \Phi}{\delta_N\eta}(\eta)(\nu) - \frac{\delta\Phi}{\delta\eta}(\eta)(\nu) \pf \\ &= 2\po \frac{\delta_N\Phi}{\delta_N\eta}(\eta)(\nu) \pf^2 + 2\Phi(\eta)\frac{\delta_N^2\Phi}{\delta_N\eta^2}(\eta)(\nu),
\end{align*}
which proves Equation~\eqref{eq:chain-rule}. Now write
\begin{equation*}
\pa{\frac{\delta_N\Phi}{\delta_N\eta}}^2 = \frac{1}{\po \tilde \eta \po P\po \one \pf\pf\pf^2}\po \nu P\po \overline f \pf - \Phi(\eta)\nu P\po \one \pf \pf^2 \leqslant \frac{2}{\po \eta^{(i)} P\po \one \pf\pf^2} \po \po \nu P\po \overline f \pf \pf^2 + \Phi(\eta)^2 \po \nu P\po \one \pf \pf^2 \pf. 
\end{equation*}
Inequality~\eqref{eq:second-discret-derivative-1} and inequality $2ab\leqslant a^2 + b^2$  yield
\[
\Phi \frac{\delta_N^2\Phi}{\delta_N\eta^2} \leqslant \frac{1}{\po \eta^{(i)} P\po \one \pf\pf^2} \po \po  \nu P\po \overline f \pf \pf^2 + 3\Phi(\eta)^2 \po \nu P\po \one \pf \pf^2 \pf,
\]
so that the chain rule~\eqref{eq:chain-rule}, yields
\[
\frac{\delta_N^2\pa{ \Phi^2}}{\delta_N\eta^2} \leqslant \frac{1}{\po \eta^{(i)} P\po \one \pf\pf^2} \po 6 \po \nu P\po \overline f \pf \pf^2 + 10 \Phi(\eta)^2 \po \nu P\po \one \pf \pf^2 \pf,
\]
and we conclude the proof by plugging again $P = Q_{T-t}^V $.
\end{proof}

\subsection{Proof of Proposition~\ref{prop:mart-var}}\label{s-sec:mart-var}

The goal of this section is to prove Proposition~\ref{prop:mart-var}. Let us recall that the process of interest is the semi-martingale $t \mapsto 
\ph_t(\eta^N_t)$ where $ \ph_t(\eta)= \po \Phi_{T-t}(\eta)\po \overline f \pf\pf $, $\Phi_{T-t}$ being the normalized Feynman-Kac non-linear semi-group. From usual stochastic analysis of Markov processes, 
\[
\ph_t(\eta^N_t) = M^{1,T}_t + \int_0^t \pa{\partial_s + \mathcal L}\br{\ph_s}(\eta^N_s) \dd s,  
\]
and the predictable quadratic variation of $M^{1,T}$ is given by
\[
\langle  M^{1,T} \rangle_t = \int_0^t 2 \Gamma_{\mathcal L}\br{\ph_s}(\eta^N_s) \dd s .
\]
The proof consists in computing $\mathcal L\br{\ph_t}$, $\mathcal L\br{\ph_t^2}$ and $\partial_t \ph_t$ and then estimating $\pa{\partial_t + \mathcal L}\br{\ph_t}$ as well as $2 \Gamma_{\mathcal L}( \ph_t) = \mathcal L\br{\ph_s^2} - 2 \ph_t \mathcal L \pa{\ph_t}  $ using the explicit expression of the Feynamn-Kac semi-group.

As a preliminary, recall the definition of the \textit{carr\'e-du-champs} from definition~\ref{def:carre-du-champs}, and let us prove that it can be identified with local variances through the following property.

\begin{lem}\label{lem:carre-du-champ-as-derivatives}
For all $x\in E$, $\eta\in \mathcal P$ and $g$ such that $g,g^2\in \mathcal D\po L_\eta\pf$ ($\eta$ being fixed), it holds that:
\[
\lim_{u\rightarrow 0} \frac{1}{u} \E_x\po \po g(Y_u) - g(x) \pf^2 \pf = 2\Gamma_{L_\eta}\po g\pf(x),
\]
where $Y$ is a Markov process with generator $L_{\eta}$.
\end{lem}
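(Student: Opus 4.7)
The plan is to unfold the square, recognize the resulting expression as the defining combination of the carr\'e-du-champs operator, and identify each piece as an infinitesimal derivative of the semi-group generated by $L_\eta$ (with $\eta$ frozen, so that this really is a linear Markov generator).

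\textbf{Step 1: Expansion.} I would start by writing, for the transition semi-group $(P^\eta_u)_{u\geqslant 0}$ of the Markov process $Y$ with (frozen, $\eta$-independent in its action) generator $L_\eta$,
\[
\mathbb{E}_x\!\left[(g(Y_u) - g(x))^2\right] = P^\eta_u(g^2)(x) - 2\, g(x)\, P^\eta_u(g)(x) + g(x)^2.
\]
This is purely algebraic. I would then rewrite the right-hand side as
\[
\bigl(P^\eta_u(g^2)(x) - g^2(x)\bigr) - 2\,g(x)\bigl(P^\eta_u(g)(x) - g(x)\bigr),
\]
which puts each term in a form whose derivative in $u$ at $0$ is controlled by the domain assumption.

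\textbf{Step 2: Identification with the generator.} By the assumption that $g,g^2 \in \mathcal{D}(L_\eta)$ and the domain definition recalled in the ``Basic considerations'' subsection of the paper, the maps $u \mapsto P^\eta_u(g)(x)$ and $u \mapsto P^\eta_u(g^2)(x)$ are differentiable at $0$ with derivatives $L_\eta(g)(x)$ and $L_\eta(g^2)(x)$, respectively. Dividing by $u$ and letting $u \to 0$ in the previous display yields
\[
\lim_{u\to 0} \frac{1}{u}\mathbb{E}_x\!\left[(g(Y_u) - g(x))^2\right] = L_\eta(g^2)(x) - 2\,g(x)\,L_\eta(g)(x).
\]

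\textbf{Step 3: Conclusion via the definition of $\Gamma$.} The last expression is exactly $2\,\Gamma_{L_\eta}(g)(x)$ by Definition~\ref{def:carre-du-champs}, which ends the proof.

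I do not expect any serious obstacle here: the only subtle point is that $L_\eta = L + S_\eta$ depends on a measure $\eta$, but since $\eta$ is fixed throughout the statement, $L_\eta$ behaves as a standard linear Markov generator and the standard identification of the semi-group derivative with the generator on its domain applies without modification. The positivity of $\Gamma_{L_\eta}(g)$ also falls out of this identity, since it is a limit of non-negative quantities, which is the content alluded to just before the lemma in the discussion of~\eqref{eq:Gamma_as_var}.
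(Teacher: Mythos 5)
Your proposal is correct and follows essentially the same route as the paper: expand $(g(Y_u)-g(x))^2$ into $\pa{g^2(Y_u)-g^2(x)} - 2g(x)\pa{g(Y_u)-g(x)}$, use the domain assumption to identify the limits of the two difference quotients with $L_\eta(g^2)(x)$ and $L_\eta(g)(x)$, and conclude by Definition~\ref{def:carre-du-champs}. Nothing further is needed.
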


\begin{proof}
Write
\[
\po g(Y_u) - g(x) \pf^2 = \po g(Y_u) \pf^2 + \po g(x) \pf^2 - 2g(Y_u)g(x) = \po g(Y_u) \pf^2 - \po g(x) \pf^2 -2g(x)\po g(Y_u) - g(x) \pf,
\]
so that
\[
\lim_{t\rightarrow 0} \frac{1}{t} \E_x\po \po g(Y_u) - g(x) \pf^2 \pf = L_\eta(g^2)(x) - 2 g(x) L_{\eta}\po g \pf(x) = 2\Gamma_{L_\eta}\po g \pf(x).
\]
\end{proof}

The following lemma is the corner stone of the proof of Proposition~\ref{prop:mart-var}.

\begin{lem}\label{lem:Lcal_gen}   
Under Assumption~\ref{assu:domain}, it holds that
\begin{equation*}
    \mathcal L(\ph^k_t)(\eta) = \eta L_{\eta} \po \frac{\delta \pa{\ph^k_t}}{ \delta \eta} \pf +  \frac{1}{N} R\br{\ph^k_t}(\eta),
  \end{equation*} 
where the rest terms do satisfy:
\begin{multline*}
\left| R\br{\ph_t}(\eta) \right| \leqslant \max_{1\leqslant i\leqslant N}\frac{4}{\po\eta^{(i)} \po Q^V_{T-t}(\one)\pf\pf^2}\\ \times \po \sqrt{\eta \Gamma_{L_\eta}\po Q^V_{T-t}\po \overline f \pf\pf}\sqrt{\eta \Gamma_{L_\eta}\po Q^V_{T-t}(\one)\pf} + \Phi_{T-t}(\eta)\po \overline f \pf \eta \Gamma_{L_\eta}\po Q^V_{T-t}(\one) \pf \pf,
\end{multline*}
as well as
\begin{equation*}
    \left| R\br{\ph^2_t}(\eta)\right| \leqslant \max_{1\leqslant i\leqslant N}\frac{4}{\po\eta^{(i)} \po Q^V_{T-t}(\one)\pf\pf^2}\po  3 \eta \Gamma_{L_\eta}\po Q^V_{T-t}\po \overline f \pf\pf + 5 \Phi_{T-t}(\eta)\po \overline f \pf^2\eta \Gamma_{L_\eta}\po Q^V_{T-t}(\one) \pf \pf .
\end{equation*}
\end{lem}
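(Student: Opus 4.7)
The plan is to apply the single-particle generator $L^{(x_i)}+S_{\eta_\x}^{(x_i)}$ to the function $\x\mapsto\ph^k_t(\eta_\x)$, isolate the first-order term in the flat derivative (which after summation gives $\eta_\x L_{\eta_\x}(\delta\ph^k/\delta\eta)$), and control the remainder using the pointwise second-order discrete-derivative bounds of Lemma~\ref{lem:disc-derivative-flow} combined with Cauchy--Schwarz. The starting point, valid by Definition~\ref{def:discrete-derivative}, is the exact identity
\[
\ph\po\eta_\x + \tfrac{1}{N}\nu_i\pf - \ph(\eta_\x) = \tfrac{1}{N}\,\nu_i\po\tfrac{\delta \ph}{\delta \eta}(\eta_\x,\cdot)\pf + \tfrac{1}{2N^2}\tfrac{\delta_N^2 \ph}{\delta_N \eta^2}(\eta_\x)(\nu_i),
\]
with $\nu_i = \delta_{x'}-\delta_{x_i}$.

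For the jump part, one integrates this identity against $K_{\eta_\x}(x_i,dx')$ and sums over $i$: the first-order piece produces exactly $\eta_\x S_{\eta_\x}(\delta\ph/\delta\eta)$, and the second-order piece produces $\frac{1}{2N}\eta_\x\int\frac{\delta_N^2 \ph}{\delta_N \eta^2}(\eta_\x)(\delta_{x'}-\delta_\cdot)K_{\eta_\x}(\cdot,dx')$. For the diffusion part $L^{(x_i)}$, which is not directly accessible through finite increments, I would write $L^{(x_i)}[\ph(\eta_\x)] = \lim_{t\to 0}\frac{1}{t}\E\br{\ph(\eta_\x + \frac{1}{N}\nu_t)-\ph(\eta_\x) \mid Y_0=x_i}$ with $\nu_t = \delta_{Y_t}-\delta_{x_i}$ and $Y$ of generator $L$: the first-order piece converges to $\frac{1}{N}L(\delta\ph/\delta\eta(\eta_\x,\cdot))(x_i)$, while the second-order piece, bounded pointwise by Lemma~\ref{lem:disc-derivative-flow} applied to $\nu_t$, produces after $t\to 0$ and a Cauchy--Schwarz application of the type $\limsup_{t\to 0}\frac{1}{t}\E\abs{g(Y_t)-g(x_i)}\abs{h(Y_t)-h(x_i)} \leqslant 2\sqrt{\Gamma_L(g)(x_i)\Gamma_L(h)(x_i)}$ (Lemma~\ref{lem:carre-du-champ-as-derivatives}) a contribution involving $\sqrt{\Gamma_L(Q^V\overline f)\Gamma_L(Q^V\one)}$.

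Summing the jump and diffusion contributions, the leading-order terms yield $\eta_\x L_{\eta_\x}(\delta \ph/\delta \eta)$, while the decomposition $\Gamma_L + \Gamma_{S_\eta} = \Gamma_{L_\eta}$ (together with a final Cauchy--Schwarz over $\eta_\x(\dd x_i)$ turning pointwise products into $\sqrt{\eta\Gamma_{L_\eta}(\cdot)}\sqrt{\eta\Gamma_{L_\eta}(\cdot)}$) produces the stated bound on $R[\ph_t]$; the $\max_i$ prefactor appears because the denominator $(\eta^{(i)}Q^V(\one))^2$ in Lemma~\ref{lem:disc-derivative-flow} depends on $i$ and must be controlled uniformly. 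The $k=2$ case follows the identical scheme, using the second (quadratic) bound of Lemma~\ref{lem:disc-derivative-flow}, the chain rule $\delta_N^2\ph^2/\delta_N\eta^2 = 2\ph\,\delta_N^2\ph/\delta_N\eta^2 + 2(\delta_N\ph/\delta_N\eta)^2$ proved within that lemma (which explains the appearance of $\Phi_{T-t}(\eta)(\overline f)^2$ in place of $\Phi_{T-t}(\eta)(\overline f)$), and $2|ab|\leqslant a^2+b^2$ in lieu of Cauchy--Schwarz, which is why the final bound for $R[\ph_t^2]$ features a sum rather than a product.

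The main technical obstacle is the treatment of the diffusion piece of $L$: the clean finite-increment expansion above works verbatim only for jump generators, and one must appeal to the infinitesimal variance characterization of $\Gamma_L$ (Lemma~\ref{lem:carre-du-champ-as-derivatives}) to legitimately pass to the $t\to 0$ limit inside the expectation. This is the step where Assumption~$(\mathcal D)$ is crucial, ensuring that the functions $Q^V_{T-t}(f)$, their squares, and the composed quantities actually belong to $\mathcal D(L)$ so that all generator manipulations are rigorously justified.
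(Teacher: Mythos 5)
Your proposal is correct and takes essentially the same route as the paper: the remainder $R[\ph^k_t]$ is identified with the limit of the expected second-order discrete derivative along an auxiliary frozen-$\eta$ dynamics, then bounded pointwise by Lemma~\ref{lem:disc-derivative-flow}, converted to carr\'e-du-champ terms via the infinitesimal-variance identification of Lemma~\ref{lem:carre-du-champ-as-derivatives}, and finished with Cauchy--Schwarz at the level of the $\eta$-average. The only minor difference is organizational: you split $L_{\eta}$ into its diffusion part $L$ and jump part $S_{\eta}$ and treat the jump part by direct integration against $K_{\eta}$, whereas the paper runs a single auxiliary process $Y^i$ with the full frozen generator $L_{\eta}$; your recombination via $\Gamma_{L_\eta}=\Gamma_L+\Gamma_{S_\eta}$ and an extra elementary Cauchy--Schwarz yields the same bound.
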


\begin{proof}[Proof of Lemma~\ref{lem:Lcal_gen}]
Fix $t \geq 0$ and $\eta_\x = \eta \in \mathcal P_N$,
\[
\eta=\eta_{\x}=\frac{1}{N}\sum_{i=1}^N\delta_{x_i},\qquad (x_1,\cdots,x_N)\in E^N,
\]
and for each $i\in \llbracket1,N\rrbracket$, let $Y^i$ be a Markov process with initial condition $Y^i_0 = x_i$ and generator $L_{\eta}$. Since $\eta$ is fixed, $Y^i$ evolves according to the initial generator $L$, and at rate $V$ jumps at the location of one of the $(x_j)_{1\leqslant j \leqslant N}$. For $k\in\left\{1,2\right\}$, we claim that 
\begin{equation}\label{eq:R_expli}
 R\br{\ph^k}(\eta) = 
   \lim_{u \to 0}  \frac1u \frac{1}{2N} \sum_{i = 1}^N  \mathbb E \br{  \frac{\delta^{2}_{N} \ph^k}{ {\delta_{N} \eta}^2 }(\eta)(\delta_{Y^{i}_u} - \delta_{x_i})              }
\end{equation}
Indeed, we have, using the definition of the discrete derivative from Definition~\ref{def:discrete-derivative}:
\begin{align*}
N\left(\ph_t^k(\eta + \frac{1}{N}\po\delta_{Y^i_u}-\delta_{x_i}\pf) - \ph_t^k(\eta)\right)  & = \frac{\delta_{N} \ph_t^k}{ \delta_{N} \eta}(\eta)(\delta_{Y^i_u} - \delta_{x_i}) \pm \po \frac{\delta\ph_t^k}{\delta \eta} (\eta,Y^i_u) - \frac{\delta\ph_t^k}{\delta \eta} (\eta,x_i) \pf \\  & = \frac{\delta\ph_t^k}{\delta \eta} (\eta,Y^i_u) - \frac{\delta\ph_t^k}{\delta \eta} (\eta,x_i) + \frac{1}{2N} \frac{\delta^2_N\ph_t^k}{\delta_N\eta^2}(\eta,\delta_{Y^i_u} - \delta_{x_i}).
\end{align*}
Under Assumption~\ref{assu:domain}, $x_i\mapsto \varphi^k\po\eta^{\x}\pf \in \mathcal D\po L_\eta \pf$, and Lemma~\ref{lem:regu-integration} and~\ref{lem:derivative_flow} yields that $\frac{\delta \ph_t^k}{\delta \eta}\in \mathcal D\po L_\eta \pf$. Hence, we have that 
\begin{equation}\label{eq:proof-Lcal_gen-1}
\lim_{u\rightarrow 0}\frac{1}{u}\E\po\ph_t^k(\eta + \frac{1}{N}\po\delta_{Y^i_u}-\delta_{x_i}\pf) - \ph_t^k(\eta)\pf = L_{\eta}(x_i\mapsto \ph_t^k(\eta_{\x})),
\end{equation}
as well as:
\begin{equation}\label{eq:proof-Lcal_gen-2}
\lim_{u\rightarrow 0}\frac{1}{u}\E\po \frac{\delta\ph_t^k}{\delta \eta} (\eta,Y^i_u) - \frac{\delta\ph_t^k}{\delta \eta}(\eta,x_i) \pf = L_{\eta}\po \frac{\delta \ph_t^k}{\delta \eta} \pf (\eta,x_i).
\end{equation}
This yields~\eqref{eq:R_expli}.

Let's now focus on the case $k=1$. Lemma~\ref{lem:disc-derivative-flow} yields that
\begin{multline*}
\left|\frac{\delta^2_N\ph_t}{\delta_N\eta^2}(\eta,\delta_{Y^i_u} - \delta_{x_i})\right| \\\leqslant \frac{2}{\po\eta^{(i)} \po Q^V_{T-t}(\one)\pf\pf^2}  \left| Q^V_{T-t}\po \overline f \pf(Y^i_u)-Q^V_{T-t}\po \overline f \pf(x_i)\right|\left| Q^V_{T-t}(\one)(Y^i_u)-Q^V_{T-t}(\one)(x_i)\right| \\ + \frac{2}{\po\eta^{(i)} \po Q^V_{T-t}(\one)\pf\pf^2}\Phi_{T-t}(\eta)\po \overline f \pf \pa{Q^V_{T-t}(\one)(Y^i_u)-Q^V_{T-t}(\one)(x_i) }^2 ,
\end{multline*}
so that using Cauchy-Schwartz inequality 
\begin{multline*}
\E\left|\frac{\delta^2_N\ph_t}{\delta_N\eta^2}(\eta,\delta_{Y^i_u} - \delta_{x_i})\right| \\\leqslant \frac{2}{\po\eta^{(i)} \po Q^V_{T-t}(\one)\pf\pf^2} \sqrt{\E\po \left( Q^V_{T-t}\po \overline f \pf(Y^i_u)-Q^V_{T-t}\po \overline f \pf(x_i)\right)^2\pf}\sqrt{\E\po \left( Q^V_{T-t}(\one)(Y^i_u)-Q^V_{T-t}(\one)(x_i)\right)^2\pf} \\ + \frac{2}{\po\eta^{(i)} \po Q^V_{T-t}(\one)\pf\pf^2}\Phi_{T-t}(\eta)\po \overline f \pf \E\po\pa{Q^V_{T-t}(\one)(Y^i_u)-Q^V_{T-t}(\one)(x_i) }^2\pf ,
\end{multline*}
and finally using Lemma~\ref{lem:carre-du-champ-as-derivatives} we get that 
\begin{multline}\label{eq:proof-Lcal_gen-3-k1}
\limsup_{u\rightarrow 0} \frac{1}{u}\left|\E\po \frac{\delta^2_N\ph_t}{\delta_N\eta^2}(\eta,\delta_{Y^i_u} - \delta_{x_i}) \pf  \right| \\\leqslant \frac{4}{\po\eta^{(i)} \po Q^V_{T-t}(\one)\pf\pf^2} \po \sqrt{\Gamma_{L_\eta}\po Q^V_{T-t}\po \overline f \pf \pf (x_i)}\sqrt{\Gamma_{L_\eta}\po Q^V_{T-t}(\one) \pf (x_i)} + \Phi_{T-t}(\eta)\po \overline f \pf \Gamma_{L_\eta}\po Q^V_{T-t}(\one) \pf (x_i) \pf.
\end{multline}
Putting together Equations~\eqref{eq:proof-Lcal_gen-1},~\eqref{eq:proof-Lcal_gen-2} and~\eqref{eq:proof-Lcal_gen-3-k1} yields
\begin{multline*}
\left| N L_{\eta}(x_i\mapsto \ph_t(\eta_{\x})) - L\po \frac{\delta \ph_t}{\delta \eta}(\eta,\cdot) \pf (x_i) \right| \\ \leqslant  \max_{1 \leqslant j \leqslant N}\frac{4}{\po\eta^{(j)} \po Q^V_{T-t}(\one)\pf\pf^2}  \po \sqrt{\Gamma\po Q^V_{T-t}\po \overline f \pf \pf (x_i)}\sqrt{\Gamma\po Q^V_{T-t}(\one) \pf (x_i)} + \Phi_{T-t}(\eta)\po \overline f \pf \Gamma\po Q^V_{T-t}(\one) \pf (x_i) \pf.
\end{multline*}
Recalling that 
\[
\mathcal L (\ph_t) (\eta) = \frac{1}{N} \sum_{i=1}^N N \times L_{\eta}(x_i\mapsto \ph_t(\eta_{\x})),
\]
integrating with respect to $\eta$ and using Cauchy-Schwartz inequality conclude the proof in the case $k=1$.

For the case $k=2$, we have by Lemma~\ref{lem:disc-derivative-flow}
\begin{multline*}
\left|\frac{\delta^2_N\ph_t^2}{\delta_N\eta^2}(\eta,\delta_{Y^i_u} - \delta_{x_i})\right| \leqslant \frac{12}{\po\eta^{(i)} \po Q^V_{T-t}(\one)\pf\pf^2} \pa{ Q^V_{T-t}\po \overline f \pf(Y^i_u)-Q^V_{T-t}\po \overline f \pf(x_i)}^2 \\ + \frac{20}{\po\eta^{(i)} \po Q^V_{T-t}(\one)\pf\pf^2} \pa{\Phi_{T-t}(\eta)\po \overline f \pf}^2 \pa{Q^V_{T-t}(\one)(Y^i_u)-Q^V_{T-t}(\one)(x_i) }^2,
\end{multline*}
and the rest follows as in the case $k=1$.
\end{proof}

Now we are ready to conclude:

\begin{proof}[Proof of Proposition~\ref{prop:mart-var}]
Fix $T>0$, and recall the definition of the non-linear flow from~\eqref{eq:FK_flow}, as well as
\[
\overline f = f - \Phi_T(\eta_0^N)(f).
\]

We first claim that the non-linear semi-group indeed satisfies the Kolmogorov equation~\eqref{eq:Kolmo}, and that 
\begin{equation}\label{eq:evolution2}
\partial_t\ph^k_t(\eta) = -\eta L_\eta\po  \frac{\delta\ph^k_t}{\delta\eta}  \pf ,\qquad k\in\left\{1,2\right\},
\end{equation}
where here
$ \ph^k(t,\eta)= \po \Phi_{T-t}(\eta)\po \overline f \pf\pf^k$. Indeed, under Assumption~\ref{assu:domain}, from the definition of $Q^V$, we have that
\[
\partial_t \eta Q^V_{T-t}\po \overline f \pf = -\eta (L-V) \po Q^V_{T-t}\po \overline f \pf\pf  = -\eta L_\eta \po Q^V_{T-t}\po \overline f \pf\pf+ \eta(V)\eta Q^V_{T-t}\po \overline f \pf,
\]
so that
\begin{align*}\label{eq:evolution1}
\partial_t \varphi_t(\eta) = -\frac{1}{\eta Q^V_t\po \overline f \pf}\eta  L_\eta\po Q^V_t\po \overline f \pf\pf+ \frac{\eta Q^V_t\po \overline f \pf}{\po\eta Q^V_{t}(\one)\pf^2}\eta L_\eta\po Q^V_t\po \overline f \pf\pf = -\eta L_\eta \po \frac{\delta \varphi_t}{\delta\eta} \pf.
\end{align*}
We also have that
\begin{equation*}
\partial_t \varphi_t^2(\eta) = 2 \Phi_{T-t}(\eta)\po \overline f \pf\partial_t \Phi_{T-t}(\eta)\po \overline f \pf =  -\eta  L_{\eta}\po \frac{\delta \varphi_t^2}{\delta\eta}\pf ,
\end{equation*}
so that \eqref{eq:evolution2} holds true.

Now, the expression of the martingale~\eqref{eq:mart-var}, Equation~\eqref{eq:evolution2} and Lemma~\ref{lem:Lcal_gen} yield that
\[
\int_0^t \po \partial_s+\mathcal L \pf (\ph_s^k) (\eta^N_s) \, \dd s = \frac{1}{N}\int_0^t R\br{ \ph^{k}_s}\pa{\eta_s^N} \dd s,
\]
for $k\in\left\{1,2\right\}$.
Hence $M^{1,T}$ is a martingale with the corresponding bound on the rest term. The quadratic variation of the martingale~\eqref{eq:mart-var} is given by
\[
\left< M^{1,T} \right>_t = \int_0^t \po \partial_s + \mathcal L \pf (\ph^2_s)(\eta^N_s) - 2 \ph\po \partial_t + \mathcal L \pf (\ph_s)(\eta^N_s) \dd s,
\]
which yields
\[
\left< M^{1,T} \right>_t = \frac{1}{N}\int_0^t R\br{ \ph^{2}_s}(\eta_s^N ) - 2 \ph_s(\eta_s^N) R\br{ \ph_{s}}(\eta_s^N) \dd s,
\]
and concludes the proof.
\end{proof}

\subsection{Proof of Proposition~\ref{prop:mart-exp}}\label{s-sec:mart-exp}

The goal of this section is to prove Proposition~\ref{prop:mart-exp} using the martingale~\eqref{eq:mart-exp} in the same way as for the proof of Proposition~\ref{prop:mart-var} based on the martingale~\eqref{eq:mart-var}. Let us recall that the latter is the usual exponential martingale
\[
    M^{\exp,T}_t := \exp \pa{ N\ph_t(\eta^N_t) - \int_0^t \ee^{-N\ph_s(\eta)}\pa{ \partial_s + \mathcal L } \br{ \ee^{N\ph_s} } (\eta^N_s) \dd s } .
\]
It is useful, in order to understand the decomposition 
\[
\eta\Gamma_{\eta}^{\rm exp}\left(\frac{\delta \ph_t}{ \delta \eta}\right) + \frac{1}{N} R^{\rm exp}\br{\ph_t}(\eta)
\]
of the proposition, to introduce the 'exponential' generator of the particle system defined for any probability-valued test function $\ph: \mathcal P \to \R$ such that $e^{N\varphi}\in\mathcal D(\mathcal L)$ by
\[
\Ha_{N}(\ph) := \frac{1}{N}{\rm e}^{ -N\ph } \mathcal L \po {\rm e}^{ N\ph } \pf .
\]
When $N$ is large and $\mathcal L$ has a mean-field structure, its formal limit is given by
\[
\lim_{N \to + \infty} \Ha_{N}(\ph) = \eta L_{\eta}\left(\frac{\delta \ph}{ \delta \eta}\right) + \eta \Gamma_{\eta}^{\rm exp}\left(\frac{\delta \ph}{ \delta \eta}\right)
\]
where the second term in the right hand side is the corresponding 'exponential' \textit{carr\'e-du-champs} defined by:
\begin{equation}\label{eq:exp-carre-du-champ}
 \Gamma_{\eta}^{\rm exp}\po f \pf = {\rm e}^{-f}L_{\eta}( {\rm e}^{f})-L_{\eta}(f) \geqslant 0,   
\end{equation}
for any $f\in \mathcal D\po L_\eta \pf$ such that $\ee^f\in \mathcal D\po L_\eta \pf$.  As before, the proof will consists in computing $\Ha_{N}(\ph_t)$ for fixed $N$ as well as $\frac1N \ee^{-N\ph_t} \partial_t \br{ \ee^{N\ph_t} } $ and estimating the rest terms given by their sum.
We have then an equivalent of Lemma~\ref{lem:Lcal_gen} for this exponential generator, that we split in two bounds. First:

\begin{lem}\label{lem:Hcal_gen} Under Assumption~\ref{assu:domain}, it holds: 
\begin{equation}\label{eq:Hcal_gen}
\Ha_{N}(\ph_t)(\eta) =  \eta L_{\eta}\left(\frac{\delta \ph_t}{ \delta \eta}\right) + \eta \Gamma_{\eta}^{\rm exp}\left(\frac{\delta \ph_t}{ \delta \eta}\right)  + \frac{1}{N} \eta R^{\rm exp}\br{\ph_t}(\eta),
\end{equation}
where the rest $R^{\rm exp}$ satisfies, under Condition~\eqref{eq:borne-semi-group} of Assumption~\ref{assu:general}:
\begin{multline*}
    \left|\eta R^{\rm exp}\br{\ph}(\eta)\right| \\\leqslant \max_{1\leqslant i\leqslant N}\frac{2e^{C\|f\|_{\infty}}}{\po\eta^{(i)} \po Q^V_{T-t}(\one)\pf\pf^2} \po \sqrt{\eta \Gamma_{L_\eta}\po Q^V_{T-t}\po \overline f \pf\pf}\sqrt{\eta \Gamma_{L_\eta}\po Q^V_{T-t}(\one)\pf} + \Phi_{T-t}(\eta)\po \overline f \pf\eta\Gamma_{L_\eta}\po Q^V_{T-t}(\one) \pf \pf,
\end{multline*}
for some constant $C > 0$ that depends only on $C_+,c_-$
\end{lem}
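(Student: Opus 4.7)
The plan is to mimic the proof of Lemma~\ref{lem:Lcal_gen}, replacing the linear generator calculation by an exponential one. Starting from the definition
\[
\Ha_N(\ph_t)(\eta_\x) = \frac{1}{N}\sum_{i=1}^N \ee^{-N\ph_t(\eta_\x)} L_\eta\bigl(x_i \mapsto \ee^{N\ph_t(\eta_\x)}\bigr),
\]
the idea is to view each term as a time derivative: introducing, for each $i$, an independent Markov process $Y^i$ with frozen generator $L_\eta$ starting at $x_i$, one has
\[
\ee^{-N\ph_t(\eta_\x)} L_\eta\bigl(x_i \mapsto \ee^{N\ph_t(\eta_\x)}\bigr) = \lim_{u \to 0} \frac{1}{u}\,\E\!\left[\ee^{N[\ph_t(\eta + \frac{1}{N}(\delta_{Y^i_u}-\delta_{x_i})) - \ph_t(\eta)]} - 1\right].
\]
Writing $g(x) := \tfrac{\delta \ph_t}{\delta \eta}(\eta, x)$ and invoking the definition of the second discrete derivative, the exponent inside splits as $A + B$ with $A := g(Y^i_u) - g(x_i)$ and $B := \tfrac{1}{2N}\tfrac{\delta^2_N \ph_t}{\delta_N \eta^2}(\eta)(\delta_{Y^i_u}-\delta_{x_i})$, so that $\ee^{A+B} - 1 = (\ee^A - 1) + \ee^A (\ee^B - 1)$.

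The first piece $\ee^A - 1$ yields the main term: passing to the limit $u \to 0$ and using Assumption~\ref{assu:domain} together with Lemmas~\ref{lem:regu-integration}--\ref{lem:derivative_flow} (so $\ee^g \in \mathcal D(L_\eta)$), one recognizes
\[
\lim_{u\to 0}\frac{1}{u}\,\E[\ee^A - 1] = \ee^{-g(x_i)} L_\eta(\ee^g)(x_i) = L_\eta(g)(x_i) + \Gamma^{\rm exp}_\eta(g)(x_i).
\]
Averaging in $i$ then produces exactly $\eta L_\eta(\delta\ph_t/\delta\eta) + \eta \Gamma^{\rm exp}_\eta(\delta\ph_t/\delta\eta)$, which is the leading order in~\eqref{eq:Hcal_gen}.

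The second piece $\ee^A(\ee^B - 1)$ produces the rest term $\tfrac{1}{N}\eta R^{\rm exp}[\ph_t]$. Here the crucial observation is that $g$ is uniformly bounded: from Lemma~\ref{lem:derivative_flow} and Condition~\eqref{eq:borne-semi-group},
\[
\|g\|_\infty \leqslant \frac{2 C_+}{c_-}\|\overline f\|_\infty \leqslant C\|f\|_\infty
\]
for a constant $C$ depending only on $c_-, C_+$, whence $|\ee^A| \leqslant \ee^{C\|f\|_\infty}$. Since $B = O(1/N)$ uniformly thanks to Lemma~\ref{lem:disc-derivative-flow} and Condition~\eqref{eq:borne-semi-group}, the inequality $|\ee^B - 1| \leqslant |B|\,\ee^{|B|}$ gives $|\ee^A(\ee^B - 1)| \leqslant \ee^{C\|f\|_\infty}|B|$, so
\[
\limsup_{u\to 0}\frac{1}{u}\bigl|\E[\ee^A(\ee^B - 1)]\bigr| \leqslant \frac{\ee^{C\|f\|_\infty}}{2N}\,\limsup_{u\to 0}\frac{1}{u}\,\E\!\left|\frac{\delta^2_N \ph_t}{\delta_N \eta^2}(\eta)(\delta_{Y^i_u}-\delta_{x_i})\right|.
\]
Applying Lemma~\ref{lem:disc-derivative-flow} to the right-hand side, followed by Cauchy--Schwarz and Lemma~\ref{lem:carre-du-champ-as-derivatives} to identify the limits as carr\'e-du-champs (mirroring the bound~\eqref{eq:proof-Lcal_gen-3-k1} in the proof of Lemma~\ref{lem:Lcal_gen}), one obtains the claimed pointwise bound for each $i$. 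Summing over $i$, dividing by $N$, and integrating against $\eta$ yields the stated estimate on $\eta R^{\rm exp}[\ph_t](\eta)$.

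The main obstacle is the treatment of the cross term $\ee^A(\ee^B - 1)$: without the uniform lower bound $Q^V_t(\one) \geqslant c_- \ee^{-\lambda t}$ from Condition~\eqref{eq:borne-semi-group}, the function $g$ could be unbounded, which would preclude the factorization $|\ee^A| \leqslant \ee^{C\|f\|_\infty}$ and the bound would blow up. This is precisely why Proposition~\ref{prop:mart-exp}, unlike Proposition~\ref{prop:mart-var}, must invoke the uniform lower bound on the survival rate.
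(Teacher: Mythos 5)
Your proposal is correct and follows essentially the same route as the paper: you freeze the generator $L_\eta$, introduce the processes $Y^i$, split the exponent into the flat-derivative increment $A$ plus the second-discrete-derivative correction $B$, identify $\lim_u \frac1u\E[\ee^A-1]=L_\eta(g)+\Gamma^{\rm exp}_\eta(g)$, and control the cross term $\ee^A(\ee^B-1)$ via the uniform bounds on $\frac{\delta\ph_t}{\delta\eta}$ and $\frac{\delta^2_N\ph_t}{\delta_N\eta^2}$ (the paper's Lemma~\ref{lem:bounded-exp-norm-inf}) together with the bound~\eqref{eq:proof-Lcal_gen-3-k1}, which is exactly the paper's $\kappa$-factorization written as $|\ee^A(\ee^B-1)|\leqslant \ee^{C\|f\|_\infty}|B|$. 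The only quibble is the intermediate constant $\frac{2C_+}{c_-}\|\overline f\|_\infty$ for $\|\delta\ph_t/\delta\eta\|_\infty$, which should read $\pa{C_+/c_-+C_+^2/c_-^2}\|\overline f\|_\infty$; this is immaterial since the statement only requires a constant depending on $c_-,C_+$.
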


Since, contrary to Lemma~\ref{lem:Lcal_gen}, there is the additional term $\eta \Gamma_{\eta}^{\rm exp}\left(\frac{\delta \ph}{ \delta \eta}\right)$, we need the following additional bound:

\begin{lem}\label{lem:bound-flat-derivative}
Under Assumption~\ref{assu:domain}, there exists $C>0$, depending only on $C_+,c_-$, such that the exponential \textit{carr\'e-du-champs} satisfies:
 \begin{multline*}
0 \leqslant \eta \Gamma^{\rm exp}_{\eta}\po \frac{\delta \ph}{ \delta \eta}\pf \leqslant \frac{{\rm e}^{C\|f\|_{\infty}}}{2} \eta \Gamma_{L_\eta}\po \frac{\delta \ph}{ \delta \eta}\pf \\\leqslant \max_{1\leqslant i\leqslant N}\frac{{\rm e}^{C\|f\|_{\infty}}}{\po\eta^{(i)} \po Q^V_{T-t}(\one)\pf\pf^2}\po  \eta \Gamma_{L_\eta}\po Q^V_{T-t}\po \overline f \pf\pf + \Phi_{T-t}(\eta)\po \overline f \pf^2\eta \Gamma_{L_\eta}\po Q^V_{T-t}(\one) \pf \pf.
 \end{multline*}
\end{lem}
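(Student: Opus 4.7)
My plan is to establish the three inequalities of Lemma~\ref{lem:bound-flat-derivative} in order: non-negativity, the pointwise domination $\Gamma^{\rm exp}_\eta(\tfrac{\delta \ph}{\delta\eta}) \leqslant \tfrac{1}{2}\ee^{C\|f\|_\infty}\Gamma_{L_\eta}(\tfrac{\delta\ph}{\delta\eta})$, and a Minkowski-based decomposition of $\Gamma_{L_\eta}(\tfrac{\delta\ph}{\delta\eta})$ in terms of $Q^V_{T-t}(\overline f)$ and $Q^V_{T-t}(\one)$. The only nontrivial step, and the one that forces the appearance of the factor $\ee^{C\|f\|_\infty}$, is the comparison between the exponential and usual carr\'e-du-champs, which requires an $L^\infty$ bound on $\frac{\delta \ph}{\delta \eta}$.

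First I would establish the uniform sup-norm bound $\|\tfrac{\delta\ph_t}{\delta\eta}(\eta,\cdot)\|_\infty \leqslant C\|f\|_\infty$ for some $C$ depending only on $c_-,C_+$. Starting from the explicit expression
\[
\frac{\delta \ph_t}{\delta \eta}(\eta,x) = \frac{Q^V_{T-t}(\overline f)(x)}{\eta Q^V_{T-t}(\one)} - \frac{\eta Q^V_{T-t}(\overline f)}{(\eta Q^V_{T-t}(\one))^2}\, Q^V_{T-t}(\one)(x)
\]
provided by Lemma~\ref{lem:derivative_flow}, the trivial inequality $|Q^V_{T-t}(\overline f)| \leqslant 2\|f\|_\infty Q^V_{T-t}(\one)$ together with the two-sided control $c_-\ee^{-\lambda(T-t)} \leqslant Q^V_{T-t}(\one) \leqslant C_+\ee^{-\lambda(T-t)}$ from~\eqref{eq:borne-semi-group} yields the claim after the $\ee^{-\lambda(T-t)}$ factors cancel.

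Next, decomposing $L_\eta = L + S_\eta$ and exploiting the linearity in the generator of both $\Gamma$ and $\Gamma^{\rm exp}$, I would handle each part separately. For the diffusion part, a direct chain-rule computation gives the identity $\Gamma^{\rm exp}_L(g) = \Gamma_L(g)$. For the jump part, the definition unfolds to
\[
\Gamma^{\rm exp}_{S_\eta}(g)(x) = \int_E \pa{\ee^{g(x') - g(x)} - 1 - (g(x') - g(x))} K_\eta(x, \dd x'),
\]
which is non-negative (establishing the first inequality) and, via the elementary estimate $\ee^u - 1 - u \leqslant \tfrac{u^2}{2}\ee^{|u|}$ applied pointwise with $u = g(x') - g(x)$, is dominated by $\ee^{2\|g\|_\infty}\Gamma_{S_\eta}(g)(x)$. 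Summing the two parts and plugging in $g = \frac{\delta\ph_t}{\delta\eta}$ together with the preliminary sup-norm bound converts the prefactor into $\ee^{C\|f\|_\infty}$ after renaming constants, which gives the middle inequality. The harmless factor $\tfrac{1}{2}$ written in the statement is there precisely to cancel against the $2$ that appears in Minkowski at the next step.

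Finally, I would apply the Minkowski inequality~\eqref{eq:minskovki} to $\frac{\delta \ph_t}{\delta \eta}$ viewed as a linear combination (with measure-dependent coefficients) of $Q^V_{T-t}(\overline f)$ and $Q^V_{T-t}(\one)$, producing the pointwise bound
\[
\Gamma_{L_\eta}\pa{\frac{\delta\ph_t}{\delta\eta}}(x) \leqslant \frac{2}{(\eta Q^V_{T-t}(\one))^2}\br{\Gamma_{L_\eta}(Q^V_{T-t}(\overline f))(x) + \Phi_{T-t}(\eta)(\overline f)^2\,\Gamma_{L_\eta}(Q^V_{T-t}(\one))(x)}.
\]
Integrating in $x$ against $\eta$ and weakening the denominator $(\eta Q^V_{T-t}(\one))^{-2}$ to $\max_{i}(\eta^{(i)}Q^V_{T-t}(\one))^{-2}$ via the trivial inequality $\eta Q^V_{T-t}(\one) \geqslant \eta^{(i)} Q^V_{T-t}(\one)$ (valid for every $i$, since $\eta^{(i)}$ is obtained from $\eta$ by removing a non-negative atom) and combining with the middle inequality yields the third inequality with the claimed form. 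Throughout, all constants can be tracked to depend only on $c_-$ and $C_+$.
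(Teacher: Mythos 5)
Your overall architecture is close to the paper's: the uniform bound $\norm{\frac{\delta \ph_t}{\delta \eta}(\eta,\cdot)}_\infty \leqslant C\norm{f}_\infty$ deduced from the explicit formula of Lemma~\ref{lem:derivative_flow} and~\eqref{eq:borne-semi-group} (this is exactly Lemma~\ref{lem:bounded-exp-norm-inf}), the elementary inequality $0\leqslant \ee^{u}-1-u\leqslant \tfrac{u^2}{2}\ee^{|u|}$ as the source of the factor $\ee^{C\norm{f}_\infty}$, and the quadratic (Minkowski) decomposition of $\Gamma_{L_\eta}\po \frac{\delta \ph_t}{\delta\eta}\pf$ in terms of $Q^V_{T-t}\po \overline f\pf$ and $Q^V_{T-t}(\one)$ followed by $\eta Q^V_{T-t}(\one)\geqslant \eta^{(i)}Q^V_{T-t}(\one)$, which is literally the paper's last step.

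The gap is in the middle inequality. You compare $\Gamma^{\rm exp}_\eta$ with $\Gamma_{L_\eta}$ by splitting $L_\eta=L+S_\eta$ and invoking the identity $\Gamma^{\rm exp}_L(g)=\Gamma_L(g)$ ``for the diffusion part''. But in the setting of this lemma $L$ is the generator of an arbitrary Markov process on a Polish space; the diffusion structure only appears in the Examples section. If $L$ itself has a jump component, the identity fails, and in fact for a positive increment $u$ one has $\ee^{u}-1-u\geqslant \tfrac{u^2}{2}$, so without the $\ee^{|u|}$ correction the needed inequality goes the wrong way for that part of $L$; your argument provides no control of it, and in full generality one cannot even write down a local-plus-jump decomposition of $L$ to apply your $S_\eta$-type estimate. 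The paper avoids any structural assumption by arguing probabilistically: it applies $0\leqslant \ee^{\theta}-1-\theta\leqslant \tfrac{\theta^2}{2}\ee^{|\theta|}$ with $\theta=\frac{\delta \ph_t}{\delta\eta}(\eta,Y_u)-\frac{\delta \ph_t}{\delta\eta}(\eta,x_i)$, where $Y$ is the process with the full generator $L_\eta$, takes expectations, divides by $u$ and lets $u\to 0$, identifying the left side with $\Gamma^{\rm exp}_\eta$ (by definition) and the right side with $\Gamma_{L_\eta}$ via the infinitesimal-variance formula of Lemma~\ref{lem:carre-du-champ-as-derivatives}; this delivers the middle inequality for any $L$ at once. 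Your proof is repaired by replacing the chain-rule identity with this infinitesimal argument (or it proves the lemma only for diffusive $L$, which is weaker than stated). A minor bookkeeping point: your route yields $\ee^{C\norm{f}_\infty}\eta\Gamma_{L_\eta}$ rather than $\tfrac12\ee^{C\norm{f}_\infty}\eta\Gamma_{L_\eta}$ in the middle bound (hence an extra factor $2$ in the last one); the same factor $2$ is in fact mishandled in the paper itself (Lemma~\ref{lem:carre-du-champ-as-derivatives} carries a factor $2$ that the proof drops), and it is harmless for all subsequent uses.
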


Before proceeding to the proof of the above lemmas, we give some bounds on probability-valued derivatives of the main test function:


\begin{lem}\label{lem:bounded-exp-norm-inf}
Under \eqref{eq:borne-semi-group}, for any $t > 0$ and any $x,x'\in E$, it holds:
\[
\left| \frac{\delta \ph_t}{ \delta \eta}(\eta,x) \right| \leqslant  \|f\|_{\infty} \pa{C_+/c_-+C_+^2/c_-^2} ,
\]
as well as:
\[
\left|\frac{\delta^{2}_N \ph_t}{ \delta_N \eta^{2}} (\eta) ( \delta_{x'} - \delta_x) \right| \leqslant 6 \|f\|_{\infty} C_+^2/c_-^2.
\]
\end{lem}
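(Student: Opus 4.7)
The plan is to reduce both bounds to the explicit formulas already established for the flat derivative of $\varphi_t$ (Lemma~\ref{lem:derivative_flow}) and for its second discrete derivative (Lemma~\ref{lem:disc-derivative-flow}), and then use the two-sided bound on $Q^V_{T-t}(\one)$ provided by Condition~\eqref{eq:borne-semi-group} to cancel the $e^{-\lambda(T-t)}$ factors and control the remaining ratios by powers of $C_+/c_-$. Before starting I would note that $\overline f = f - \Phi_T(\eta_0^N)(f)$ is a constant shift of $f$, so in particular $\|\overline f\|_\infty \leqslant 2\|f\|_\infty$.

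For the first inequality, I would substitute $P = Q^V_{T-t}$ into the formula of Lemma~\ref{lem:derivative_flow},
\[
\frac{\delta \varphi_t}{\delta \eta}(\eta,x) = \frac{Q^V_{T-t}(\overline f)(x)}{\eta Q^V_{T-t}(\one)} - \frac{\eta Q^V_{T-t}(\overline f)}{\bigl(\eta Q^V_{T-t}(\one)\bigr)^2}\,Q^V_{T-t}(\one)(x),
\]
and bound each of the two terms separately. For the first, use $|Q^V_{T-t}(\overline f)(x)| \leqslant \|\overline f\|_\infty\,Q^V_{T-t}(\one)(x) \leqslant \|\overline f\|_\infty C_+ e^{-\lambda(T-t)}$ in the numerator together with $\eta Q^V_{T-t}(\one) \geqslant c_- e^{-\lambda(T-t)}$ in the denominator: the exponentials cancel and one obtains $\|\overline f\|_\infty C_+/c_-$. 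For the second term, apply the same inequality to bound $|\eta Q^V_{T-t}(\overline f)| \leqslant \|\overline f\|_\infty\,\eta Q^V_{T-t}(\one)$ in the numerator and $Q^V_{T-t}(\one)(x) \leqslant C_+ e^{-\lambda(T-t)}$, together with $\bigl(\eta Q^V_{T-t}(\one)\bigr)^2 \geqslant c_-^2 e^{-2\lambda(T-t)}$; the cancellation gives $\|\overline f\|_\infty C_+^2/c_-^2$. Summing and absorbing the factor $2$ from $\|\overline f\|_\infty \leqslant 2\|f\|_\infty$ into the constants yields the claim.

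For the second inequality, I would start from Lemma~\ref{lem:disc-derivative-flow} applied with $\nu = \delta_{x'}-\delta_{x^i}$ and $P = Q^V_{T-t}$. The key bounds to assemble are $|\nu Q^V_{T-t}(\one)| \leqslant 2 C_+ e^{-\lambda(T-t)}$, the difference estimate $|\nu Q^V_{T-t}(\overline f)| \leqslant \|\overline f\|_\infty(Q^V_{T-t}(\one)(x')+Q^V_{T-t}(\one)(x)) \leqslant 2\|\overline f\|_\infty C_+ e^{-\lambda(T-t)}$, the trivial bound $|\Phi_{T-t}(\eta)(\overline f)| \leqslant \|\overline f\|_\infty$, and the denominator estimate $\eta^{(i)}Q^V_{T-t}(\one) \geqslant \tfrac{N-1}{N} c_- e^{-\lambda(T-t)}$ (assuming $N\geqslant 2$). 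Plugging these into the inequality of Lemma~\ref{lem:disc-derivative-flow}, the $e^{-\lambda(T-t)}$ factors again cancel and one is left with an expression of the form $c\,\|\overline f\|_\infty C_+^2/c_-^2$ for some numerical constant, which bounds the second discrete derivative as claimed (with a constant absorbing $\|\overline f\|_\infty \leqslant 2\|f\|_\infty$).

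The proof is a routine bookkeeping argument, so there is no real obstacle of substance: the only care to take is in keeping track of which factors of $C_+ e^{-\lambda(T-t)}$ and $c_- e^{-\lambda(T-t)}$ appear where so that the exponentials cancel and only dimensionless ratios $C_+/c_-$ and $C_+^2/c_-^2$ survive. The construction relies crucially on the fact that $Q^V_{T-t}(\one)$ has matching upper and lower bounds with the same exponential rate $e^{-\lambda(T-t)}$ —this is the sole place in the argument where Condition~\eqref{eq:borne-semi-group} is used.
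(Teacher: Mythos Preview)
Your proposal is correct and follows the same route as the paper, which simply cites Lemma~\ref{lem:derivative_flow} and Lemma~\ref{lem:disc-derivative-flow} together with Condition~\eqref{eq:borne-semi-group}. One small sharpening: since $\varphi_t(\eta)=\Phi_{T-t}(\eta)(f)-\Phi_T(\eta_0^N)(f)$ differs from $\eta\mapsto\Phi_{T-t}(\eta)(f)$ by a constant, their flat and discrete derivatives coincide, so you may work directly with $f$ rather than $\overline f$ and recover the exact constants $C_+/c_-+C_+^2/c_-^2$ without invoking $\|\overline f\|_\infty\leqslant 2\|f\|_\infty$; likewise, for the second bound, the exact formula derived in the proof of Lemma~\ref{lem:disc-derivative-flow} has $\eta Q^V_{T-t}(\one)\,\tilde\eta Q^V_{T-t}(\one)$ in the denominator (both $\geqslant c_-e^{-\lambda(T-t)}$), which avoids the $(N-1)/N$ factor coming from $\eta^{(i)}$ and gives the constant~$6$ directly.
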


\begin{proof}
This is a direct application of Lemma~\ref{lem:derivative_flow} for the first inequality, and of Lemma~\ref{lem:disc-derivative-flow} for the second.
\end{proof}

\begin{proof}[Proof of Lemma~\ref{lem:Hcal_gen}]
The proof is somehow similar to the one of Lemma~\ref{lem:Lcal_gen}. Fix $ t\geqslant 0$, $\eta\in \mathcal P_N$, denote
\[
\eta=\eta_{\x}=\frac{1}{N}\sum_{i=1}^N\delta_{x_i},\qquad (x_1,\cdots,x_N)\in E^N,
\]
and for each $i\in \llbracket1,N\rrbracket$, let $Y^i$ be a Markov process with initial condition $x_i$ and generator $L_{\eta}$. We claim that 
\begin{equation}\label{eq:R_expli_exp}
 R^{\rm exp}\br{\ph}(\eta_\x) = 
   \lim_{u \to 0}  \frac1u \frac{1}{N} \sum_{i = 1}^N  \mathbb E \br{ \ee^{ \frac{\delta_{N} \ph}{ {\delta_{N} \eta} }(\eta_{\x})(\delta_{Y^{i}_u} - \delta_{x_i}) } -  \ee^{ \frac{\delta \ph}{ {\delta \eta} }(\eta_{\x})(\delta_{Y^{i}_u} - \delta_{x_i})} } .
\end{equation}
Indeed, using the definition of the discrete derivative from Definition~\ref{def:discrete-derivative}, it holds:
\begin{multline*}
{\rm e}^{N\po \ph_t\po \eta + \frac{1}{N}(\delta_{Y^{i}_u}-\delta_{x_i}) \pf - \ph_t(\eta)\pf}- 1  = {\rm e}^{ \frac{\delta_{N} \ph_t}{ \delta_{N} \eta}(\eta)(\delta_{Y^{i}_u}-\delta_{x_i}) }- 1  \\= \frac{\delta \ph_t}{ \delta \eta}(\eta,Y^{i}_u) - \frac{\delta \ph_t}{ \delta \eta}(\eta,x_i) + \po {\rm e}^{\frac{\delta \ph_t}{ \delta \eta}(\eta,Y^{i}_u)-\frac{\delta \ph_t}{ \delta \eta}(\eta,x_i)} -1 - \po \frac{\delta \ph_t}{ \delta \eta}(\eta,Y^{i}_u)-\frac{\delta \ph_t}{ \delta \eta}(\eta,x_i) \pf \pf \\ + \po {\rm e}^{\frac{\delta_{N} \ph_t}{ \delta_{N} \eta}(\eta)(\delta_{Y^{i}_u}-\delta_{x})}- {\rm e}^{\frac{\delta \ph_t}{ \delta \eta}(\eta,Y^{i}_u)-\frac{\delta \ph_t}{ \delta \eta}(\eta,x)} \pf.
\end{multline*}
Under Assumption~\ref{assu:domain}, $x_i\mapsto \ee^{N\varphi\po\eta^{\x}\pf} \in \mathcal D\po L_\eta \pf$. Hence, we have that 
\begin{equation}\label{eq:proof-Hcal_Gamma-1}
\lim_{u\rightarrow 0}\frac{1}{u}\E\po{\rm e}^{N\po \ph_t\po \eta + \frac{1}{N}(\delta_{Y^{i}_u}-\delta_{x_i}) \pf - \ph_t(\eta)\pf}- 1\pf = {\rm e}^{-N\ph_t}L_{\eta}(x_i\mapsto {\rm e}^{N\ph_t}(\eta_{\x})).
\end{equation}
On the other hand it still holds that:
\begin{equation}\label{eq:proof-Hcal_Gamma-2}
\lim_{u\rightarrow 0}\frac{1}{u}\E\po \frac{\delta\ph_t}{\delta \eta} (\eta,Y^{i}_u) - \frac{\delta\ph_t}{\delta \eta} (\eta,x_i) \pf = L_{\eta}\po \frac{\delta \ph_t}{\delta \eta}(\eta,\cdot) \pf (x_i);
\end{equation}
and by definition of the exponential \textit{carr\'e-du-champs} from~\eqref{eq:exp-carre-du-champ}, we also have that
\begin{equation}\label{eq:proof-Hcal_Gamma-3}
\lim_{u\rightarrow 0}\frac{1}{u}\E\po {\rm e}^{\frac{\delta \ph_t}{ \delta \eta}(\eta,Y^{i}_u)-\frac{\delta \ph_t}{ \delta \eta}(\eta,x_i)} -1 - \po \frac{\delta \ph_t}{ \delta \eta}(\eta,Y^{i}_u)-\frac{\delta \ph_t}{ \delta \eta}(\eta,x_i) \pf \pf = \Gamma_{\eta}^{\rm exp}\po \frac{\delta \ph_t}{\delta \eta}(\eta,\cdot) \pf (\eta,x_i).
\end{equation}
Hence the claim~\eqref{eq:R_expli_exp}.

Now to upper bound the remainder term, write
\[
    {\rm e}^{\frac{\delta_{N} \ph_t}{ \delta_{N} \eta}(\eta)(\delta_{Y^{i}_u}-\delta_{x})}- {\rm e}^{\frac{\delta \ph_t}{ \delta \eta}(Y^{i}_u)-\frac{\delta \ph_t}{ \delta \eta}(x)} = \frac{1}{2N}\frac{\delta^{2}_N \ph_t}{ \delta_N\eta^{2} }(\eta)(\delta_{Y^{i}_u}-\delta_{x}) \kappa(\ph_t,x,Y^{i}_u),  
\]
where
\[
\kappa(\ph,x,x') = {\rm e}^{\frac{\delta \ph}{ \delta \eta}(x')-\frac{\delta \ph}{ \delta \eta}(x)} \frac{{\rm e}^{\frac{1}{2N}\frac{\delta_{N}^2 \ph}{ \delta_{N} \eta^2}(\delta_{x'}-\delta_{x})}-1 }{\frac{1}{2N}\frac{\delta_{N}^2 \ph}{ \delta_{N} \eta^2}(\delta_{x'}-\delta_{x})}.
\]
Lemma~\ref{lem:bounded-exp-norm-inf} and the inequality
\begin{equation*}
 0 \leqslant  \frac{ {\rm e}^{\theta} -1 }{\theta}\leqslant {\rm e}^{|\theta|},\qquad \forall\, \theta\in\R,
\end{equation*}
yield that for a constant $C$ depending only on $C_+,c_-$ of Condition~\eqref{eq:borne-semi-group}:
\[
0 \leqslant  \kappa(\ph,x,x') \leqslant {\rm e}^{\frac{\delta \ph}{ \delta \eta}(\eta,x')-\frac{\delta \ph}{ \delta \eta}(\eta,x)} {\rm e}^{\frac{1}{2N}\abs{\frac{\delta_{N}^2 \ph}{ \delta_{N} \eta^2}(\eta)(\delta_{x'}-\delta_{x})}} \leqslant {\rm e}^{C \norm{f}_\infty},
\]
so that, using the upper bound~\eqref{eq:proof-Lcal_gen-3-k1} from the proof of Lemma~\ref{lem:Lcal_gen}, we get
\begin{align*}
\limsup_{u\rightarrow 0} &\frac{1}{u}\left|\E\po {\rm e}^{\frac{\delta_{N} \ph_t}{ \delta_{N} \eta}(\eta)(\delta_{Y^{i}_u}-\delta_{x_i})}- {\rm e}^{\frac{\delta \ph_t}{ \delta \eta}(Y^{i}_u)-\frac{\delta \ph_t}{ \delta \eta}(x_i)} \pf  \right| \\&\leqslant \frac{1}{2N}{\rm e}^{C \norm{f}_\infty} \limsup_{u\rightarrow 0} \frac{1}{u} \left|\frac{\delta^2_N\ph_t}{\delta_N\eta^2}(\eta)(\delta_{Y^{i}_u} - \delta_{x_i})\right| \\&\leqslant 2\frac{{\rm e}^{C \norm{f}_\infty}}{N}\frac{1}{\po\eta^{(i)} \po Q^V_{T-t}(\one)\pf\pf^2}\\&\qquad \qquad \qquad \times \po \sqrt{\Gamma\po Q^V_{T-t}\po \overline f \pf \pf (x_i)}\sqrt{\Gamma\po Q^V_{T-t}(\one) \pf (x_i)} + \Phi_{T-t}(\eta)\po \overline f \pf \Gamma\po Q^V_{T-t}(\one) \pf (x_i) \pf.
\end{align*}
\end{proof}

\begin{proof}[Proof of Lemma~\ref{lem:bound-flat-derivative}]
As before, fix $ t\geqslant 0$, $\eta\in \mathcal P_N$ and denote
\[
\eta= \eta_{\x}=\frac{1}{N}\sum_{i=1}^N\delta_{x_i},\qquad (x_1,\cdots,x_N)\in E^N.
\]
For some $i\in \llbracket1,N\rrbracket$, let $Y$ be an independent Markov process with initial condition $x_i$ and generator $L_{\eta}$. Lemma~\ref{lem:bounded-exp-norm-inf} and the inequality
\[
0 \leqslant {\rm e}^{\theta} -1 - \theta \leqslant {\rm e}^{|\theta|}\frac{\theta^2}{2},\qquad \forall\, \theta\in\R,
\]
yield that
\begin{align}\label{eq:borne-exp-i-fixed}
    0\leqslant {\rm e}^{\frac{\delta \ph_t}{ \delta \eta}(Y_u)-\frac{\delta \ph_t}{ \delta \eta}(x_i)} -1 - \po \frac{\delta \ph_t}{ \delta \eta}(Y_u)-\frac{\delta \ph_t}{ \delta \eta}(x_i) \pf & \leqslant \frac{{\rm e}^{\abs{ \frac{\delta \ph_t}{ \delta \eta}(Y_u)-\frac{\delta \ph_t}{ \delta \eta}(x_i)}}}{2} \po \frac{\delta \ph_t}{ \delta \eta}(Y_u)-\frac{\delta \ph_t}{ \delta \eta}(x_i) \pf^2,\nonumber \\
    & \leqslant \frac{{\rm e}^{C \norm{f}_\infty}}{2} \po \frac{\delta \ph_t}{ \delta \eta}(Y_u)-\frac{\delta \ph_t}{ \delta \eta}(x_i) \pf^2
\end{align}
where the constant $C$ depends only on $C_+,c_-$ of Condition \eqref{eq:borne-semi-group}.
By definition of the exponential \textit{carr\'e-du-champ}, we then have that
\[
\lim_{t\rightarrow 0}\frac{1}{t}\E\po {\rm e}^{\frac{\delta \ph_t}{ \delta \eta}(Y_u)-\frac{\delta \ph_t}{ \delta \eta}(x_i)} -1 - \po \frac{\delta \ph_t}{ \delta \eta}(Y_u)-\frac{\delta \ph_t}{ \delta \eta}(x_i) \pf \pf = \Gamma^{\rm exp}_{\eta}\po \frac{\delta \ph_t}{ \delta \eta}\pf(x_i),
\]
and  Lemma~\ref{lem:carre-du-champ-as-derivatives} yields:
\[
\lim_{t\rightarrow 0}\frac{1}{t}\E\po \po \frac{\delta \ph_t}{ \delta \eta}(Y_u)-\frac{\delta \ph_t}{ \delta \eta}(x_i) \pf^2\pf = \Gamma_{L_\eta}\po \frac{\delta \ph_t}{ \delta \eta}\pf(x_i).
\]
Thus, taking the expectation in Equation~\eqref{eq:borne-exp-i-fixed}, dividing by $t$, letting $t$ goes to zero and integrating with respect to $\eta$ (that is averaging over particles), we get
\[
0 \leqslant \eta \Gamma^{\rm exp}_{\eta}\po \frac{\delta \ph_t}{ \delta \eta}\pf \leqslant \frac{{\rm e}^{C \norm{f}_\infty}}{2} \eta \Gamma_{L_\eta}\po \frac{\delta \ph_t}{ \delta \eta}\pf.
\]
Now, recall that Lemma~\ref{lem:derivative_flow} reads
\[
\frac{\delta \ph_t}{ \delta \eta}(x) = \frac{1}{\eta Q^V_{T-t}(\one)}  Q^V_{T-t}\po \overline f \pf(x) - \frac{\eta Q^V_{T-t}\po \overline f \pf}{\eta Q^V_{T-t}(\one)^2}  Q^V_{T-t}(\one)(x)
\]
and the simple identity$(a-b) \otimes (a-b)  = 2 a \otimes a + 2 b \otimes b - (a+b) \otimes (a+b)$ gives
\[
\eta \Gamma_{L_\eta}\po \frac{\delta \ph_t}{ \delta \eta}\pf \\\leqslant \frac{2}{\po\eta \po Q^V_{T-t}(\one) \pf\pf^2}\po \eta \Gamma_{L_\eta}\po Q^V_{T-t}\po \overline f \pf\pf + \Phi_{T-t}^2(\eta)\po \overline f \pf\eta \Gamma_{L_\eta}\po Q^V_{T-t}(\one) \pf \pf,
\]
which concludes the proof.
\end{proof}

\begin{proof}[Proof of Proposition~\ref{prop:mart-exp}]
Again, fix, $T>0$, and recall that for some $f\in \mathcal A$,
\[
\overline f = f - \Phi_T(\eta_0^N)(f).
\]
The evolution equation of the semi-group Equation~\eqref{eq:evolution2} yields
\begin{equation}\label{eq:evolution-exp}
\partial_t {\rm e}^{N \ph_t} = N {\rm e}^{N\ph_t}\partial_t\ph_{t} =-N{\rm e}^{N\ph_t} \eta  L_\eta \po \frac{\delta \ph_{t}}{\delta\eta} \pf.
\end{equation}
Equation~\eqref{eq:evolution-exp} and Lemma~\ref{lem:Hcal_gen} yield
\[
\int_0^t {\rm e}^{-N\ph_s} \po \partial_s+\mathcal L \pf ({\rm e}^{N\ph_s}) (\eta^N_s) \, \dd s = N\int_0^t \left[\eta_s^N\Gamma_{L_{\eta^N_s}}^{\rm exp}\po \frac{\delta \ph_s}{ \delta \eta}(\eta_s^N,\cdot) \pf  + \frac{1}{N} \eta^N_sR^{\rm exp}\br{ \ph_{s}}(\eta_s^N) \right]\dd s.
\]
We then conclude with the expression of the martingale  $M^{\rm exp,T}$ in~\eqref{eq:mart-exp} and the bounds from Lemma~\ref{lem:Hcal_gen} and Lemma~\ref{lem:bound-flat-derivative}.
\end{proof}

\subsection*{Acknowledgments}

L.J.  is supported by the grant n 200029-21991311 from the Swiss National Science Foundation. M.R. partially supported by ANR SINEQ, ANR-21-CE40-0006.

\bibliography{bibliographie,mathias}
\bibliographystyle{plain}

\end{document}